\documentclass[12 pt]{amsart}

\usepackage[letterpaper,top=2cm,bottom=2cm,left=3cm,right=3cm,marginparwidth=1.75cm]{geometry}
\usepackage{graphicx} 
\usepackage{amssymb,amsthm,amsmath,eucal,latexsym,tikz}
\usepackage{color,caption,subcaption,booktabs,adjustbox,comment}
\usepackage{xcolor,verbatim}

\newtheorem{theorem}{Theorem}[section]
\newtheorem{corollary}[theorem]{Corollary}

\newtheorem{lemma}[theorem]{Lemma}

\DeclareMathOperator{\CL}{CL}
\DeclareMathOperator{\diam}{diam}

\newcommand{\vA}{{u}} 
\newcommand{\vB}{{v}} 
\newcommand{\vC}{{s}} 
\newcommand{\mA}{{b}} 
\newcommand{\mB}{{c}} 
\newcommand{\mC}{{e}} 
\newcommand{\cv}{{V}} 
\newcommand{\numsteps}{{W}}
\newcommand{\mBox}{
\mathop{\vcenter{\hbox{\scalebox{1.2}{$\square$}}}}\nolimits}

\definecolor{trentviolet}{RGB}{148,0,211}

\title{Cooling graph products}
\author[A.\ Bonato]{Anthony Bonato}
\author[M.\ Carr]{MacKenzie Carr}
\author[C.\ Jones]{Caleb Jones}
\author[T.G.\ Marbach]{Trent G.\ Marbach}
\author[T.\ Mishura]{Teddy Mishura}

\dedicatory{In memory of Robert Woodrow}

\address[A1,A2,A3,A5]{Toronto Metropolitan University, Toronto, Canada}
\address[A4]{Acadia University, Wolfville, Canada}

\email[A1]{(A1) abonato@torontomu.ca}
\email[A2]{(A2) mackenzie.carr@torontomu.ca}
\email[A3]{(A3) caleb.w.jones@torontomu.ca}
\email[A4]{(A4) trent.marbach@acadiau.ca}
\email[A5]{(A5) tmishura@torontomu.ca}

\keywords{graphs, cooling, burning, graph products, Cartesian product, strong product, lexicographic product,
direct product, hypercubes}
\subjclass{}

\begin{document}

\begin{abstract} 

The cooling number measures the speed at which a slow-moving influence or contagion spreads on a graph. In this paper, we investigate the cooling number of four classical graph products: the Cartesian product, the strong product, the lexicographic product, and the direct product. We also determine the cooling number of a disconnected graph in terms of the cooling numbers of its components. We conclude with open problems. 

\end{abstract}

\maketitle

\section{Introduction}

Graph burning is a simplified model for the spread of influence or contagion in a network. The burning number of a graph, introduced in \cite{BJR0,BJR}, quantifies the speed at which the influence spreads to every vertex in the fastest case. Given a graph $G$, the burning process on $G$ is a discrete-time process that proceeds in rounds. Throughout the process, vertices are \emph{burned} or \emph{unburned}. At the beginning of the first round, all vertices are unburned. In each round, all unburned vertices that have a burned neighbor become burned (the \emph{propagation} step), and then one new unburned vertex (if available) is chosen to burn (the \emph{manual} step); the latter vertices are called \emph{sources}. The sequence of sources chosen during the process is called a \emph{burning sequence}. The \emph{burning number} of $G,$ written $b(G),$ is the minimum number of rounds needed until all vertices are burned. Burning has since been extensively studied in papers such as \cite{Burning_Hard,bessy2,bonf,bgs,bk,bl,bill,rj,kam,ko,LL,liu1,MitschePralatRoshanbin,prod}. For further background on graph burning, see the survey \cite{survey} and the book \cite{bbook}.

A recent variant of burning called \emph{cooling} was introduced in \cite{cooling} as a measure of the speed of the slowest case spread of a contagion or influence in a network. Cooling is defined analogously to burning, except that the \emph{cooling number} of a graph $G$, written $\CL(G),$ is the maximum number of rounds needed until all vertices are cooled. Throughout the process, vertices are \emph{cooled} or \emph{uncooled}. Note that \emph{cooling sequences} are defined analogously to burning sequences; in particular, they are defined as the sequence of vertices that are chosen as sources. We say that a cooling sequence for a graph $G$ is \emph{optimal} if it realizes the cooling number of $G$. Observe that finding a sequence of sources in $G$ that lasts $k$ rounds yields $b(G)\leq k\leq \CL(G)$. In particular, finding a specific burning sequence yields an \emph{upper bound} on $b(G)$, whereas finding a specific cooling sequence yields a \emph{lower bound} on $\CL(G)$. Cooling has subsequently been studied in the burning game in \cite{bill}, which mixes cooling and burning, and in liminal burning, which provides burning-type parameters that interpolate between the cooling and burning numbers \cite{liminal}.

In this paper, we study cooling on the following graph products: the Cartesian, strong, lexicographic, and direct products (also called the tensor product or categorical product). While such products have been extensively studied (see the book \cite{prod11} for further background), we define them here for completeness. Given graphs $G$ and $H$ (the \emph{factors} of the product), the vertex set of each product is $V(G) \times V(H).$ The edge set for each graph product, along with its notation, is presented in the following table. See Figure~\ref{fig:INTRO:fig1} for visualizations of the four products of $P_3$ with itself.\\

\begin{center}
\renewcommand{\arraystretch}{1.5}
\begin{tabular}{|l|c|p{10cm}|}
\hline
Cartesian & \(G \square H\) &
\(E(G \square H)=\{(u,x)(v,y)\in V\times V : u=v,\ xy\in E(H)\}\cup\{(u,x)(v,y)\in V\times V : x=y,\ uv\in E(G)\}.\) \\
\hline
direct & \(G \times H\) &
\(E(G \times H)=\{(u,x)(v,y)\in V\times V : uv\in E(G),\ xy\in E(H)\}.\) \\
\hline
strong & \(G \boxtimes H\) &
\(E(G \boxtimes H)=E(G \square H)\cup E(G \times H).\) \\
\hline
lexicographic & \(G \circ H\) &
\(E(G \circ H)=\{(u,x)(v,y)\in V\times V : uv\in E(G)\}\cup\{(u,x)(v,y)\in V\times V : u=v,\ xy\in E(H)\}.\) \\
\hline
\end{tabular}
\end{center}

\begin{figure}[ht]
\centering

\begin{tikzpicture}[
    scale=0.75,
    every node/.style={circle, fill=black, inner sep=2.2pt},
    every path/.style={draw=black, line width=0.6pt}
]

\newcommand{\gridnodes}[1]{
    \foreach \i in {0,1,2}
        \foreach \j in {0,1,2}
            \node (#1-\i-\j) at (\i,\j) {};
}

\newcommand{\cartesianedges}[1]{
    \foreach \j in {0,1,2} {
        \draw (#1-0-\j)--(#1-1-\j);
        \draw (#1-1-\j)--(#1-2-\j);
    }
    \foreach \i in {0,1,2} {
        \draw (#1-\i-0)--(#1-\i-1);
        \draw (#1-\i-1)--(#1-\i-2);
    }
}
\begin{scope}[shift={(0,0)}]
\gridnodes{C}
\cartesianedges{C}
\node[fill=none] at (1,-0.8) {$P_3 \square P_3$};
\end{scope}

\begin{scope}[shift={(4,0)}]
\gridnodes{S}
\cartesianedges{S}
\foreach \i in {0,1}
    \foreach \j in {0,1} {
        \draw (S-\i-\j)--(S-\the\numexpr\i+1\relax-\the\numexpr\j+1\relax);
        \draw (S-\i-\the\numexpr\j+1\relax)--(S-\the\numexpr\i+1\relax-\j);
    }
\node[fill=none] at (1,-0.8) {$P_3 \boxtimes P_3$};
\end{scope}

\begin{scope}[shift={(8,0)}]
\gridnodes{L}
\foreach \i in {0,1,2} {
    \draw (L-\i-0)--(L-\i-1);
    \draw (L-\i-1)--(L-\i-2);
}
\foreach \i/\k in {0/1,1/2}
    \foreach \j in {0,1,2}
        \foreach \ell in {0,1,2}
            \draw (L-\i-\j)--(L-\k-\ell);

\node[fill=none] at (1,-0.8) {$P_3 \circ P_3$};
\end{scope}

\begin{scope}[shift={(12,0)}]
\gridnodes{D}
\foreach \i in {0,1}
    \foreach \j in {0,1} {
        \draw (D-\i-\j)--(D-\the\numexpr\i+1\relax-\the\numexpr\j+1\relax);
        \draw (D-\i-\the\numexpr\j+1\relax)--(D-\the\numexpr\i+1\relax-\j);
    }
\node[fill=none] at (1,-0.8) {$P_3 \times P_3$};
\end{scope}
\end{tikzpicture}
\caption{The four main graph products with both factors equaling $P_3$.}\label{fig:INTRO:fig1}
\end{figure}

Burning Cartesian, strong, and lexicographic products of graphs was first considered in \cite{MitschePralatRoshanbin,prod}. Interestingly, determining the burning number of $m \times n$ Cartesian grids remains an open problem, with partial results given in \cite{bonf,MitschePralatRoshanbin} for so-called fence graphs. Cooling square strong grids was fully solved in \cite{ambrose}, and we generalize this result to rectangular strong grids in Theorem~\ref{thm:STRONG:strong_grid_result}. To the best of our knowledge, no study has considered the burning number of direct products.

Our present focus is on establishing bounds and exact values of the cooling numbers of the four main graph products. While cooling is somewhat analogous to burning, in the latter process we often exploit decompositions such as rooted tree partitions to bound the burning number; by contrast, no such decompositions exist in general for cooling, so other techniques must be employed. 

The paper is organized as follows. 
In Section~2, we begin by collecting elementary but useful bounds on the cooling number of graphs, and introduce the notion of maximally cool graphs.
In Section~3, we study the Cartesian product of graphs, focusing on products of trees and complete graphs. We give general lower bounds on the cooling number of $G\square H$ and identify graphs whose product has cooling number as large as possible with respect to their diameter. 
In Section~4, we obtain upper and lower bounds for the cooling number of strong product of general graphs. Using these bounds, we completely settle the cooling number of square strong grids and obtain strong bounds on the cooling number of rectangular strong grids.
In Section~5, we show that cooling behaves differently from burning on the lexicographic products of graphs compared to the other types of products. We obtain an upper bound for the cooling number of the product in terms of the cooling number of its factors that differs drastically from that of the burning number.
In Section~6, we show that it is impossible to create an upper bound for the cooling number of the direct product of general graphs in terms of the cooling numbers of its factors. We also show that cooling a disconnected graph reduces to optimally cooling each of its components in sequence.
We finish with open problems. Throughout the paper, we include computational results obtained using code available at \texttt{https://github.com/calebwjones/graph\_cooling\_code.git}. 

We consider only simple, finite, and undirected graphs. All graphs are assumed to be connected unless otherwise indicated. For a positive integer $n$, we let $[n] = \{ 1, 2, \ldots ,n\}$. Unless otherwise stated, variables $k$, $m,$ and $n$ refer to integers. In a graph $G$, we denote the distance between vertices $u$ and $v$ by $d_G(u,v)$, writing $d(u,v)$ if $G$ is clear from context. The \emph{diameter} of $G$ is $\diam(G)=\max_{u,v \in V(G)}d(u,v).$ The \emph{complete graph}, the \emph{path graph} and the \emph{cycle graph}, each of order $n$, are denoted by $K_n$, $P_n$ and $C_n$, respectively. The graph with no edges of order $n$ is denoted by $\overline{K}_n$. \emph{Caterpillars} are trees consisting of a path $P$, called the \emph{spine}, with any number of leaves (that is, vertices of degree $1$), adjacent to $P$. We refer to the non-spine leaves of a caterpillar as its \emph{legs}. \emph{Centipedes} are caterpillars with maximum degree 3. A centipede is \emph{full} if every non-leaf vertex has degree exactly 3, also called a complete caterpillar in \cite{cooling}. For background on graph theory, see \cite{west}. For additional background on burning and other pursuit-evasion processes and games, see \cite{bbook}.

\section{Preliminary results on cooling}

We begin with the basic bounds on the cooling number of $G$ in terms of its diameter that were established in \cite{cooling}.  

\begin{theorem}[{\cite[Theorem 2]{cooling}}]
\label{thm:GENE:diam_bounds}
If $G$ is a graph, then $$\left\lceil\frac{\diam(G)+2}{2}\right\rceil\leq \CL(G)\leq \diam(G)+1.$$
\end{theorem}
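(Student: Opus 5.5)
We prove the two inequalities separately. Both rest on one observation about the process: a source chosen in round $i$ has cooled exactly the vertices within distance $j-i$ of it by the end of round $j$. Hence every vertex is cooled at the end of round $j$ if the first source $v_1$ satisfies $j-1 \ge \max_w d(v_1,w)$. Also, the process cannot end while some vertex is still uncooled, since in each round a new source is chosen if one is available.

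For the upper bound, consider any cooling sequence with first source $v_1$. Every vertex $w$ satisfies $d(v_1,w)\le \diam(G)$. By the end of round $\diam(G)+1$, the ball of radius $\diam(G)$ around $v_1$ is cooled by propagation alone, and this ball is all of $V(G)$. So every cooling sequence finishes within $\diam(G)+1$ rounds. Additional sources can only speed things up, and the process stops once all vertices are cooled, so $\CL(G)\le \diam(G)+1$.

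For the lower bound, we exhibit a cooling sequence that lasts long enough. Let $d=\diam(G)$ and choose vertices $x,y$ with $d(x,y)=d$. Take $v_1=x$ and, in every later round, choose any still-uncooled vertex as the source. Suppose the process ends after $k$ rounds. Then the sources $v_1,\dots,v_k$ satisfy $d(v_i,y)\le k-i$ for some $i$; consider the source that cools $y$. We need to use that the sources are chosen adversarially, and I see no way to control every $v_i$ with $i\ge 2$. So I simplify and bound $k$ by counting only $x$ and $y$.

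The clean argument is as follows. Vertex $y$ becomes cooled in some round $k'\le k$, and $y$ is either a source or reached by propagation. We instead choose $v_2=y$, the opposite endpoint of a diametral path, which is legal because $y$ is uncooled after round 1 when $d\ge 1$. Consider a shortest $x$–$y$ path $P$ of length $d$. Any vertex $w$ on $P$ cooled by $x$ by round $j$ has $d(x,w)\le j-1$, and one cooled by $y$ has $d(y,w)\le j-2$. So after round $j$, at most $(j-1)+(j-2)+2=2j-1$ of the $d+1$ vertices on $P$ are cooled by these two sources. Later sources $v_3,\dots$ each cover a subpath of $P$ of length at most $2(j-i)$, since their balls meet a geodesic in a subpath.

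This is the main obstacle. Other sources could cover $P$ faster, so the later sources must also be chosen carefully. My choice is to take every later source on $P$ itself, adjacent to the cooled region, or to take them in a far-away part of $G$ when such vertices exist. If all of $G$ is cooled early, I cannot force this. The cleanest remedy is to select each $v_i$ for $i\ge3$ adjacent to an already cooled vertex, which adds essentially nothing to the coverage. Then by round $j$ the cooled part of $P$ has at most about $2j-1$ vertices, plus at most one extra vertex per later source, and these extras also propagate. To handle this I would instead choose each later source to be an uncooled neighbour of a cooled vertex, but off $P$ when possible. Failing that, I would simply accept the weaker count that $P$ has $d+1$ vertices and sources at both ends cool at most two new vertices of $P$ per round. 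This gives $2(k-1)\ge d$, that is, $k\ge \lceil (d+2)/2\rceil$. The last count is the one I would formalize: each round, the cooled set on $P$ grows by at most 2 at the two frontier points, provided later sources are chosen at the frontier.
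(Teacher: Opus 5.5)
Your upper bound is correct. The paper does not reprove this theorem; it quotes it from the original cooling paper.

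The lower bound has a genuine gap. The strategy you finally propose to formalize ($v_1=x$, $v_2=y$, later sources at the frontier) does not last $\lceil (d+2)/2\rceil$ rounds, and your claim that the cooled part of $P$ grows by at most $2$ per round is false for it. With both ends of $P$ seeded, propagation alone already adds two vertices of $P$ per round, one at each front. But a source is mandatory in every round in which some vertex survives propagation. When that source lies on $P$ (forced if $G=P$), it adds a third vertex that round and pushes a front further. Concretely, take $G=P_{11}$ on $0,1,\dots,10$, so $d=10$. Your rule gives, for instance, the sources $0,10,3,5$, and every vertex is cooled in the propagation step of round $5$, whereas the bound requires $6$ rounds. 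In fact no continuation of $0,10$ lasts $6$ rounds, so the choice $v_2=y$ itself is what goes wrong. Also, $v_2=y$ is only legal when $d\ge 2$, since $y$ must survive the propagation step of round $2$, not just round $1$.

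The repair is to keep a single front, which your remark about choosing sources adjacent to the cooled region was reaching for. Drop $y$ and take $v_1=x$. In each round $j\ge 2$, take $v_j$ to be an uncooled vertex adjacent to $N[S_{j-1}]$, the set cooled after the propagation step, where $S_{j-1}$ is the cooled set at the end of round $j-1$. Such a vertex exists by connectivity whenever $N[S_{j-1}]\neq V(G)$. Then $S_j\subseteq N_2[S_{j-1}]$, so by induction $S_j\subseteq N_{2(j-1)}[x]$; this also holds in a final round where no source is chosen. Since $d(x,y)=d$, the vertex $y$ is still uncooled at the end of any round $j$ with $2(j-1)<d$, so this process lasts at least $\lceil (d+2)/2\rceil$ rounds. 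Working with balls around $x$ rather than counting vertices of $P$ also handles sources chosen off $P$. Such sources can meet $P$ two steps ahead of the front, and your count does not control them.
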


Note that these bounds lead to an immediate corollary bounding the diameter in terms of the cooling number. This will later be used to prove Theorem \ref{thm:LEX:circ_general}.

\begin{corollary}
\label{cor:GENE:another_diam_bound}
If $G$ is a graph, then $\mathrm{CL}(G) -1 \leq \diam(G)\leq 2\CL(G)-2$.
\end{corollary}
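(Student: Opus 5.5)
The corollary follows by rearranging the two inequalities of Theorem~\ref{thm:GENE:diam_bounds}; no new combinatorial input is needed. Write $d=\diam(G)$ and $c=\CL(G)$.

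For the lower bound on $d$, we use the upper inequality $c\leq d+1$ of Theorem~\ref{thm:GENE:diam_bounds}. Subtracting $1$ from both sides gives $c-1\leq d$, which is exactly $\CL(G)-1\leq\diam(G)$.

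For the upper bound on $d$, we use the lower inequality $\lceil (d+2)/2\rceil\leq c$. We drop the ceiling using $x\leq\lceil x\rceil$ for every real $x$, which yields $(d+2)/2\leq c$. Multiplying by $2$ gives $d+2\leq 2c$, that is, $d\leq 2c-2$. Here it is enough that $\lceil (d+2)/2\rceil\geq (d+2)/2$; we never need the exact value of the ceiling, so there is no parity case split on $d$.

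Combining the two displays gives $\CL(G)-1\leq\diam(G)\leq 2\CL(G)-2$. The only point needing care is the direction of the ceiling inequality in the second step. Since $c$ is an integer, the ceiling-free bound $d\leq 2c-2$ already matches the form stated in the corollary, so nothing is lost by discarding the ceiling.
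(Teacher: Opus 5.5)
Your proposal is correct and matches the paper's approach: the paper presents this corollary as an immediate rearrangement of the two inequalities in Theorem~\ref{thm:GENE:diam_bounds}, exactly as you do. Your handling of the ceiling is sound, since $(\diam(G)+2)/2\leq\lceil(\diam(G)+2)/2\rceil\leq\CL(G)$ gives $\diam(G)\leq 2\CL(G)-2$ directly.
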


Because cooling a graph $G$ requires that all sources be uncooled at the time of their manual cooling, all sources in a cooling sequence must be far enough apart from each other in $G$ to avoid cooling the others through propagation. It is therefore often useful to think about cooling as a graph covering or packing problem, where we wish to cover $G$ in as many balls of decreasing radius (whose centers do not touch) as possible. 

To aid in this mode of thought, we recall the following terminology. For $i\geq 0$, denote by $N_i[v]$ the set of all vertices with distance at most $i$ to $v$. More formally, $N_i[v] = \{u\in V(G) : d(v,u) \leq i\}$, where $v\in N_i[v]$ for all $i$. For $S \subseteq V(G)$, let $N_i[S] = \{u \in V(G):d(u,v) \leq i\text{ for some } v \in S\}.$ When $i=1$, we write $N[v]$ and $N[S]$ instead. We can now rephrase cooling entirely in terms of these objects.

\begin{lemma}
\label{lem:GENE:cooling_sequence_equation}
A sequence $(\vA_j)$ of vertices in a graph $G$ is a valid sequence of sources if and only if $d_G(\vA_i,\vA_j)\geq |j-i|+1$.  
\end{lemma}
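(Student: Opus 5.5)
We first pin down exactly which set of vertices is cooled at each moment. Let $\vA_1,\vA_2,\dots$ be the sources, with $\vA_j$ chosen manually in round $j$. By induction on rounds, we show that at the end of round $t$ the set of cooled vertices is exactly
\[
\bigcup_{j\le t} N_{t-j}[\vA_j].
\]
This holds provided every source chosen so far was uncooled when it was chosen.

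The induction goes as follows. In round $t+1$, the propagation step enlarges each ball $N_{t-j}[\vA_j]$ to $N_{t+1-j}[\vA_j]$, because the closed neighbourhood of a ball of radius $r$ is the ball of radius $r+1$. The manual step then adds $\vA_{t+1}=N_0[\vA_{t+1}]$. So the description of the cooled set is just the standard ball-growth picture, and the propagation rule preserves it.

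With this description, validity becomes a distance condition. The sequence is valid exactly when, for each $j$, the vertex $\vA_j$ is not yet cooled at the manual step of round $j$. At that moment, after propagation in round $j$, the cooled set is $\bigcup_{i<j} N_{j-i}[\vA_i]$. So $\vA_j$ is uncooled if and only if $d(\vA_i,\vA_j)>j-i$ for every $i<j$, that is, $d(\vA_i,\vA_j)\ge j-i+1$. Taking all ordered pairs and symmetrizing gives the condition $d(\vA_i,\vA_j)\ge |j-i|+1$ for all $i\ne j$. This also forces the sources to be distinct.

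For the reverse direction we need some care, since the ball description was derived under the assumption of validity. We argue by induction on $j$. If the distance condition holds, then all of $\vA_1,\dots,\vA_{j-1}$ were valid choices, so the ball formula applies at round $j$, and the condition then shows $\vA_j$ is uncooled. The forward direction is the contrapositive of the same computation.

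We read the statement with $i\neq j$ implicitly, since for $i=j$ the inequality $0\ge 1$ fails. Sequences that end early because the graph is fully cooled are covered automatically, because the condition only constrains sources that are actually chosen. The main obstacle is stating the ball-union invariant cleanly, in particular getting the radius $t-j$ offset right so that propagation happens before the manual step; the rest is bookkeeping.
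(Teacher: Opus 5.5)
Your proposal is correct and follows essentially the same route as the paper. Both describe the cooled set after the propagation step of round $j$ as $\bigcup_{i<j} N_{j-i}[u_i]$ and conclude that $u_j$ is uncooled at that moment exactly when $d(u_i,u_j)\ge j-i+1$ for every $i<j$; your explicit induction for the converse and your remark that the condition is meant only for $i\neq j$ make precise points the paper leaves implicit.
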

\begin{proof}
    Let $S_j$ denote the set of cooled vertices at the end of round $j$ using the sequence $(\vA_j)$, 
    noting that $S_1=\{\vA_1\}$ and that $S_j=\bigcup_{i=1}^j N_{j-i}[\vA_i]$. In round $j$, the set of cooled vertices after the propagation step is $N[S_{j-1}]=\bigcup_{i=1}^{j-1} N[N_{j-1-i}[\vA_{i}]]$ $=\bigcup_{i=1}^{j-1} N_{j-i}[\vA_i]$. We manually cool $\vA_{j}$ so that $N[S_{j-1}]\cup\{\vA_{j}\} = \bigcup_{i=1}^{j} N_{j-i}[\vA_i]=S_{j}$, noting that it must be uncooled in round $j$. 
    Thus, $\vA_{j}\notin \bigcup_{i=1}^{j-1} N_{j-i}[\vA_i]$, implying that $d(\vA_i,\vA_j)>j-i$ for $1\leq i< j$, and so $d(\vA_i,\vA_j)\geq j-i+1$ for all $\vA_i,\vA_j$ in the sequence. 

    If $d(\vA_i,\vA_j)\geq j-i+1$ for each $i<j$, then $\vA_{j}\notin \bigcup_{i=1}^{j-1} N_{j-i}[\vA_i]$. 
    This holds for each $j$, so $(\vA_j)$ is a valid sequence of sources. 
\end{proof}

A cooling sequence must therefore be a maximal vertex sequence $(\vA_j)$ satisfying $d_G(\vA_i,\vA_j)\geq |j-i|+1$, which we call the \emph{cooling condition}. This rephrasing allows for analysis of cooling purely in terms of the distances between sequence elements, thereby introducing useful new tools. One such tool is an \emph{isometric} subgraph $G'$ of a graph $G$, which is an induced subgraph of $G$ such that $d_{G'}(u,v)=d_G(u,v)$ for all $u,v\in V(G')$. 

Note that a cooling sequence of maximum length may not necessarily be an optimal cooling sequence. In particular, if a source is chosen in the final round, then the number of sources matches the number of rounds. However, it is often the case that no source is available after propagation in the final round; hence, the number of rounds is one more than the length of the cooling sequence. It is therefore possible to have two sequences of the same length where one lasts an extra round.   
 
\begin{lemma} \label{lem:GENE:isometric_subgraph}
If $G'$ is an isometric subgraph of $G$, then
$\CL(G)\geq\CL(G')
$.
\end{lemma}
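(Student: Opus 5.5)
The plan is to take an optimal cooling sequence of $G'$, view it as a sequence of vertices of $G$, and use Lemma~\ref{lem:GENE:cooling_sequence_equation} together with isometry. Isometry should guarantee two things: the sequence is still valid in $G$, and it cools the vertices of $G'$ no faster in $G$ than in $G'$. Let $k=\CL(G')$ and let $(\vA_1,\dots,\vA_m)$ be an optimal cooling sequence for $G'$, so the process on $G'$ lasts exactly $k$ rounds. Then either $m=k$, or $m=k-1$ and no source is available in round $k$. In both cases, at the end of round $k-1$ some vertex of $G'$ is still uncooled.

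First I check validity in $G$. Since $d_G(\vA_i,\vA_j)=d_{G'}(\vA_i,\vA_j)\ge |j-i|+1$, Lemma~\ref{lem:GENE:cooling_sequence_equation} shows that $(\vA_j)$ is a valid sequence of sources in $G$. The sequence need not be maximal in $G$. I therefore extend it greedily: in each later round where an uncooled vertex of $G$ remains after propagation, choose one as the next source. This gives a genuine cooling sequence of $G$ whose first $m$ terms are $\vA_1,\dots,\vA_m$, and whose remaining sources are chosen only in rounds $m+1,m+2,\dots$. The number of rounds of this process is a lower bound for $\CL(G)$.

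Next I count rounds. If $m=k$, a source is chosen in each of the first $k$ rounds, so the process on $G$ lasts at least $k$ rounds. If $m=k-1$, no extra sources enter before round $k$. By the formula in the proof of Lemma~\ref{lem:GENE:cooling_sequence_equation}, the cooled set in $G$ at the end of round $k-1$ is $\bigcup_{i=1}^{k-1}N^G_{k-1-i}[\vA_i]$. Its intersection with $V(G')$ is $\bigcup_{i=1}^{k-1}N^{G'}_{k-1-i}[\vA_i]$, because distances between vertices of $G'$ agree in $G$ and $G'$. This is exactly the cooled set of $G'$ at the end of round $k-1$, which is not all of $V(G')$. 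Hence $G$ is not fully cooled after round $k-1$, the process lasts at least $k$ rounds, and $\CL(G)\ge k=\CL(G')$.

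The main subtlety is the distinction between the length of the sequence and the number of rounds, noted just before the statement. This is why I would handle the case $m=k-1$ separately. In that case the isometry is used a second time, to compare the cooled sets restricted to $V(G')$, and not only to transfer the cooling condition.
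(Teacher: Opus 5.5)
Your proof is correct and follows the paper's argument. You transfer an optimal cooling sequence of $G'$ to $G$ via Lemma~\ref{lem:GENE:cooling_sequence_equation} and isometry. Then, when no source is chosen in the final round, you use isometry again to show that some vertex of $G'$ is still uncooled in $G$ at the end of round $\CL(G')-1$; the paper does this by checking distances from a specific vertex $v$ cooled in round $\CL(G')$. Your explicit greedy extension of the sequence to a maximal one in $G$ handles carefully a point the paper leaves implicit.
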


\begin{proof}
Consider an optimal cooling sequence $(\vA_1,\vA_2,\ldots, \vA_\mA)$ of $G'$. By Lemma \ref{lem:GENE:cooling_sequence_equation}, we have that $d_G(\vA_i,\vA_j) = d_{G'}(\vA_i,\vA_j) \geq |j-i|+1$ and thus, $(\vA_1,\vA_2,\ldots, \vA_\mA)$ is also a cooling sequence for $G$. If $\CL(G')=b,$ then we are done. If $\CL(G')=b+1$, then some $v\in V(G')$ is uncooled at the end of round $b$, and $d_{G'}(v,u_i) = d_G(v,u_i) \geq b-i+1$ for $1\leq i \leq b$. Thus, $v$ is uncooled at the end of round $b$ in $G$ and $\CL(G) \geq \CL(G').$ 
\end{proof}

We define the useful concept of a \emph{maximally cool} graph, which is a graph $G$ with $\CL(G) = \diam(G)+1$. 
As we prove in the following result, such graphs must contain at least one vertex that is a diameter distance from \emph{multiple} other vertices.

\begin{theorem}\label{thm:GENE:eccentricity}
Let $G$ be a graph of diameter $d$. If for all $v \in V(G)$ we have that $|N_d[v]\setminus N_{d-1}[v]| \leq 1,$ then $G$ is \emph{not} maximally cool.
\end{theorem}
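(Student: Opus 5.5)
The plan is to argue by contradiction, showing that under the hypothesis every cooling sequence finishes within $d$ rounds. Then $\CL(G)\le d<\diam(G)+1$, so $G$ is not maximally cool. The only information needed about the sources is the behaviour of the first source $\vA_1$. I will track the set of vertices still uncooled in round $d$, and then use the rule that a source \emph{must} be chosen whenever an uncooled vertex is available.

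First, dispose of the trivial case $d=0$. Then $G=K_1$, which is cooled in round $1$, and $\CL(G)=1=d+1$. However, the hypothesis fails there: $N_0[v]\setminus N_{-1}[v]=\{v\}$ has size $1$ only under the convention $N_{-1}[v]=\emptyset$, so we either assume $d\ge 1$ or note that $K_1$ must be excluded by convention. From now on take $d\ge 1$ and fix an arbitrary cooling sequence with first source $\vA_1$. By the description in the proof of Lemma~\ref{lem:GENE:cooling_sequence_equation}, after the propagation step of round $d$ the cooled set contains $N_{d-1}[\vA_1]$. Hence every vertex that is uncooled at that moment lies at distance at least $d$ from $\vA_1$. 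Since $d=\diam(G)$, such a vertex lies in $N_d[\vA_1]\setminus N_{d-1}[\vA_1]$. By hypothesis this set has at most one element; call it $w$ if it exists.

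Now suppose the process is still running after round $d$, that is, some vertex $x$ is uncooled at the end of round $d$. Then $x$ was also uncooled after the propagation step of round $d$, so by the above $x=w$. But then an uncooled vertex was available at the manual step of round $d$, so a source must have been chosen in that round. The only available vertex was $w$, so $w$ itself was manually cooled in round $d$, contradicting that $w$ is uncooled at the end of round $d$. Therefore every cooling sequence cools all vertices by the end of round $d$, giving $\CL(G)\le d$, and $G$ is not maximally cool.

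I expect the main point requiring care to be the bookkeeping at round $d$. One must check that the propagation in round $d$ really covers $N_{d-1}[\vA_1]$; this is the formula $N[S_{d-1}]\supseteq N_{d-1}[\vA_1]$ from Lemma~\ref{lem:GENE:cooling_sequence_equation}, which holds for $d\ge1$. One must also check that the forced-choice rule of the manual step is used correctly, that is, that the process cannot ``skip'' choosing $w$. Beyond this, the argument is short; notably, it never needs the sources after $\vA_1$ or the cooling condition between them.
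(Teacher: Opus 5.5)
Your argument is essentially the paper's. Only the first source $u_1$ matters: the propagation step of round $d$ has already cooled $N_{d-1}[u_1]$, so at most the single vertex of $N_d[u_1]\setminus N_{d-1}[u_1]$ can remain, and the forced manual step must take it. The paper splits this into the cases where $u_1$ has non-maximal or maximal eccentricity, while you treat both at once. For $d\ge 2$ your proof is complete and correct.

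The one thing to fix is your claim that $N[S_{d-1}]\supseteq N_{d-1}[u_1]$ holds for $d\ge 1$. When $d=1$ we have $S_0=\emptyset$, so nothing is cooled after the propagation step of round $1$. Then $u_1$ itself is still available, and your sentence ``the only available vertex was $w$'' fails.

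This cannot be repaired, because the statement is false at $d=1$. The graph $K_2$ satisfies the hypothesis, since each vertex has exactly one vertex at distance $1$, yet $\CL(K_2)=2=\diam(K_2)+1$. Your $d=0$ remark also gets the logic backwards. Under the natural convention $N_{-1}[v]=\emptyset$ you have $|N_0[v]\setminus N_{-1}[v]|=1\le 1$, so $K_1$ \emph{satisfies} the hypothesis and is a genuine counterexample; the hypothesis does not fail there.

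The theorem should therefore be read with $d\ge 2$. The paper's proof assumes this tacitly as well: its set $N[S_{d-1}]$ is empty when $d=1$, so ``leaving only $u$ uncooled'' is false in that case.
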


\begin{proof}
Suppose that $G$ is a graph satisfying $|N_d[v]\setminus N_{d-1}[v]| \leq 1$ for every $v\in V(G)$, and consider the cooling process in $G$, where $S_j$ is the set of cooled vertices at the end of round $j$. Let $S_1 = \{v\}$. We show that cooling in $G$ can last at most $d$ rounds.

If $v$ does not have maximum eccentricity, then $V(G)=N_{d-1}[v] \subseteq S_d$ and $G$ is cooled by the end of round $d$. Thus, $\CL(G) \leq d$, so $G$ is not maximally cool. 

If $v$ has maximum eccentricity, then $N_d[v] \setminus N_{d-1}[v] =\{u\}$ for some $u$. Therefore, at the end of the propagation step in round $d$, all of $N[S_{d-1}]$ cools, leaving only $u$ uncooled. We then manually cool $u$ in round $d$, so $\CL(G) \leq d$ and $G$ is not maximally cool.
\end{proof}

\section{Cooling Cartesian products}

We now turn to the cooling number of Cartesian products of graphs. 
Theorem~\ref{thm:GENE:diam_bounds} shows that the cooling number of any graph is bounded below by roughly half of its diameter. 
Our first result substantially improves this lower bound for the Cartesian product of general graphs, showing that the product's cooling number is at most an additive logarithmic term away from the product's diameter. 
We will often denote the factors as $G$ and $H$, and for notational convenience, we let $D_G=\diam(G)$, let $D_H=\diam(H)$, and let $D_\square=\diam(G\square H)=D_G+D_H$.

\begin{theorem}\label{thm:CART:primary_cartesian_bounds}
For graphs $G$ and $H$ with $D_G\leq D_H$, we have 
\[
D_\square+1 - 2\left\lfloor\log_2(D_G+2)\right\rfloor - \left\lfloor \frac{D_H-D_G+2}{D_G+2}\right\rfloor\leq \CL(G\square H) \leq D_\square+1.
\]
\end{theorem}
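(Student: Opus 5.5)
By Theorem~\ref{thm:GENE:diam_bounds}, the upper bound is immediate once we note that $\diam(G\square H)=D_G+D_H=D_\square$. The whole task is therefore the lower bound. The plan is to reduce it to grids via Lemma~\ref{lem:GENE:isometric_subgraph} and then build an explicit long cooling sequence on a grid using Lemma~\ref{lem:GENE:cooling_sequence_equation}.

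\emph{Reduction to grids.} Pick a diametral shortest path $P$ in $G$ on $D_G+1$ vertices and $Q$ in $H$ on $D_H+1$ vertices. Shortest paths are isometric subgraphs, and distances in a Cartesian product are sums of the factor distances. Hence $P\square Q\cong P_{D_G+1}\square P_{D_H+1}$ is an isometric subgraph of $G\square H$, and Lemma~\ref{lem:GENE:isometric_subgraph} gives $\CL(G\square H)\ge \CL(P_{m+1}\square P_{n+1})$ with $m=D_G\le n=D_H$. It then suffices to show
\[
\CL(P_{m+1}\square P_{n+1})\ \ge\ m+n+1-2\lfloor\log_2(m+2)\rfloor-\Big\lfloor\tfrac{n-m+2}{m+2}\Big\rfloor .
\]

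\emph{Square case by halving recursion.} For $n=m$ the target is $2m+1-2\lfloor\log_2(m+2)\rfloor$. I will use coordinates $(x,y)$ and start with source $u_1=(0,0)$. The cooled set then grows as the staircase $x+y\le t$. The aim is to keep the process alive close to the $2m+1$ rounds needed for this front to reach $(m,m)$.
\begin{itemize}
\item Later sources are placed near the far corner, on the far boundary, just outside the current front.
\item They are spaced so that, by Lemma~\ref{lem:GENE:cooling_sequence_equation}, they cool from the corner inward and collectively leave an uncooled sub-square (or triangle) of roughly half the side length.
\item That region is isometric, so the construction recurses on it, in the spirit of Lemma~\ref{lem:GENE:isometric_subgraph} but now inside an ongoing process.
\end{itemize}
Each halving step should cost at most $2$ rounds relative to the ideal $2m+1$. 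This gives $\lfloor\log_2(m+2)\rfloor$ halvings and the loss term $2\lfloor\log_2(m+2)\rfloor$. I would prove this by strong induction on $m$ with a slightly stronger hypothesis that allows a prescribed starting time and offset. Verifying the cooling condition $d(u_i,u_j)\ge |j-i|+1$ then reduces to $\ell_1$-distance inequalities between consecutive blocks of sources.

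\emph{Rectangular case.} For $n>m$, cut the long direction into consecutive $(m+1)\times(m+2)$-ish blocks, about $\lfloor (n-m+2)/(m+2)\rfloor$ extra blocks beyond the final square. In each transitional block the sources march along the long direction, keeping the front a ``band'' of width $m+1$. Each source in this phase adds one unit of diameter progress per round, except for a loss of at most one round per block, which arises where a new source must jump ahead and re-align. The final square block is then handled by the square construction.

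\emph{Main obstacle.} I expect the hardest step to be the explicit placement in the square recursion. The sources must remain pairwise far enough apart, including across different recursion levels, while still leaving an uncooled region of exactly the right size for the next level. My first attempt would be to place the sources of level $k$ symmetrically about the anti-diagonal on the two far sides of the current square, and check the cooling condition against the previous level by a single $\ell_1$ computation.
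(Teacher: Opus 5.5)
Your upper bound and your reduction to $P_{D_G+1}\square P_{D_H+1}$ are exactly the paper's: isometric diametral paths, additivity of distance in $\square$, then Lemma~\ref{lem:GENE:isometric_subgraph}. Everything after that is the real content, namely the grid bound of Lemma~\ref{lem:CART:cartesian_rectangular_grids}, and there your proposal has a genuine gap. No source sequence is written down or checked, and the heuristic you describe does not fit the constraints.

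In your coordinates with $u_1=(0,0)$ and $n=m$, every vertex satisfies $d(u,(0,0))+d(u,(m,m))=2m$. Suppose, as you intend, that $(m,m)$ stays uncooled through round $T-1$, where $T=2m+1-L$. Then every source $u_i$ with $i\ge 2$ must satisfy $i\le d(u_i,(0,0))\le i+L-1$. So all sources lie in a band of width $L=O(\log m)$ just ahead of the front of $u_1$, and for $i\le m-L$ none of them can be on the far sides. Your placement rule (``near the far corner, on the far boundary'') therefore says nothing about the first half of the process, even though a source is required in every round.

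That first half is where the paper incurs half of its $2\lfloor\log_2\rfloor$ loss. The anti-diagonals near the starting corner are short, so the sources sweep across them in phases of doubling length. In the paper's coordinates these are $(2^\alpha-i,2i-1)$ for $1\le i\le 2^\alpha-1$, and each phase costs one round. Your halving picture does resemble the paper's final phases, which sweep from one far side to the other and cut off successively halved triangles at the far corner. But your ``two rounds per halving'' is not derived from any mechanism. In the paper every phase costs exactly one round, whether it is a doubling sweep, a full-width middle sweep, or a halving sweep. For the same reason, your rectangular plan of finishing with ``the square construction'' on a last block cannot be a fresh application. The doubling sweeps must come at the start, and only the halving ones at the end.

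The recursion is also unjustified. Earlier sources keep propagating into the leftover region, so it is not an isometric subgraph being cooled from scratch, and Lemma~\ref{lem:GENE:isometric_subgraph} does not apply. You never formulate the strengthened hypothesis you say you need. Checking the cooling condition only against the previous level would not suffice either, since all pairs must satisfy $d(u_i,u_j)\ge|j-i|+1$.

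The missing idea, which makes the paper's verification essentially one line, is a potential. Arrange that a source cooled in round $r$ of phase $\alpha$ lies at distance exactly $C-\alpha-r$ from the far corner $z$, for a fixed constant $C$.
\begin{itemize}
\item Within a phase, consecutive sources differ by the step $(-1,+2)$. So same-phase sources are at distance $3|j-i|$, while the distance to $z$ drops by exactly one per round.
\item For sources in phases $\alpha<\beta$, the triangle inequality through $z$ gives distance at least the round difference plus $\beta-\alpha$, hence at least the round difference plus $1$.
\end{itemize}
The loss relative to $D_\square+1$ is then exactly the number of phases. That number is $2\lfloor\log_2(D_G+2)\rfloor$ plus the $\lfloor (D_H-D_G+2)/(D_G+2)\rfloor$ middle sweeps.
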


The upper bound of Theorem~\ref{thm:CART:primary_cartesian_bounds} follows directly from Theorem~\ref{thm:GENE:diam_bounds}. 
To prove the lower bound, we first require two lemmas.  

We adapt Lemma~\ref{lem:GENE:isometric_subgraph} to Cartesian products of graphs, showing that the product's cooling number is related to the cooling number of the product of isometric subgraphs of each of the factors.

\begin{lemma}\label{lem:CART:isometric_subgraph_cartesian}
If $G'$ and $H'$ are isometric subgraphs of $G$ and $H$, respectively, then
$$
\CL(G\square H ) \geq \CL(G' \square H')
.$$
\end{lemma}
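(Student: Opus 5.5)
The plan is to reduce this to Lemma~\ref{lem:GENE:isometric_subgraph}. For that it suffices to show that $G'\square H'$ is an isometric subgraph of $G\square H$. Lemma~\ref{lem:GENE:isometric_subgraph} then gives $\CL(G\square H)\geq \CL(G'\square H')$ at once.

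First, I will check that $G'\square H'$ is an induced subgraph of $G\square H$ on the vertex set $V(G')\times V(H')$. Take two vertices $(u,x),(v,y)$ with $u,v\in V(G')$ and $x,y\in V(H')$. They are adjacent in $G\square H$ exactly when either $u=v$ and $xy\in E(H)$, or $x=y$ and $uv\in E(G)$. Since $G'$ and $H'$ are induced subgraphs, $xy\in E(H)$ if and only if $xy\in E(H')$, and $uv\in E(G)$ if and only if $uv\in E(G')$. So adjacency in $G\square H$ coincides with adjacency in $G'\square H'$ on this vertex set.

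Second, I will verify that distances are preserved, using the standard distance formula for Cartesian products:
\[ d_{G\square H}((u,x),(v,y))=d_G(u,v)+d_H(x,y). \]
This formula is presumably assumed, since the paper already writes $D_\square=D_G+D_H$. If needed, I would justify it briefly. For the inequality $\le$, concatenate a shortest $u$--$v$ path in the first coordinate (holding $x$ fixed) with a shortest $x$--$y$ path in the second coordinate (holding $v$ fixed). For $\ge$, note that each edge of the product changes exactly one coordinate, and changes it by one step in the corresponding factor. So the projections of any walk give walks in $G$ and $H$ whose lengths sum to the length of the walk.

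Applying the formula in both products and using isometry of the factors gives
\[ d_{G'\square H'}((u,x),(v,y))=d_{G'}(u,v)+d_{H'}(x,y)=d_G(u,v)+d_H(x,y)=d_{G\square H}((u,x),(v,y)). \]
Hence $G'\square H'$ is an isometric subgraph of $G\square H$, and Lemma~\ref{lem:GENE:isometric_subgraph} finishes the proof.

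There is no real obstacle here. The only step with any content is the distance formula for Cartesian products, and that follows from the projection argument above.
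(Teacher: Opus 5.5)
Your proposal is correct and follows the same route as the paper. You use the additive distance formula $d_{G\square H}((u,x),(v,y))=d_G(u,v)+d_H(x,y)$ together with isometry of the factors to show that $G'\square H'$ is an isometric subgraph of $G\square H$, and then invoke Lemma~\ref{lem:GENE:isometric_subgraph}. Your extra checks of inducedness and of the distance formula only fill in details the paper takes for granted.
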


\begin{proof}
Let $(u,v),(x,y) \in V(G'\square H')$. We then have that 
$$d_{G'\square H'}((u,v),(x,y))
= d_{G'}(u,x) + d_{H'}(v,y)
= d_{G}(u,x)+d_{H}(v,y)
=d_{G\square H}((u,v),(x,y)).$$
Thus, $G'\square H'$ is an isometric subgraph of $G\square H$. It follows from Lemma \ref{lem:GENE:isometric_subgraph} that $\CL(G \square H) \geq \CL(G' \square H')$. 
\end{proof}

Lemma~\ref{lem:CART:isometric_subgraph_cartesian} provides good bounds on the cooling number of complex graphs by considering the cooling number of their elementary subgraphs. In particular, because every graph $G$ has an isometric path with $\diam(G)+1$ vertices, any lower bound concerning $\CL(P_m\square P_n)$ is a lower bound on $\CL(G\square H)$ for appropriate graphs $G$ and $H$. We present such a bound below, noting that it is a generalization of a lower bound for $\CL(P_n\square P_n)$ found in \cite{cooling}.   
\begin{lemma} \label{lem:CART:cartesian_rectangular_grids}
If $m\le n$, then
$\CL(P_m\square P_n) \ge D_\square+1 - 2\left\lfloor\log_2(m+1)\right\rfloor - \left\lfloor \frac{n-m+2}{m+1}\right\rfloor$.
\end{lemma}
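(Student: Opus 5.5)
We will prove the bound by exhibiting one explicit sequence of sources in $P_m\square P_n$, checking it against the cooling condition of Lemma~\ref{lem:GENE:cooling_sequence_equation}, and counting how many rounds it lasts. Coordinates are $(a,b)$ with $a\in[m]$ and $b\in[n]$, so $d((a,b),(a',b'))=|a-a'|+|b-b'|$ and $D_\square=m+n-2$. We put $\vA_1=(1,1)$; at the end of round $j$ this source has cooled exactly $N_{j-1}[\vA_1]$. Alone, it would reach the opposite corner $(m,n)$ only in round $D_\square+1$. The cooling number is a maximum over valid sequences, but a source must be chosen in every round while one is available. So the task is to place each forced source $\vA_j$ where it does not shorten the process. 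Concretely, we want every vertex near $(m,n)$ to stay uncooled for as long as possible. Each round in which the $j$-th source makes $d(\vA_j,(m,n))$ drop below $D_\square-(j-1)$ costs rounds, and we must show the total cost is at most the subtracted terms.

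The placement rule is to put each source on, or just beyond, the current front of $\vA_1$, namely $N_j[\vA_1]\setminus N_{j-1}[\vA_1]$, which is the anti-diagonal $a+b=j+2$. A vertex on this front is at distance exactly $D_\square-j$ from $(m,n)$, so it loses nothing. Two front vertices chosen in rounds $i<j$ must be at distance at least $j-i+1$, so consecutive sources have to move sideways along the front. The front has width at most $m$, so this sideways room runs out.

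\textbf{Stage 1 (long direction).} While the front still sits in a roughly $(m+1)\times(m+1)$ block, we zigzag sources across its width. When we must move to the next block of length $m+1$ in the $b$-direction, we lose at most one round. There are at most $\lfloor (n-m+2)/(m+1)\rfloor$ such transitions before the front enters a final square-like region of side about $m$. Each transition can be checked by a direct distance computation with Lemma~\ref{lem:GENE:cooling_sequence_equation}.

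\textbf{Stage 2 (square region, halving).} Here we use the recursive scheme behind the known bound for $\CL(P_n\square P_n)$ in \cite{cooling}. When the available part of the front is exhausted, we place the next source in the middle of the remaining uncooled corner triangle. This splits that triangle into two triangles of roughly half the side length, and costs at most two rounds. We then recurse inside one of the halves. After at most $\lfloor\log_2(m+1)\rfloor$ halvings the remaining region has constant size. The loss in Stage 2 is therefore at most $2\lfloor\log_2(m+1)\rfloor$.

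Adding the two stages, the last vertex to cool does so no earlier than round $D_\square+1-2\lfloor\log_2(m+1)\rfloor-\lfloor (n-m+2)/(m+1)\rfloor$. Here we use the formula $S_j=\bigcup_{i\le j}N_{j-i}[\vA_i]$ from the proof of Lemma~\ref{lem:GENE:cooling_sequence_equation}: it suffices to exhibit one vertex whose distance to every $\vA_i$ is at least the required amount.

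The main obstacle is Stage 2. We must specify exact coordinates for the sources in the halving recursion and verify the cooling condition between sources from different recursion levels, not only within one level. We must also show that each halving step loses at most two rounds with the right floor behaviour, including when $m+1$ is not a power of $2$. We plan to set this up as an induction on $m$, with the hypothesis stated for a triangular region $\{(a,b): a+b\ge c\}$ in a corner. This lets the inductive step reuse the statement on a half-size triangle after shifting coordinates and time indices. Stage 1 and the final counting should then be routine bookkeeping.
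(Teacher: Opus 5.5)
Your framework matches the paper's: start at $(1,1)$, keep $(m,n)$ uncooled, put sources on the first uncooled anti-diagonal and move them sideways, lose one round each time a sweep runs out. Your Stage 1 is essentially the paper's $\lfloor (n-m+2)/(m+1)\rfloor$ full-width phases of $m$ sources each.

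The gap is the logarithmic term, which you charge entirely to the corner $(m,n)$ at two rounds per halving. Half of it is actually incurred at the \emph{start}, which your Stage 1 ignores.

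\begin{itemize}
\item In round $j$ the front $a+b=j+2$ has only $j+1$ vertices. Sources with the same lag must be pairwise incomparable in the coordinate order, since otherwise their distance equals their round difference.
\item Hence after $(1,1)$ only three sources fit on the front, e.g.\ $(3,1),(2,3),(1,5)$. Then you must jump an anti-diagonal and get at most seven, and so on.
\item These are the paper's doubling phases $s_{\alpha,i}=(2^\alpha-i,2i-1)$, each costing one round. About $\lfloor\log_2(m+1)\rfloor$ of them occur before the front reaches width $m$.
\item Relatedly, a front vertex does not ``lose nothing''. In round $j$ it is at distance $D_\square-j$ from $(m,n)$, below your own threshold $D_\square-(j-1)$, so the first front source already costs a round.
\end{itemize}

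At the other end the paper splits nothing. It keeps sweeping from $a=m$ with $b$ increasing by $2$ per round. The sweep hits $b=n$ after about half the current front length, giving phases of $2^{x+1-\alpha}-1$ sources, again one round each.

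Your Stage 2 mechanism does not deliver two rounds per halving as described. Take a source at the midpoint $(a_0,b_0)$ of the hypotenuse. It leaves $(m,n)$ in the region $\{a\ge a_0,\ b\ge b_0\}$, which that source reaches exactly as fast as the front would. The two side pieces $\{a<a_0\}$ and $\{b<b_0\}$ have depth only about half the remaining distance to $(m,n)$. So recursing into one of them moves the last-cooled vertex and forfeits on the order of half the remaining side length, not two rounds.

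Your bookkeeping (no loss at the start, two per halving at the end) does not describe what happens. Once the unavoidable start loss is added, it also exceeds $2\lfloor\log_2(m+1)\rfloor$.

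What is missing is the explicit schedule the paper uses: doubling sweeps out of $(1,1)$, full-width sweeps, then halving sweeps into $(m,n)$. In phase $\alpha$, the source of round $r$ sits at distance $m+n-\alpha-r$ from $(m,n)$. With that, verification is short. Same-phase sources differ by multiples of $(-1,+2)$, so they are at distance $3|j-i|$. Sources from different phases satisfy the cooling condition by the triangle inequality through $(m,n)$.
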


\begin{proof}
Note that $D_\square=\diam(P_m\square P_n) = n+m-2$ and let $V(P_k) = [k]$. For notational convenience, we define $x=\left\lfloor \log_2(m+1)\right\rfloor-1$ and $y=\left\lfloor \frac{n-m+2}{m+1}\right\rfloor$. 
We cool $P_m\square P_n$ while avoiding cooling the vertex $(m,n)$ for as long as possible. 

We now provide a sequence of sources $(s_{\alpha,i})$ broken down into $2x+y+2$ phases, each consisting of a number of rounds depending on the phase $\alpha$. We note that if $m \leq 2^{x+1}$, then phases $x+1$ and $x+y+2$ do not occur, but the total number of sources remains the same.
During the first $x$ phases, we cool $s_{\alpha,i}=(2^\alpha-i,2i-1)$ in the $i$-th manual step of phase $\alpha$ for $1 \le i \le 2^\alpha-1$, with $s_{1,1}=(1,1)$. 
Each $s_{\alpha,i}$ has distance $m+n-2^\alpha-i+1 = m+n-\alpha-r$ from $(m,n)$, where $r=2^\alpha-\alpha+i-1$ is the round in which $s_{\alpha,i}$ is cooled. Phase $x$ ends with $2^{x+1}-2-x$ sources cooled. 

During phase $x+1$, we cool $s_{x+1,i}=(2^{x+1}-i,2i-1)$ in the $i$-th manual step for $1 \le i \le m-2^{x+1}$, noting that $s_{x+1,i}$ has distance $m+n-2^{x+1}-i+1 = m+n-\alpha-r$ from $(m,n)$, where $r=2^{x+1}-x+i-2$ is the round in which $(2^{x+1}-i,2i-1)$ is cooled. Phase $x+1$ ends with $m-x-2$ sources cooled. 

During phase $x+1+\alpha$ for $1 \le \alpha \leq y$, we cool $s_{x+1+\alpha,i}=(m+1-i,2i-1+(m+1)(\alpha-1))$ in the $i$-th manual step for $1 \le i \le m$ and note that $s_{x+1+\alpha,i}$ has distance $m+n-m\alpha-\alpha-i+1 = m+n-(x+1+\alpha)-r$ from $(m,n)$, where $r=m\alpha-x-2+i$ is the round in which $s_{x+1+\alpha,i}$ is cooled. Phase $x+y+1$ ends with $m(y+1)-x-2$ sources cooled. 

During phase $x+y+2$, we cool $s_{x+y+2,i}=(2m-2^{x+2}-i+1,n-2m+2^{x+2}+2i+2)$ in the $i$-th manual step for $1 \le i \le m-2^{x+1}$, noting that $s_{x+y+2,i}$ has distance $m-i-3 = m+n-(x+y+2)-r$ from $(m,n)$, where $r = n-x-y-1+i$ is the round in which $s_{x+y+2,i}$ is cooled. Phase $x+y+2$ ends with $n+m-x-y-2^{x+1}-1$ sources cooled. 

For the remaining $x$ phases, we cool $s_{x+y+2+\alpha,i}=(m+1-i,n-2^{x+2-\alpha}+2i+2)$ in the $i$-th manual step of phase $x+y+2+\alpha$ for $1 \le i \le 2^{x+1-\alpha}-1$, noting that $s_{x+y+2+\alpha,i}$ has distance $2^{x+2-\alpha}-i-3 = m+n-(x+y+2+\alpha)-r$ from $(m,n)$, where $r = m+n-(x+y+\alpha)-2^{x+2-\alpha}+i-1$ is the round in which $s_{x+y+2+\alpha,i}$ is cooled. Phase $2x+y+2$ ends with $n+m-2x-y-3 = D_\square-2x-y-1$ sources cooled. 

We now show that $(s_{\alpha,i})$ satisfies the cooling condition. 
For $s_{\alpha,i}$ and $s_{\alpha,j},$ note that  $d(s_{\alpha,i},s_{\alpha,j}) = 3|j-i|> |j-1|+1.$ 
The source cooled in round $r$ during phase $\alpha$ has distance $D_\square+2-\alpha-r = m+n-\alpha-r$ to $(m,n)$. Therefore, for $s_{\alpha,i}$ and $s_{\beta,j}$, then $d(s_{\alpha,i},(m,n)) \leq d(s_{\alpha,i},s_{\beta,j})+d(s_{\beta,j},(m,n))$ by the triangle inequality.
This yields $m+n-i-\alpha \leq d(s_{\alpha,i},s_{\beta,j})+m+n-j-\beta$, which implies that $$d(s_{\alpha,i},s_{\beta,j})\geq j-i+\beta-\alpha\geq |j-i|+1.$$
It follows that $(s_{\alpha,i})$ satisfies the cooling condition and cooling lasts until round $D_\square-2x-y-1$.
\end{proof}

Given Lemmas~\ref{lem:CART:isometric_subgraph_cartesian} and \ref{lem:CART:cartesian_rectangular_grids}, the proof of Theorem~\ref{thm:CART:primary_cartesian_bounds} now follows. Below, we provide computational results for $\CL(P_m \Box P_n)$ in Table~\ref{tab:CART:cartesian_product_table}, noting that the bound given by Lemma~\ref{lem:CART:cartesian_rectangular_grids} is within an additive factor of 3 of each result.

\begin{table}[h]
       \centering
       \begin{tabular}{|c|c|c|c|c|c|c|c|c|c|}
       \hline
           $\square$ & $P_2$ & $P_3$ & $P_4$ & $P_5$ & $P_6$ & $P_7$ & $P_8$ & $P_9$ & $P_{10}$ \\
           \hline
           $P_2$ & 2 & 3 & 4 & 4 & 5 & 6 & 6 & 7 & 8\\
           \hline
           $P_3$ & ~ & 4 & 4 & 5 & 6 & 7 & 7 & 8 & 9\\
           \hline
           $P_4$ & ~ & ~ & 5 & 6 & 7 & 7 & 8 & 9 & 10\\
           \hline
           $P_5$ & ~ & ~ & ~ & 7 & 7 & 8 & 9 & 10 & 11\\
           \hline
           $P_6$ & ~ & ~ & ~ & ~ & 8 & 9 & 10 & 11 & 12\\
           \hline
           $P_7$ & ~ & ~ & ~ & ~ & ~ & 10 & 11 & 12 & 12\\
           \hline
           $P_8$ & ~ & ~ & ~ & ~ & ~ & ~ & 11 & $\geq12$ & $\geq13$\\
           \hline           
           \end{tabular}
       \caption{The cooling number of $P_m \square P_n$.}
       \label{tab:CART:cartesian_product_table}
   \end{table}

We now have good lower bounds for the cooling number of Cartesian products based on the diameters of the factors, and so turn our attention to lower bounds given purely by the cooling numbers of the factors.
A graph is \emph{cooling critical} if there is precisely one uncooled vertex at the start of the last round during each maximum length cooling process on $G$. For an example of such a graph, consider $P_{2n}$.

\begin{theorem}
\label{thm:CART:cartesian_lower_by_cooling}
Let $G$ and $H$ be graphs. 
If both $G$ and $H$ are cooling critical, then
$\CL(G\square H) \geq \CL(G) + \CL(H) - 2$. Otherwise, 
$\CL(G\square H) \geq \CL(G) + \CL(H) - 1$.
\end{theorem}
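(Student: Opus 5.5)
The plan is to build an explicit cooling sequence in $G\square H$ by running an optimal cooling sequence of $G$ inside one $G$-fiber, then running an optimal cooling sequence of $H$ inside one $H$-fiber through the last vertex reached in $G$. Lemma~\ref{lem:GENE:cooling_sequence_equation} then reduces validity to distance inequalities. These follow from $d_{G\square H}((a,b),(a',b'))=d_G(a,a')+d_H(b,b')$. Write $c_G=\CL(G)$ and $c_H=\CL(H)$. Let $(u_1,\dots,u_p)$ be a maximum length cooling sequence of $G$ and $(v_1,\dots,v_q)$ one of $H$.

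\textbf{Step 1 (the $G$-phase).} Choose a vertex $x\in V(G)$ as follows. If cooling of $G$ ends with $p=c_G-1$, take $x$ still uncooled after the propagation step of round $c_G$. If $p=c_G$, take $x=u_{c_G}$. In either case $d_G(u_i,x)\ge c_G-i+1$ for all $i<c_G$, because $x\notin N_{c_G-i}[u_i]$. Set the sources to be $(u_i,v_1)$ in round $i$ for $i\le c_G-1$, and $(x,v_1)$ in round $c_G$. These are valid, since all distances here are distances in $G$.

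\textbf{Step 2 (the $H$-phase).} For $2\le j\le q$, cool $(x,v_j)$ in round $c_G+j-1$. Pairs within this phase are valid because their distances are $H$-distances. For a cross pair with $i\le c_G-1$ we get
\[
d\big((u_i,v_1),(x,v_j)\big)\ge (c_G-i+1)+d_H(v_1,v_j)\ge (c_G-i+1)+j>(c_G+j-1)-i+1 ,
\]
using $d_H(v_1,v_j)\ge j$ from the cooling condition in $H$. Hence the process lasts at least $c_G+q-1$ rounds. When $q=c_H$ this is already $c_G+c_H-1$.

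\textbf{Step 3 (the endgame, which is the main obstacle).} The remaining work is the final-round bookkeeping when $q=c_H-1$. In that case some $z\in V(H)$ is uncooled after propagation in round $c_H$. The candidate for the last cooled vertex is $(x,z)$, or a vertex $(x',z)$ with $x'$ a second leftover vertex of $G$. I must compare $d((u_i,v_1),(x,z))$ with $(c_G+c_H-1)-i$, and $d((x,v_j),(x,z))=d_H(v_j,z)$ with $c_H-j$. Both are controlled by $z\notin N_{c_H-j}[v_j]$.

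Without any extra assumption, the only loss is that the round in which $x$ is used may coincide with the round in which $v_1$ is used. This gives $c_G+c_H-2$ in all cases, which is the claimed bound when both factors are cooling critical.

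If some factor is not cooling critical, I would use symmetry of $\square$ to make that factor play the role of $G$. Then either $p=c_G$ (the last round of $G$ contains a genuine source), or at least two vertices $x\ne x'$ remain uncooled at the start of round $c_G$. In the latter case one of them is used as the pivot $x$, and the other offsets the overlap, so that the vertex $(x',z)$ or $(x,z)$ survives to round $c_G+c_H-1$.

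I expect the delicate point to be verifying this extra round: checking that the second leftover vertex $x'$ still satisfies the needed distance bound to every source cooled in round $c_G$ and later. I would first try to handle it by choosing $x$ and $x'$ as the two distinct uncooled vertices and placing the whole $H$-phase in the fiber over $x$.
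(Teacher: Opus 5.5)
Your Step~1 is false in the case $p=c_G-1$, and the rest of the argument inherits the error. If the optimal process on $G$ has no source in round $c_G$, then all of $G$ is cooled by the end of the propagation step of round $c_G$. So there is no vertex $x$ ``still uncooled after the propagation step of round $c_G$''.

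A vertex that is merely uncooled at the \emph{start} of round $c_G$ satisfies only $d_G(u_i,x)\ge c_G-i$, with equality for some $i$; that equality is why it cools by propagation in round $c_G$. Hence $(x,v_1)$ is cooled by propagation in round $c_G$ of your process and is not a legal source. If your inequality $d_G(u_i,x)\ge c_G-i+1$ held, then $(u_1,\dots,u_{c_G-1},x)$ would be a cooling sequence of $G$ with a source in round $c_G$, contradicting $p=c_G-1$. For $G=P_2$ you would be cooling two adjacent vertices in consecutive rounds.

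The same off-by-one affects $z$ in Step~3. When $q=c_H-1$ you only know $d_H(v_j,z)\ge c_H-j$, not $z\notin N_{c_H-j}[v_j]$. Consequently the count $c_G+q-1$ in Step~2 is unjustified whenever $p=c_G-1$. Step~3 only asserts that ``the only loss'' is one round, without exhibiting a sequence.

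Your plan for the non-critical case also does not go through. Suppose the $H$-phase lives in the fiber over a leftover vertex $x$; it must start with $(x,v_2)$ in round $c_G$, since $(x,v_1)$ is illegal. Then $(x,z)$ is at distance only $d_H(v_j,z)\ge c_H-j$ from the source $(x,v_j)$ cooled in round $c_G+j-2$. That is one short of what is needed to survive the propagation step of round $c_G+c_H-2$, and it fails whenever $d_H(v_{c_H-1},z)=1$, as for $P_{2n}$. So only $(x',z)$ is guaranteed to survive, and one survivor does not force the extra round.

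The missing idea is to pivot on a genuine source and to use leftover vertices only as coordinates of the last vertex to cool. Cool $(u_1,v_1),\dots,(u_p,v_1)$ in rounds $1,\dots,p$, and then $(u_p,v_j)$ in round $p+j-1$ for $2\le j\le q$. This is the paper's construction with the factors interchanged: the paper runs $H$ in the fiber over $u_1$ and then $G$ in the fiber over $v_q$. Its $p+q-1$ sources are valid by exactly your Step~2 computation.

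Now let $x$ be uncooled at the end of round $p$ in $G$, and $z$ uncooled at the end of round $q$ in $H$. Then $d((u_i,v_1),(x,z))\ge (p+1-i)+q$. Because $x\ne u_p$, also $d((u_p,v_j),(x,z))\ge 1+(q+1-j)$. Hence $(x,z)$ survives the propagation step of round $p+q$.

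This gives the following cases.
\begin{itemize}
\item When $p=c_G-1$ and $q=c_H-1$, you get $c_G+c_H-2$.
\item In that same case, a second leftover $x'$ (or $z'$) in a non-critical factor gives a second survivor, forcing round $c_G+c_H-1$.
\item In the mixed cases, $(u_p,z)$ or $(x,v_q)$ is still uncooled at the end of round $p+q-1$.
\end{itemize}
As you noted, for the non-critical factor you must choose an optimal sequence that witnesses non-criticality. Throughout you also need optimal sequences, not merely maximum-length ones, since the paper points out these can differ.
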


\begin{proof}
Let $(\vA_1,\vA_2,\ldots, \vA_\mA)$ and $(\vB_1,\vB_2,\ldots, \vB_c)$ be optimal cooling sequences for $G$ and $H$, respectively.   
We cool sources in $G\square H$ as follows. In round $1\leq i \leq c$, we manually cool $x_i = (\vA_1,\vB_i)$. In round $c+1\leq j\leq c+b-1$, we manually cool $x_j = (\vA_{j-c+1},\vB_\mB)$. 
 
Since $d_{G\square H}((u_1,v_i),(u_1,v_j)) = d_H(v_i,v_j)$, the first $\mB$ vertices in the given sequence pairwise satisfy the cooling condition. Similarly, the final $\mA-1$ sources in the sequence pairwise satisfy the cooling condition because $$d_{G\square H}((u_{j-\mB+1},v_\mB),(u_{i-\mB+1},v_\mB)) = d_G(u_{j-\mB+1},u_{i-\mB+1}).$$
For $1\leq i \leq \mB$ and $1+\mB\leq j \leq \mA+\mB-1$, we have that
\begin{align*}
d_{G\square H}(x_i,x_j) 
&= d_{G\square H}((\vA_1,\vB_i),(\vA_{j-\mB+1},\vB_{\mB})) = d_G(\vA_1,\vA_{j-\mB+1}) + d_H(\vB_i,\vB_{\mB})\\
&\geq (|(j-\mB+1)-1|+1) + (|i-\mB|+1)\\
& > |j-i|+1.
\end{align*}

Therefore, $(x_1,x_2,\dots,x_{\mA+\mB-1})$ satisfies the cooling condition in $G\square H$ and we have that $\CL(G\square H) \geq b+c-1$. 
If $\CL(G)=\mA$ and $\CL(H)=\mB$, then we are done. 
    
If $\CL(G)=\mA+1$ and $\CL(H)=\mB$, then some $\vA_\infty\in V(G)$ is cooled in round $\mA+1$. Hence, $d_G(\vA_i,\vA_\infty)\geq \mA+1-i$ for $1 \leq i \leq \mA$, so $(\vA_\infty,\vB_{\mB})$ has distance at least $\mA+\mB-i$ from $x_i$ for $1\leq i \leq \mA+\mB-1$. Hence, $(\vA_\infty,\vB_{\mB})$ is not cooled by the end of round $\mA+\mB-1$ and cooling lasts $\mA+\mB = \CL(G)+\CL(H)-1$ rounds. An analogous argument shows that $\mA+\mB = \CL(G)+\CL(H)-1$ when 
$\CL(G)=\mA$ and $\CL(H)=\mB+1$. 

If $\CL(G)=\mA+1$ and $\CL(H)=\mB+1$, then some $\vA_\infty \in V(G)$ is cooled in round $\mA+1$ and some $\vB_\infty\in V(H)$ is cooled in round $\mB+1$. 
These have distance $d_G(\vA_i,\vA_\infty)\geq \mA+1-i$ from each $\vA_i$ for $1 \leq i \leq \mA$ and $d_H(\vB_i,\vB_\infty)\geq \mB+1-i$ from each $\vB_i$ for $1 \leq i \leq \mB$.
Therefore, $(\vA_\infty,\vB_\infty)$ has distance at least $\mA+\mB+1-i$ from each $x_i$ for $1\leq i \leq \mA+\mB-1$, implying that $(\vA_\infty,\vB_\infty)$ is not cooled by the propagation step of round $\mA+\mB$, and so $\CL(G\square H) \ge \mA+\mB = \CL(G)+\CL(H)-2$. 

Finally, without loss of generality, suppose $G$ is not cooling critical and that there exists some $w_\infty \in V(G)$ that satisfies the same condition as $\vA_\infty$. Hence, $(w_\infty,\vB_\infty)$ has distance at least $\mA+\mB+1-i$ from each $x_i$ for $1\leq i \leq \mA+\mB-1$, so there are two uncooled vertices after the propagation step of round $\mA+\mB$.
Only one of these can be manually cooled during round $\mA+\mB$, so cooling lasts until round $\mA+\mB+1 = \CL(G)+\CL(H)-1$.   
\end{proof}

The first bound of Theorem~\ref{thm:CART:cartesian_lower_by_cooling} is tight when $G=P_2$ and $H=C_3$. If instead $H=P_2$, then the second bound is tight. However, the first bound follows from a slightly stronger condition---in both factors, we assume that the last source in an optimal cooling sequence could be adjacent to the last vertex to cool. Thus, we ``run out of room'' and cooling is forced to end a round earlier. If this is not the case, then we can slightly improve the lower bound, even if both factors are cooling critical.

\begin{theorem} 
\label{thm:CART:cartesian_lower_by_cooling_w_critical}
Let $G$ and $H$ be cooling critical graphs. 
If there exists an optimal cooling sequence for $G$ where the last source is nonadjacent to the last cooled vertex, then
\[\CL(G\square H) \geq \CL(G) + \CL(H) - 1.
\]
\end{theorem}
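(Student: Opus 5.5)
The plan is to modify the sequence from the proof of Theorem~\ref{thm:CART:cartesian_lower_by_cooling} so that, in the doubly critical case, cooling survives one more round. The step I am least sure of is how to read the hypothesis, so I fix that first.

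\textbf{Reading of the hypothesis.} Write $\CL(G)=\mA+1$ and $\CL(H)=\mB+1$, with optimal sequences $(\vA_1,\dots,\vA_\mA)$ and $(\vB_1,\dots,\vB_\mB)$. Since $G$ and $H$ are cooling critical, there are unique last vertices $\vA_\infty$ and $\vB_\infty$. By Lemma~\ref{lem:GENE:cooling_sequence_equation}, they satisfy $d_G(\vA_i,\vA_\infty)\geq \mA+1-i$ and $d_H(\vB_i,\vB_\infty)\geq \mB+1-i$.
\begin{itemize}
\item Taking $i=\mA$ gives $d_G(\vA_\mA,\vA_\infty)\ge 1$. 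So if ``last source'' means $\vA_\mA$, nonadjacency gains exactly one unit: $d_G(\vA_\mA,\vA_\infty)\geq 2$.
\item If instead it refers to an earlier source, the hypothesis may already be automatic. In that case I would interpret it as one extra unit of slack, $d_G(\vA_k,\vA_\infty)\geq \mA+2-k$ for the relevant index $k$. The rest of the plan is unchanged.
\end{itemize}
This reading is what I would pin down first, since the whole argument consumes exactly this one unit of slack.

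\textbf{Construction.} The previous construction ran the $H$-sequence in the fibre of $\vA_1$ and then the $G$-sequence in the fibre of $\vB_\mB$. I would reverse the order, so the last product source carries the slack:
\begin{itemize}
\item for $1\le i\le \mB$, take $x_i=(\vA_1,\vB_i)$;
\item then take $x_{\mB+j-1}=(\vA_j,\vB_\infty)$ for $2\le j\le \mA$.
\end{itemize}
This is $\mA+\mB-1$ sources. The key point is to place the $G$-part in the fibre of $\vB_\infty$ rather than $\vB_\mB$. Then every distance from the target $(\vA_\infty,\vB_\infty)$ to a $G$-part source is exactly a $G$-distance, and the final source sits at distance at least $2$ from it.

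\textbf{Verification.}
\begin{enumerate}
\item \emph{Cooling condition within each part.} Within each block this follows from the factor sequences, exactly as before.
\item \emph{Cooling condition across the two parts.} For $i\le\mB$ and $j\ge 2$, the distance $d_G(\vA_1,\vA_j)+d_H(\vB_i,\vB_\infty)$ is at least $j+(\mB+1-i)$. This is more than the required $|(\mB+j-1)-i|+1$.
\item \emph{Survival of the target.} Check that $(\vA_\infty,\vB_\infty)$ is uncooled after the propagation step of round $\mA+\mB$, using the inequalities above. This gives $\CL(G\square H)\ge \mA+\mB$ again.
\item \emph{The extra round.} Exhibit a second vertex still uncooled after propagation in round $\mA+\mB$, so cooling lasts to round $\mA+\mB+1=\CL(G)+\CL(H)-1$. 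My candidate is $(\vA_\infty,\vB_\mB)$, or symmetrically a vertex of the $H$-fibre through $\vA_\infty$. Its distance to $x_i$ for $i\le \mB$ is at least $d_G(\vA_1,\vA_\infty)+d_H(\vB_i,\vB_\mB)$. Its distance to the later sources is at least $d_G(\vA_j,\vA_\infty)$, and this is where the slack at the last $G$-source is needed.
\end{enumerate}

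\textbf{Fallback if the second vertex fails.} Instead of a second uncooled vertex, I would append the source $(\vA_\infty,\vB_\mB)$ in round $\mA+\mB$ and show that $(\vA_\infty,\vB_\infty)$ still survives. The extra unit of distance from $\vA_\mA$ to $\vA_\infty$ is precisely what keeps the new source and the target from conflicting in the cooling condition.

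\textbf{Main obstacle.} The delicate inequalities are the ones involving the last $G$-source; all other pairs have slack to spare. I expect the obstacle to be choosing the fibre indices ($\vB_\mB$ versus $\vB_\infty$) so that the single unit of slack lands exactly at the last source.
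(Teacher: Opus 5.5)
\paragraph{A genuine gap: the extra round is never obtained.}
Your sequence has only $b+c-1$ sources, the same number as in Theorem~\ref{thm:CART:cartesian_lower_by_cooling}. So the extra round must come either from a second vertex that survives the propagation step of round $b+c$, or from one more valid source. Cooling criticality of both factors blocks both options.

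\emph{The target is not secured.} Since $(u_j,v_\infty)$ is cooled in round $c+j-1$, the vertex $(u_\infty,v_\infty)$ survives the propagation step of round $b+c$ only if $d_G(u_j,u_\infty)\ge b-j+2$ for every $2\le j\le b$. The hypothesis gives this extra unit only for $j=b$.

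\emph{The second candidate fails.} The vertex $(u_\infty,v_c)$ is safe from the $G$-block, because $d_H(v_c,v_\infty)\ge 1$. However, it lies at distance $d_G(u_1,u_\infty)$ from $x_c=(u_1,v_c)$. This distance can equal $b$, and it always does when $G$ is maximally cool, since then $b=\diam(G)$. In that case $(u_\infty,v_c)$ is cooled by propagation in round $b+c$. For the same reason your fallback source $(u_\infty,v_c)$ violates the cooling condition with $x_c$, which requires distance $b+1$. Also, in the fallback the new source and the target share their $G$-coordinate, so $d_G(u_b,u_\infty)\ge 2$ plays no role there.

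\emph{The construction fails on the paper's tight case.} Suppose $G$ and $H$ are both maximally cool and cooling critical, e.g.\ full centipedes. Let $(w,z)$ survive the propagation step of round $b+c$.
\begin{itemize}
\item Since $d_G(u_1,w)\le b$, survival against each $x_i$ with $i\le c$ forces $d_H(v_i,z)\ge c-i+1$. By criticality of $H$, $z=v_\infty$.
\item Then $d_H(v_1,v_\infty)=c$ forces $w$ to be uncooled at the end of round $b$ in $G$. By criticality of $G$, $w=u_\infty$.
\end{itemize}
So your sequence ends in round $b+c=\CL(G)+\CL(H)-2$.

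\paragraph{The missing idea: a bridge source.}
You need one extra source so that $b+c$ sources fit. Put $G$ first:
\begin{itemize}
\item cool $(u_i,v_1)$ in round $i$ for $i\le b$;
\item cool the bridge $(u_b,v_2)$ in round $b+1$;
\item cool $(u_\infty,v_j)$ in round $b+j$ for $2\le j\le c$.
\end{itemize}
The hypothesis $d_G(u_b,u_\infty)\ge 2$ is used exactly for the consecutive pair $(u_b,v_2)$, $(u_\infty,v_2)$. Every other check needs only minimal slack, e.g.\ $d_G(u_i,u_\infty)+d_H(v_1,v_j)\ge (b-i+1)+j$. The target $(u_\infty,v_\infty)$ remains uncooled at the end of round $b+c$ because the bridge delays the second block by one round. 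Hence $d_H(v_j,v_\infty)\ge c-j+1$ suffices for the source of round $b+j$.

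Your reversed orientation cannot be fixed the same way. A bridge from an $H$-block into a $G$-block in the $v_\infty$-fibre would need $d_H(v_c,v_\infty)\ge 2$, which is a hypothesis on $H$ that you do not have.
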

\begin{proof}
Let $(\vA_1,\vA_2,\ldots, \vA_\mA)$ and $(\vB_1,\vB_2,\ldots, \vB_{\mB})$ be optimal cooling sequences for $G$ and $H$, respectively. 
Let $\vA_\infty$ denote the unique vertex that is cooled in round $\CL(G)$ when cooling $G$, and define $\vB_\infty$ likewise for $H$. 
    
We cool sources in $G\square H$ as follows. In round $1\leq i \leq \mA$, we manually cool the vertex $x_i = (\vA_i,\vB_1)$, noting that $d_{G\square H}((\vA_i,\vB_1),(\vA_j,\vB_1)) = d_G(\vA_i,\vA_j)$. 

In round $\mA+1$, we manually cool $x_{\mA+1} = (\vA_\mA,\vB_2)$, noting that $$d_{G\square H}((\vA_i,\vB_1),(\vA_\mA,\vB_2)) = d_G(\vA_i,\vA_\mA)+d_H(\vB_1,\vB_2) > (\mA+1)-i+1.$$
    
Finally, we manually cool $x_{\mA+j} = (u_\infty,v_j)$ in the manual step of rounds $\mA+j$ for $2\le j \le \mB$. For $1\leq i \leq \mA$, we have $d_G(\vA_i,\vA_\infty) \geq \mA -i+1$ and, because $\vA_\infty$ is the last source cooled in $G$, we have $d_G(\vA_\mA,\vA_\infty)\geq 2$. It is then straightforward to verify that $d_{G\square H}(x_i,x_{\mA+j})\geq (\mA+j)-i+1$, for $1\leq i \leq \mA+1$ and $2\leq j \leq \mB$. 

Therefore, $(x_i)_{i=1}^{b+c}$ satisfies the cooling condition. Finally, $d_{G\square H}(x_i,(\vA_\infty,\vB_\infty))\geq (b+c)-i+1$ for $1\leq i\leq \mA+\mB$, showing that $(\vA_\infty,\vB_\infty)$ is uncooled at the end of round $\mA+\mB$ and that cooling must last at least $\mA+\mB+1=\CL(G)+\CL(H)-1$ rounds. 
\end{proof}

Theorem~\ref{thm:CART:cartesian_lower_by_cooling_w_critical} is tight when $G$ and $H$ are full centipedes, and this hints toward our next development. So far, we have analyzed the cooling number of Cartesian product graphs through the lens of particularly chosen subgraphs. In Theorem~\ref{thm:CART:primary_cartesian_bounds}, we used the largest isometric subpath in each factor graph to find a lower bound for the cooling number of the Cartesian product.
However, if the factor graphs contain different types of isometric subgraphs, we may obtain tighter bounds. 

Trees are a natural first candidate for such isometric subgraphs, and Theorem~\ref{thm:CART:cartesian_lower_by_cooling_w_critical} tells us that maximally cool trees are a particularly good choice, as their Cartesian products are maximally cool. To see why, note that in a maximally cool tree of order at least $3$, the last source cannot be adjacent to the last cooled vertex, as both must be diameter distance from the first source.

\begin{corollary}\label{cor:CART:long_burning_tree_products}
Let $T_1$ and $T_2$ be maximally cool trees, each of order at least $3$. We then have that $T_1\square T_2$ is maximally cool. 
\end{corollary}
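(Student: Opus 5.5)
The plan is to sandwich $\CL(T_1\square T_2)$ between the upper bound of Theorem~\ref{thm:GENE:diam_bounds} and the lower bound of Theorem~\ref{thm:CART:cartesian_lower_by_cooling_w_critical}, so that equality with $D_\square+1$ is forced. Write $D_1=\diam(T_1)$ and $D_2=\diam(T_2)$. Since $T_1$ and $T_2$ are maximally cool, we have $\CL(T_1)=D_1+1$ and $\CL(T_2)=D_2+1$. Moreover, $\diam(T_1\square T_2)=D_1+D_2$, because distances in a Cartesian product are additive. So it suffices to show $\CL(T_1\square T_2)\geq D_1+D_2+1$; the reverse inequality is the upper bound of Theorem~\ref{thm:GENE:diam_bounds}.

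The lower bound splits into cases according to criticality.

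\emph{Case 1: at least one of $T_1,T_2$ is not cooling critical.} The second bound of Theorem~\ref{thm:CART:cartesian_lower_by_cooling} gives
\[
\CL(T_1\square T_2)\geq \CL(T_1)+\CL(T_2)-1=D_1+D_2+1.
\]

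\emph{Case 2: both trees are cooling critical.} Here we invoke Theorem~\ref{thm:CART:cartesian_lower_by_cooling_w_critical}. This requires an optimal cooling sequence $(u_1,\dots,u_b)$ of $T_1$ whose last source $u_b$ is nonadjacent to the last cooled vertex $u_\infty$.

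\emph{Key step: verifying nonadjacency.} Because cooling lasts $D_1+1$ rounds, Lemma~\ref{lem:GENE:cooling_sequence_equation} applied to $u_\infty$ in round $D_1+1$ gives $d(u_1,u_\infty)\geq D_1$, so $d(u_1,u_\infty)=D_1$. Next consider the sequence length. If $b=D_1+1$, then $u_b=u_\infty$ is manually chosen in the last round, and at that time it is the unique uncooled vertex; this needs separate care. Otherwise $b=D_1$, and the cooling condition gives $d(u_1,u_b)\geq D_1$, so $d(u_1,u_b)=D_1$ as well.

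Suppose $u_b$ and $u_\infty$ were adjacent. Then both lie at distance exactly $D_1$ from $u_1$. In a tree (indeed in any bipartite graph), two adjacent vertices cannot be equidistant from a third vertex. This contradiction shows they are nonadjacent.

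\emph{The main obstacle: pinning down the $b=D_1+1$ case.} The paper's convention is that $u_\infty$ is cooled in round $\CL$ but is not one of the $b$ sources. I would handle this by taking the last source to be $u_{D_1}$, the source chosen in round $D_1$. In round $D_1$, it satisfies $d(u_1,u_{D_1})\geq D_1$, hence equals $D_1$. The vertex $u_\infty$, which is cooled in round $D_1+1$, also satisfies $d(u_1,u_\infty)=D_1$. The same parity argument then applies. The order-at-least-$3$ hypothesis guarantees $D_1\geq 2$, so that these vertices are genuinely distinct from $u_1$ and the argument is non-degenerate.

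With nonadjacency established, Theorem~\ref{thm:CART:cartesian_lower_by_cooling_w_critical} gives $\CL(T_1\square T_2)\geq D_1+D_2+1$, and the corollary follows.
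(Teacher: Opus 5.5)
Your proposal is correct and matches the paper's argument: use the non-critical bound of Theorem~\ref{thm:CART:cartesian_lower_by_cooling} when some factor is not cooling critical, and Theorem~\ref{thm:CART:cartesian_lower_by_cooling_w_critical} otherwise, with nonadjacency following because the last source and the last cooled vertex both lie at distance $\diam(T_1)$ from the first source, which is impossible for adjacent vertices in a bipartite graph. Your separate $b=D_1+1$ subcase is actually vacuous for a cooling critical connected graph of order at least $2$: the unique uncooled vertex at the start of the last round has a cooled neighbor, so it is cooled by propagation. Treating that subcase anyway does no harm.
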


Therefore, if $G$ and $H$ each have subtrees that are both isometric and maximally cool, we know that $G \Box H$ must also be maximally cool. Denote by $\mathcal{T}_d$ the set of all maximally cool trees of diameter $d$. 

\begin{corollary}\label{cor:CART:isometric_subtrees}
    Let $G$ and $H$ be graphs. If $G$ and $H$ contain isometric subtrees in $\mathcal{T}_{\diam(G)}$ and $\mathcal{T}_{\diam(H)}$, respectively, then $G\square H$ is maximally cool. 
\end{corollary}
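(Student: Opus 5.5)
The plan is to combine Corollary~\ref{cor:CART:long_burning_tree_products} with Lemma~\ref{lem:CART:isometric_subgraph_cartesian} and the upper bound of Theorem~\ref{thm:GENE:diam_bounds}. Let $T_1\subseteq G$ and $T_2\subseteq H$ be the given isometric subtrees with $T_1\in\mathcal{T}_{\diam(G)}$ and $T_2\in\mathcal{T}_{\diam(H)}$, so that $\diam(T_1)=D_G$, $\diam(T_2)=D_H$, $\CL(T_1)=D_G+1$ and $\CL(T_2)=D_H+1$.

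The main case is when both trees have order at least $3$. Corollary~\ref{cor:CART:long_burning_tree_products} then gives that $T_1\square T_2$ is maximally cool, that is,
\[
\CL(T_1\square T_2)=\diam(T_1\square T_2)+1=D_G+D_H+1=D_\square+1.
\]
Since $T_1,T_2$ are isometric in $G,H$, Lemma~\ref{lem:CART:isometric_subgraph_cartesian} yields $\CL(G\square H)\ge \CL(T_1\square T_2)=D_\square+1$. Theorem~\ref{thm:GENE:diam_bounds} gives the reverse inequality $\CL(G\square H)\le D_\square+1$, so equality holds and $G\square H$ is maximally cool.

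The remaining work is the degenerate case where some tree has order at most $2$; this is the only real obstacle. Order $1$ means the factor is $K_1$, so $G\square H\cong H$ or $K_1$ and the conclusion is immediate from the hypothesis on the other factor. Order $2$ means $T_i=P_2$, so the factor has diameter $1$ and is complete. I would handle this directly. If $T_1=T_2=P_2$, then $K_m\square K_n$ (with $m,n\ge2$) contains the isometric $4$-cycle $P_2\square P_2$, which has cooling number $2$, or I could cool two adjacent sources and check that a vertex survives to round $3$ when $m$ or $n$ exceeds $2$. Note that $C_4$ is not maximally cool, so if $G=H=K_2$ the statement as written appears to fail, and I would expect the corollary to be intended under the order-at-least-$3$ convention of Corollary~\ref{cor:CART:long_burning_tree_products}. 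I would state that restriction explicitly, or else verify the small cases separately. If one factor is $P_2$ and the other tree has order at least $3$, I would mimic the construction in the proof of Theorem~\ref{thm:CART:cartesian_lower_by_cooling_w_critical}, using $T_2$ as the factor whose last source is nonadjacent to the last cooled vertex.
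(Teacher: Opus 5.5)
Your main case (both trees of order at least $3$) is exactly the argument the paper intends. The paper gives no separate proof; the corollary follows from Corollary~\ref{cor:CART:long_burning_tree_products} together with Lemma~\ref{lem:CART:isometric_subgraph_cartesian} and the upper bound of Theorem~\ref{thm:GENE:diam_bounds}. You are also right that the statement as written fails for $G=H=K_2$, since $\CL(C_4)=2$.

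\textbf{The gap is your treatment of the mixed case}, where one tree is $P_2$ and the other has order at least $3$. The construction in the proof of Theorem~\ref{thm:CART:cartesian_lower_by_cooling_w_critical} cools $(u_b,v_2)$ in round $b+1$, so it needs a second source $v_2$ in the other factor. An optimal cooling sequence of $P_2$ (or of any $K_m$) has length $1$, so there is nothing to mimic.

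Worse, the claim itself is false. Take $G=K_2$ with $V(K_2)=\{0,1\}$, and $H=K_{1,3}$ with center $z$ and leaves $a,b,c$. Here $P_2\in\mathcal{T}_1$ and $K_{1,3}\in\mathcal{T}_2$: cool $a$, then $b$, and $c$ cools in round $3$. The product has diameter $3$, so no source can be cooled in round $4$. Lasting four rounds therefore requires:
\begin{itemize}
\item $s_1$ of eccentricity $3$, say $s_1=(a,0)$;
\item by Lemma~\ref{lem:GENE:cooling_sequence_equation}, a vertex $w\neq s_3$ uncooled after round $3$, with $d(w,s_1)\ge 3$ and $d(w,s_2)\ge 2$;
\item $d(s_3,s_1)\ge 3$ and $d(s_2,s_3)\ge 2$.
\end{itemize}
The only vertices at distance $3$ from $(a,0)$ are $(b,1)$ and $(c,1)$, so $\{s_3,w\}=\{(b,1),(c,1)\}$. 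Then $s_2$ must be at distance at least $2$ from $(a,0)$, $(b,1)$ and $(c,1)$. Every vertex at distance at least $2$ from $(a,0)$, namely $(b,0),(c,0),(z,1),(b,1),(c,1)$, is within distance $1$ of $(b,1)$ or $(c,1)$. Hence $\CL(K_2\square K_{1,3})=3=\diam(K_2\square K_{1,3})$. The same example also shows that Theorem~\ref{thm:CART:cartesian_lower_by_cooling_w_critical} fails when $H=P_2$, so you cannot lean on it here.

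The correct repair is to assume $\diam(G),\diam(H)\ge 2$, i.e., both trees have order at least $3$, matching Corollary~\ref{cor:CART:long_burning_tree_products}. If you want to allow a diameter-$1$ factor, it must be $K_m$ with $m\ge 3$, and then a direct argument works. Let $T$ be the other tree, of diameter $d$. An optimal sequence of $T$ is some $(u_1,\dots,u_d)$ with $u_\infty$ uncooled after round $d$. The sources $(0,u_1),\dots,(0,u_d),(1,u_\infty)$ are valid, and $(2,u_\infty)$ is still uncooled after round $d+1$.

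Finally, in your $K_m\square K_n$ remark, consecutive sources can never be adjacent; you mean two sources at distance $2$.
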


Armed with this new tool, we turn our attention to identifying particular classes of maximally cool trees. 

\begin{theorem}[{\cite[Theorem 3]{cooling}}] \label{thm:CART:cool_centipedes}
All full centipedes are maximally cool.
\end{theorem}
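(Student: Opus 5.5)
To prove that every full centipede is maximally cool, I will exhibit an explicit sequence of sources and check it with Lemma~\ref{lem:GENE:cooling_sequence_equation}. Let $T$ be a full centipede with spine $p_1,p_2,\ldots,p_k$. Since every non-leaf vertex has degree exactly $3$, each internal spine vertex $p_i$ with $2\le i\le k-1$ carries exactly one leg $\ell_i$. Each endpoint $p_1$ and $p_k$ carries two legs; call them $\ell_1,\ell_1'$ and $\ell_k,\ell_k'$. The longest paths run from a leg at $p_1$ to a leg at $p_k$, so $\diam(T)=k+1$. By the upper bound of Theorem~\ref{thm:GENE:diam_bounds} it suffices to show $\CL(T)\geq k+2$. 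That is, I need a valid sequence of sources such that some vertex is still uncooled after the propagation step of round $k+2$, or equivalently after round $k+1$ ends.

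My planned construction walks along the legs from one end to the other:
\[
\vA_1=\ell_1,\ \vA_2=\ell_2,\ \ldots,\ \vA_{k-1}=\ell_{k-1},\ \vA_k=\ell_k,
\]
optionally followed by one more source $\vA_{k+1}=\ell_k'$.

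\textbf{Checking the cooling condition.} For $i<j\le k$ we have $d(\ell_i,\ell_j)=(j-i)+2\geq j-i+1$, so the first $k$ sources are valid. For the optional last source, $d(\ell_i,\ell_k')=k-i+2\ge (k+1)-i+1$ for $i\le k-1$. However, $d(\ell_k,\ell_k')=2$, which is exactly the required $2$ for consecutive sources, so $\ell_k'$ is also valid.

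\textbf{Finding a late vertex.} I then need a vertex that survives until round $k+2$, and here the simple sequence fails. Every vertex of $T$ is within distance $\le k+1-i$ of some $\ell_i$ by round $k+1$. In particular the opposite leg $\ell_1'$ sits at distance $2$ from $\vA_1$ and is cooled early. So the greedy left-to-right order is wrong, and the real work is choosing the order so that one vertex $w$ satisfies $d(\vA_i,w)\ge k+2-i$ for all $i$.

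\textbf{The intended fix.} Reserve $w=\ell_1'$ (or $\ell_k'$) as the last vertex, so that its distances to the sources must be about $k+2-i$. This forces the source in round $i$ to be roughly at distance $k+2-i$ from $w$. The sources should therefore start far away, near the $p_k$ end, and move toward $w$: take $\vA_1=\ell_k$, $\vA_2=\ell_k'$, then $\vA_3=\ell_{k-1},\ldots$, moving inward by one spine step per round. I then need to verify two things.
\begin{itemize}
\item Consecutive and non-consecutive distances still satisfy the cooling condition. For legs on the spine, $d(\ell_a,\ell_b)=|a-b|+2$ gives the needed slack of $1$.
\item The distance from $\ell_1'$ to the $i$-th source is at least $k+2-i$, so $\ell_1'$ survives the propagation in round $k+1$.
\end{itemize}
The bookkeeping will follow the phase-style index accounting of Lemma~\ref{lem:CART:cartesian_rectangular_grids}. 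It must handle the one round in which two sources are used at the same spine vertex; this is exactly what buys the extra round.

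\textbf{Main obstacle.} The hardest step is making the indices match exactly: the final sources near $p_1$ (such as $\ell_2$ or $p_1$'s other leg $\ell_1$) must stay nonadjacent to earlier sources while leaving $\ell_1'$ uncooled through round $k+1$. Small cases $k=1,2$ should be checked by hand. I would first try the sequence $\ell_k,\ell_k',\ell_{k-1},\ldots,\ell_2,\ell_1$, verify its round count, and adjust the last one or two sources if it falls short.
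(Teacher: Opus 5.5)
Your final candidate works as stated and needs no adjustment. Take $s_1=\ell_k$, $s_2=\ell_k'$, and $s_r=\ell_{k+2-r}$ for $3\le r\le k+1$ (so $s_{k+1}=\ell_1$), with $w=\ell_1'$ as the late vertex. Up to reflection, this is the construction the paper relies on. The paper only cites this theorem, but the sequence $(u_1,x_2,x_3,\dots)$ from the proof of Lemma~\ref{lem:CART:cooling_centipedes_with_lobster_claws}, specialized to a full centipede, is in your notation $\ell_1,\ell_1',\ell_2,\dots,\ell_k$ with $\ell_k'$ surviving. So there is no missing idea. The gap is only that you defer the verification and hedge on it, and it takes a few lines.

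Use $d(\ell_a,\ell_b)=|a-b|+2$ for legs at distinct spine vertices $p_a,p_b$, and $d(\ell_j,\ell_j')=2$. Then $d(s_1,s_2)=2$, $d(s_1,s_r)=d(s_2,s_r)=r$ for $r\ge 3$, and $d(s_r,s_{r'})=|r-r'|+2$ for $3\le r<r'$. So the condition of Lemma~\ref{lem:GENE:cooling_sequence_equation} holds. For the late vertex, $d(w,s_1)=d(w,s_2)=k+1$, $d(w,s_r)=k+3-r$ for $3\le r\le k$, and $d(w,s_{k+1})=2$. Each of these is at least $k+2-r$, so $w$ is uncooled at the end of round $k+1$ and $\CL(T)\ge k+2=\diam(T)+1$. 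The case $k=1$, where $T=K_{1,3}$ and your leg labels collapse, is immediate: cool two legs and the third survives.

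Two things in your write-up should be corrected.

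\emph{The target condition.} What you need is that some vertex is uncooled at the \emph{end of round $k+1$}. Being uncooled after the propagation step of round $k+2$ is a different, stronger statement, not an equivalent one.

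\emph{The diagnosis of your first attempt.} The direction of travel is irrelevant by symmetry; left-to-right works fine as $\ell_1,\ell_1',\ell_2,\dots,\ell_k$. What went wrong was where you used the doubled pair of legs. Spending both legs of $p_k$ at the end uses up the only vertex that could have survived. Spending both legs of the \emph{starting} end vertex in rounds $1$ and $2$ is what gains the extra round. No phase-style bookkeeping as in Lemma~\ref{lem:CART:cartesian_rectangular_grids} is needed.
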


Using Theorem~\ref{lem:GENE:isometric_subgraph} and Theorem~\ref{thm:CART:cool_centipedes}, if a tree $T$ has a full centipede of length $k$ as a subtree, then $\CL(T) \geq k.$ In particular, this means that trees whose non-leaf vertices have high minimum degree are maximally cool, as they must contain an induced full centipede of diameter length.

\begin{corollary}
\label{cor:CART:cool_centipedes}
All trees with no vertices of degree 2 are maximally cool. 
\end{corollary}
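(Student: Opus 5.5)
The plan is to show that a tree $T$ with no vertices of degree $2$ contains a full centipede of diameter $\diam(T)$ as an isometric subgraph. Lemma~\ref{lem:GENE:isometric_subgraph} and Theorem~\ref{thm:CART:cool_centipedes} then give
\[
\CL(T)\geq \CL(C)=\diam(C)+1=\diam(T)+1.
\]
The reverse inequality $\CL(T)\le \diam(T)+1$ is the upper bound of Theorem~\ref{thm:GENE:diam_bounds}, so equality holds and $T$ is maximally cool.

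To build $C$, let $d=\diam(T)$ and fix a diametral path $P=x_0x_1\cdots x_d$ in $T$. The endpoints $x_0$ and $x_d$ are leaves. Otherwise the path could be extended, since $T$ is acyclic and any further neighbor of an endpoint lies off $P$, contradicting that $P$ is longest. Each internal vertex $x_i$ with $1\le i\le d-1$ has degree at least $2$ in $T$. Since no vertex has degree $2$, its degree is at least $3$. Hence $x_i$ has a neighbor $y_i\notin\{x_{i-1},x_{i+1}\}$, and $y_i\notin V(P)$ because $T$ has no cycles.

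Let $C$ be the subgraph induced by $V(P)\cup\{y_1,\dots,y_{d-1}\}$. The $y_i$ are pairwise distinct, since a common $y_i=y_j$ would close a cycle with the segment of $P$ between $x_i$ and $x_j$. Thus $C$ is a tree with spine $x_1\cdots x_{d-1}$. In $C$, each $x_i$ with $1\le i\le d-1$ has degree exactly $3$: two path neighbors plus $y_i$, and no other edges, since any extra edge would create a cycle in $T$. The vertices $x_0$, $x_d$, and all $y_i$ are leaves of $C$. So $C$ is a full centipede of diameter $d$.

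It remains to check that $C$ is isometric in $T$. Any connected subgraph of a tree is isometric, because the unique path between two vertices of the subtree lies inside it. The degenerate cases $d\le 1$ (that is, $K_1$ and $K_2$) should be handled separately or absorbed into the convention of Theorem~\ref{thm:CART:cool_centipedes}. Both are maximally cool directly: $\CL(K_1)=1$ and $\CL(K_2)=2$.

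The only real subtlety is confirming that $C$ matches exactly the paper's notion of a full centipede, with every non-leaf vertex of degree $3$. This is why we take only one extra leaf $y_i$ per internal spine vertex, rather than all the neighbors of $x_i$.
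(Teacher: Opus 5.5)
Your proof is correct and follows the same route as the paper. The paper justifies the corollary in one line: such a tree contains an induced (hence isometric) full centipede of diameter length, so the isometric-subgraph lemma together with the theorem that full centipedes are maximally cool gives the result. Your explicit construction along a diametral path with one pendant neighbor per internal vertex, and your separate treatment of $K_1$ and $K_2$, fill in details the paper leaves implicit.
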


However, these are not the only maximally cool trees. Inspired by the construction of full centipedes, we can construct an infinite family of maximally cool trees containing a vertex of degree 2. 

\begin{lemma}\label{lem:CART:cooling_centipedes_with_lobster_claws}
Let $T$ be the tree consisting of a path $u_1,u_2,\ldots,u_{2k+1},$ a leaf $x_i$ adjacent to $u_i$ for each $2 \leq i \leq 2k$ with $i \neq k+1,$ and leaves $y_1$ and $y_2$ adjacent to $x_k$. We then have that $T$ is maximally cool.
\end{lemma}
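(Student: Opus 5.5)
The plan is to show that $\CL(T)\ge \diam(T)+1$ by writing down one explicit cooling sequence. The matching upper bound then comes free from Theorem~\ref{thm:GENE:diam_bounds}. All the checking goes through Lemma~\ref{lem:GENE:cooling_sequence_equation}, and the leaf $u_{2k+1}$ serves as a witness vertex that stays uncooled until the last round.

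\textbf{Diameter.} First I compute $\diam(T)=2k$. The path $u_1,\dots,u_{2k+1}$ has length $2k$. For any leg $x_i$ and any vertex $w$ of $T$, a direct count gives $d(x_i,w)\le 2k$. The new leaves also stay within $2k$; for example,
\[
d(y_1,u_{2k+1})=k+3,\qquad d(y_1,x_{2k})=k+3,\qquad d(y_1,u_1)=k+1 .
\]
These are at most $2k$ once $k\ge 3$. The small values of $k$ I will check by hand or by the computational code.

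\textbf{The sequence.} I take $2k$ sources, one per round:
\[
s_1=u_1,\quad s_i=x_i\ (2\le i\le k-1),\quad s_k=y_1,\quad s_{k+1}=y_2,\quad s_j=x_j\ (k+2\le j\le 2k).
\]
The point of the extra leaves $y_1,y_2$ is to fill rounds $k$ and $k+1$. In a full centipede these rounds would use the legs $x_k,x_{k+1}$. Here $u_{k+1}$ has no leg, and the claw at $x_k$ supplies two leaves at distance $2$ from each other instead.

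\textbf{Verifying the cooling condition.} I verify $d(s_i,s_j)\ge |j-i|+1$ for all $i<j$. The needed distances are:
\[
d(x_i,x_j)=|i-j|+2,\qquad d(u_1,x_j)=j,\qquad d(y_1,y_2)=2,
\]
\[
d(y_a,x_j)=|k-j|+3 \ (j\neq k),\qquad d(y_a,u_1)=k+1 .
\]
Each instance of the inequality is then a one-line check. The tightest cases are $(u_1,x_j)$, $(u_1,y_2)$ and $(y_2,x_j)$ for $j\ge k+2$, and they hold with equality or slack $1$.

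\textbf{The witness vertex.} I show $d(s_i,u_{2k+1})\ge 2k+1-i$ for every $i$. The distances are:
\[
d(u_1,u_{2k+1})=2k,\qquad d(x_i,u_{2k+1})=2k+2-i,\qquad d(y_a,u_{2k+1})=k+3 .
\]
All of these satisfy the bound. So $u_{2k+1}$ is still uncooled at the end of round $2k$, exactly as in the last paragraph of the proof of Lemma~\ref{lem:GENE:isometric_subgraph}. Cooling therefore lasts at least $2k+1=\diam(T)+1$ rounds.

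The main obstacle I expect is the bookkeeping around the degree-$2$ vertex $u_{k+1}$, that is, confirming that $y_1,y_2$ placed in rounds $k$ and $k+1$ are far enough from all earlier and later legs. The diameter check for small $k$ is the other place where care is needed.
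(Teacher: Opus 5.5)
Your sequence $(u_1,x_2,\dots,x_{k-1},y_1,y_2,x_{k+2},\dots,x_{2k})$ and your witness $u_{2k+1}$ are exactly the paper's. Your distance formulas are correct, so for $k\ge 3$ your argument is complete and coincides with the paper's proof. (A cosmetic slip: the pairs $(y_2,x_j)$ with $j\ge k+2$ have slack $3$, not $1$. The pairs attaining equality are $(u_1,x_j)$, $(u_1,y_2)$ and $(y_1,y_2)$.)

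The real problem is the step you deferred. The small cases cannot be checked, because the statement is false for $k=2$, and the construction is undefined for $k=1$, where there is no $x_k$.

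For $k=2$, your own formulas give $d(y_1,u_5)=d(y_1,x_4)=5$, so $\diam(T)=5$. Your sequence $(u_1,y_1,y_2,x_4)$ only shows $\CL(T)\ge 5$, and in fact $\CL(T)=5$. A six-round process would need sources $s_1,\dots,s_5$ and a vertex $w$ still uncooled after round $5$, with $d(s_1,w)\ge 5$ and $d(s_1,s_5)\ge 5$. The only pairs at distance $5$ are $\{y_a,u_5\}$ and $\{y_a,x_4\}$.
\begin{itemize}
\item If $s_1=y_a$, then $\{s_5,w\}=\{u_5,x_4\}$. The only remaining vertex at distance at least $4$ from $s_1$ is $u_4$, which is adjacent to $s_5$.
\item If $s_1\in\{u_5,x_4\}$, then $\{s_5,w\}=\{y_1,y_2\}$. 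This forces $s_4=u_1$, since $x_2$ is adjacent to $s_5$. Then $s_3$ would have to be $u_2$ or $x_2$, which are adjacent to $s_4$ and to $s_5$ respectively.
\end{itemize}

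So the lemma needs the hypothesis $k\ge 3$, which is exactly when $k+3\le 2k$ and $\diam(T)=2k$. The paper's proof has the same oversight, since it asserts $\diam(T)=2k$ without restriction. Its only use in the paper, for complete binary trees of height $h>2$ with $k=h$, is unaffected.
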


\begin{proof}
We use the cooling sequence $(\vA_1,x_2,x_3,\dots,x_{k-1},y_1,y_2,x_{k+2},x_{k+3},\dots,x_{2k})$.  
This sequence contains $2k=\diam(T)$ vertices. It is straightforward to show that this is a valid cooling sequence and that $u_{2k+1}$ is uncooled at the end of round $2k$. Therefore, $\CL(T) \geq 2k+1$. 
\end{proof}

Many graphs do not contain full centipedes as isometric subgraphs. However, all graphs contain a partial centipede as an isometric subgraph (for example, any diameter-length path), and the cooling number of these trees is known. 

\begin{lemma}[{\cite[Theorem 3.1.2]{hughes_thesis}}] \label{lem:CART:centipede_cooling}
If $T$ is a centipede with a spine of $n$ vertices and with $p$ spine vertices having a leaf, then
$
\CL(T) = \left\lfloor \frac{n+p+2}{2}\right\rfloor
$.
\end{lemma}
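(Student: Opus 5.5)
We prove the formula $\CL(T)=\lfloor (n+p+2)/2\rfloor$ by matching a lower-bound construction with an upper bound on every cooling sequence. Label the spine $w_1,\dots,w_n$ and let $\ell_i$ be the leaf attached to $w_i$ when one exists. Every step will go through the cooling condition of Lemma~\ref{lem:GENE:cooling_sequence_equation}.

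\textbf{Lower bound.} We build a sequence that sweeps from one end of the spine to the other, keeping the far end $w_n$ (or a leaf attached near it) uncooled as long as possible. The first source is $w_1$, or $\ell_1$ if it exists. At each later step we move as little as the cooling condition allows:
\begin{itemize}
\item if the next spine vertex carries a leaf, we use that leaf as a source, since a leaf at distance two along the spine costs the same as a bare spine vertex while consuming less of the spine;
\item otherwise we advance along the spine.
\end{itemize}
Each source is then at distance exactly $j-i+1$ from its predecessors and close to the sweep frontier. On average the sequence advances half a spine step per round, except at legged vertices, where the leg adds an extra round. Counting, the sweep uses roughly $(n+p)/2$ rounds before $w_n$ cools, which gives $\lfloor (n+p+2)/2\rfloor$. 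We then fix the parities of $n$ and $p$, and where the legs sit, by adjusting the last one or two sources. Where convenient we can instead use Lemma~\ref{lem:GENE:isometric_subgraph}, since a subcentipede obtained by deleting end vertices is isometric; this allows induction on $n$.

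\textbf{Upper bound.} This is the main obstacle, because we must rule out all cooling sequences. Let $(u_j)$ be any sequence satisfying the cooling condition and lasting $r$ rounds. Project each source to its spine index $\pi(u_j)$, so that $d(u_i,u_j)\le |\pi(u_i)-\pi(u_j)|+2$. We then run a potential argument. At the end of each round, the cooled set meets the spine in a union of intervals. We claim that each new round increases the quantity
\[
\Phi = \#\{\text{cooled spine vertices}\} + \#\{\text{cooled legs}\}
\]
by at least $2$, except for a bounded number of rounds at the start and end. The reason is that every existing cooled interval expands by two vertices per round unless it touches an end of the spine, and a new source adds at least one vertex. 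Comparing with $\Phi\le n+p$ then yields $r\le\lfloor (n+p+2)/2\rfloor$.

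The delicate part is the boundary effect when the cooled region hits an end of the spine, together with legs that cool "for free" one round after their spine vertex. To handle this, we would try induction on $n$: remove $w_1$ (and $\ell_1$) and compare the cooling process on $T$ with that on the smaller centipede. The aim is to show that the cooling number drops by at most $1$ when two units of $n+p$ are removed, with the base cases being small paths and single full segments, which are settled by Theorem~\ref{thm:GENE:diam_bounds}.
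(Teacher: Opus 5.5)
The paper quotes this lemma from Hughes's thesis without proof, so I am judging your outline on its own. It has the right two ingredients, but neither bound is actually established, and you have placed the difficulty in the wrong half.

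\textbf{Upper bound.} Your potential $\Phi$ is just the number of cooled vertices. The claim ``at least $2$ per round except for a bounded number of rounds'' does not give the exact constant. You need the \emph{only} exceptions to be round $1$ (exactly one vertex) and the final round (at least one); a single further exceptional round already weakens the bound to $r\le\lfloor(n+p+3)/2\rfloor$. The interval-expansion reasoning is beside the point: in the extremal sweep, propagation cools only one vertex per round. The proposed induction on $n$ is unproved and also unnecessary. The correct reason takes one line and holds in every connected graph $G$. In a round $j\ge 2$ that is not the last, propagation cools at least one vertex, because the cooled set is nonempty and proper and $G$ is connected. The manual step then cools exactly one more, since otherwise every vertex would already be cooled and round $j$ would be the last. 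A process lasting $r\ge 2$ rounds therefore cools at least $1+2(r-2)+1=2r-2$ vertices, so $\CL(G)\le\lfloor(|V(G)|+2)/2\rfloor$. For the centipede, $|V(T)|=n+p$. This silently requires each spine vertex to carry at most one leg, for instance by taking the spine to be a longest path. Without that convention the formula is false: $K_{1,3}$ with a one-vertex spine has $n=p=1$, but $\CL(K_{1,3})=3$.

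\textbf{Lower bound.} ``Roughly $(n+p)/2$ rounds'' followed by unspecified parity adjustments does not prove an exact value. By the count above, every round other than the first and last must cool \emph{exactly} two vertices. You get this from the invariant that after each manual step the cooled set has exactly one uncooled neighbour, and it is a spine vertex. Take the spine to be a longest path, so $w_1$ and $w_n$ carry no legs, and start at $w_1$. Whenever propagation cools $w_i$ with $i<n$, apply the first option that is available:
\begin{itemize}
\item cool $\ell_i$, if it exists and is still uncooled;
\item otherwise cool $\ell_{i+1}$, if $w_{i+1}$ has a leg;
\item otherwise cool $w_{i+1}$.
\end{itemize}
Each source is uncooled after the propagation step, so validity is automatic. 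A leg creates no new uncooled neighbour, so the invariant is restored. The process then ends in one of two ways:
\begin{itemize}
\item propagation cools $w_n$ alone, giving $(n+p+2)/2$ rounds when $n+p$ is even;
\item $w_n$ is cooled manually, giving $(n+p+1)/2$ rounds when $n+p$ is odd.
\end{itemize}
Both equal $\lfloor (n+p+2)/2\rfloor$, with no adjustment needed.

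Your rule as worded can misfire. In round $2$, $w_2$ is reached before its leg is cooled. If ``the next spine vertex'' means $w_3$, you cool $\ell_3$; the next propagation then cools both $\ell_2$ and $w_3$, and a round is lost. On the full centipede with $n=5$ this finishes in $4$ rounds instead of $5$. Also, sources $u_i,u_j$ are at distance \emph{at least} $j-i+1$, not exactly: two consecutive legs are at distance $3$.
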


We can now start identifying centipedes as subgraphs to obtain bounds on more complex graphs. As an example, we can quickly find the cooling number of a well-known family of trees using this method.

\begin{theorem}\label{thm:CART:cool_regular_trees}
Let $d,h>1$. 
The complete $d$-ary tree $T$ of height $h$ is maximally cool, unless $d=h=2$, in which case $\CL(T) = \diam(T) = 2h$.  
\end{theorem}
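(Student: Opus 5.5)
The plan is to get the lower bound $\CL(T)\ge 2h+1=\diam(T)+1$ by finding a suitable maximally cool isometric subtree. The matching upper bound $\CL(T)\le\diam(T)+1$ comes from Theorem~\ref{thm:GENE:diam_bounds}. The key observation is that every subtree of a tree is an isometric subgraph, since paths in trees are unique. So by Lemma~\ref{lem:GENE:isometric_subgraph}, it suffices to exhibit inside $T$ a maximally cool tree of diameter $2h$. Fix a longest path $u_1,u_2,\dots,u_{2h+1}$ through the root $u_{h+1}$, joining two leaves in different branches of the root. Each $u_i$ with $2\le i\le 2h$ and $i\ne h+1$ is a non-root internal vertex. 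It therefore has $d+1\ge 3$ neighbours, so it has a neighbour $x_i$ off the path. The root has $d$ neighbours, two of which lie on the path.

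\emph{Case $d\ge 3$.} The root also has an off-path neighbour $x_{h+1}$. The subtree spanned by the path together with the leaves $x_2,\dots,x_{2h}$ is a full centipede of diameter $2h$. It is maximally cool by Theorem~\ref{thm:CART:cool_centipedes}, which gives $\CL(T)\ge 2h+1$.

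\emph{Case $d=2$, $h\ge 3$.} The root has no spare neighbour, so I would instead use the tree of Lemma~\ref{lem:CART:cooling_centipedes_with_lobster_claws} with $k=h$. Take the path above and the legs $x_i$ for $i\ne h+1$. The leg $x_h$ is a child of $u_h$ and lies at depth $2$. Since $h\ge 3$, $x_h$ is not a leaf of $T$, so it has two children $y_1,y_2$. This realizes the tree of that lemma as a subtree of $T$, hence $\CL(T)\ge 2h+1$.

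\emph{Case $d=h=2$.} Here $T$ has $7$ vertices and diameter $4$. For the lower bound $\CL(T)\ge 4$, take the spine of $5$ vertices with legs at $u_2,u_4$. This is a centipede with $n=5$, $p=2$, so Lemma~\ref{lem:CART:centipede_cooling} gives $\lfloor 9/2\rfloor=4$. For the upper bound I would argue directly via Lemma~\ref{lem:GENE:cooling_sequence_equation}. Lasting $5$ rounds needs sources $u_1,\dots,u_4$ and a vertex $w$ with $d(u_i,w)\ge 5-i$. Then $d(u_1,w)=4$ forces $u_1,w$ to be leaves in opposite branches. The condition $d(u_2,u_1)\ge2$ together with $d(u_2,w)\ge3$ forces $u_2$ to be the sibling leaf of $u_1$. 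Next, $d(u_3,u_1)\ge 3$ with $d(u_3,w)\ge 2$ forces $u_3$ to be the sibling $w'$ of $w$. Finally, $d(u_4,u_1)\ge 4$ forces $u_4\in\{w,w'\}$, which is impossible. Hence $\CL(T)=4$. This small case analysis is the only real obstacle; the key is to proceed source by source using the distance constraints to the uncooled vertex $w$.
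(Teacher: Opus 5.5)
Your proposal is correct and follows the paper's proof: a full centipede for $d\ge 3$, the tree of Lemma~\ref{lem:CART:cooling_centipedes_with_lobster_claws} with $k=h$ for $d=2$, $h\ge 3$, and the centipede formula of Lemma~\ref{lem:CART:centipede_cooling} for $d=h=2$. The one difference is that for $d=h=2$ your centipede (spine of $5$ vertices, legs at $u_2,u_4$) is all of $T$, so the lemma already gives $\CL(T)=4$ exactly, and your source-by-source upper-bound argument, though correct, is unnecessary.
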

\begin{proof}
If $d>2$, then any path from one leaf to another that contains the root is the spine of an induced full centipede. Since any subgraph of a tree is an isometric subgraph, the result for this case follows from Lemma~\ref{lem:GENE:isometric_subgraph} and Theorem~\ref{thm:CART:cool_centipedes} since $\diam(T)=2h$.  

If $d=2$ and $h=2$, then $T$ is a centipede with a spine of five vertices and with two leaves, and so Lemma~\ref{lem:CART:centipede_cooling} yields a cooling number of $4=\diam(T)$.

If $d=2$ and $h>2$, then we consider the subgraph containing a path $\vA_1,\vA_2,\ldots, \vA_{2h+1}$, along with a vertex $x_i$ adjacent to $\vA_i$ for each $2 \leq i \leq 2k$ and $i\neq k+1$, and with vertices $y_1$ and $y_2$ adjacent to $x_{k}$, noting that $\vA_{h+1}$ is the root of $T$ and $\vA_1$ and $\vA_{2k+1}$ are leaves.
By Lemmas~\ref{lem:GENE:isometric_subgraph} and \ref{lem:CART:cooling_centipedes_with_lobster_claws}, $T$ is maximally cool. 
\end{proof}

However, it is not just maximally cool trees whose Cartesian product becomes maximally cool. An \emph{alternating centipede} $T$ is a centipede with an odd number of spine vertices and a leg on every second spine vertex. 
By Lemma~\ref{lem:CART:centipede_cooling}, such a $T$ with $2k+1$ spine vertices has $\CL(T)=\left\lfloor \frac{3}{2}(k+1)\right\rfloor=\left\lfloor \frac{3}{4}(\diam(T)+2)\right\rfloor < \diam(T) +1$. 
However, the Cartesian product of two such trees is maximally cool. 

\begin{lemma}\label{lem:CART:alternating_centipedes}
If $G$ and $H$ are alternating centipedes, then $G\square H$ is maximally cool.
\end{lemma}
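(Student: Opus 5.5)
The plan is to exhibit an explicit cooling sequence of length $D_\square = \diam(G)+\diam(H)$ in $G\square H$ that leaves a fixed vertex uncooled at the end of round $D_\square$. This gives $\CL(G\square H)\ge D_\square+1$, and Theorem~\ref{thm:GENE:diam_bounds} gives the matching upper bound. Write the spine of $G$ as $u_1,\dots,u_{2k+1}$ with legs $a_i$ on the even spine vertices $u_{2i}$, and the spine of $H$ as $v_1,\dots,v_{2\ell+1}$ with legs $b_j$ on $v_{2j}$. Then $\diam(G)=2k$, $\diam(H)=2\ell$ and $D_\square=2k+2\ell$. If the other parity convention for the legs is intended, the same scheme should apply with shifted indices.

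The target vertex will be the corner $t=(u_{2k+1},v_{2\ell+1})$. Following the template of Lemma~\ref{lem:CART:cartesian_rectangular_grids}, I would choose the source $s_r$ cooled in round $r$ so that $d(s_r,t)= D_\square+1-r$. This guarantees $t$ is still uncooled after round $D_\square$. By Lemma~\ref{lem:GENE:cooling_sequence_equation}, it then remains to check $d(s_i,s_j)\ge j-i+1$ for $i<j$. The triangle inequality only gives $d(s_i,s_j)\ge d(s_i,t)-d(s_j,t)=j-i$, with equality exactly when $s_j$ lies on a geodesic from $s_i$ to $t$. So the real condition to verify is that \emph{no later source lies on a geodesic from an earlier source to $t$}. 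In a grid this is impossible to sustain for all rounds, which is why the paper loses the logarithmic terms there. The legs are what should fix this: a leg vertex lies on no geodesic to $t$ except as an endpoint, and a source with a leg in one coordinate is ``off to the side'' of any geodesic through the spine.

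Concretely, I would run two phases. In phase one, the $H$-coordinate stays near $v_1$, alternating between $v_1$ and the leg $b_1$ (together with $v_2$) as needed. Meanwhile the $G$-coordinate advances along the spine of $G$ in the pattern spine vertex, leg, spine vertex, leg, $\dots$, that is, $(u_1,\cdot),(a_1,\cdot),(u_3,\cdot),(a_2,\cdot),\dots$. The $H$-coordinate is toggled so that each step decreases the distance to $t$ by exactly one. Since $d(a_i,u_{2k+1})=d(u_{2i-1},u_{2k+1})$, switching to a leg $a_i$ alone does not lower the distance; the toggle in $H$ (from the leg $b_1$ back to the spine) is what supplies the decrease by one. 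In phase two, with the $G$-coordinate parked at or near $u_{2k+1}$ (or its last leg), the roles of $G$ and $H$ are exchanged and the $H$-coordinate is advanced along the spine of $H$ in the same way. I would first test this pattern by hand on $k=\ell=1$ (where $G=H=P_3$ plus one leg), and use the code mentioned in the introduction if necessary to pin down the exact toggling. Then I would write the general sequence by the pattern periodic in blocks of two rounds.

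The main obstacle is the verification that no later source lies on a geodesic from an earlier one to $t$, particularly for pairs straddling the two phases and for consecutive sources where one coordinate is a leg and the other moves along a spine. I would handle it by computing $d(s_i,s_j)=d_G(\cdot,\cdot)+d_H(\cdot,\cdot)$ coordinatewise. The key fact is that a leg-to-spine or leg-to-leg move in a centipede costs one more than the spine-index difference, which supplies the missing $+1$ over the triangle-inequality bound. Once pairwise separation is established, $t$ remains uncooled through round $D_\square$, and the result follows.
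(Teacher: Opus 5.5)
Your outer frame matches the paper: exhibit $D_\square=2k+2\ell$ sources that leave the corner $t=(u_{2k+1},v_{2\ell+1})$ uncooled, then use Theorem~\ref{thm:GENE:diam_bounds} for the upper bound. The gap is in the mechanism, which fails in two places.

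\emph{The exact template is infeasible.} Your reduction to ``no later source lies on a geodesic from an earlier source to $t$'' rests on requiring $d(s_r,t)=D_\square+1-r$ exactly. This cannot be met already for $G=H=K_{1,3}$, your own test case $k=\ell=1$ with $D_\square=4$. The template forces $s_4\in\{(u_2,v_3),(u_3,v_2)\}$ and $d(s_2,t)=3$. The only vertices at distance $3$ from $t$ are $(u_2,v_1)$, $(u_2,b_1)$, $(u_1,v_2)$ and $(a_1,v_2)$. Each of these is at distance $2$ from both candidates for $s_4$, but the cooling condition requires $d(s_2,s_4)\ge 3$.

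\emph{The phase-one pattern fails for every $k\ge 2$, even with slack.} Suppose you only ask for $d(s_r,t)\ge D_\square+1-r$. The sources in rounds $1,3,5$ have $G$-coordinates $u_1,u_3,u_5$. So each needs an $H$-coordinate at distance $2\ell$ from $v_{2\ell+1}$, i.e.\ in $\{v_1,b_1\}$. Since $d_G(u_1,u_3)=d_G(u_3,u_5)=2$ and $d_G(u_1,u_5)=4$, the cooling condition forces these three $H$-coordinates to be pairwise distinct. That is impossible in a two-element set.

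The missing idea is to let the $G$-coordinate stay on the same leg for two consecutive rounds. Meanwhile the $H$-coordinate toggles between $v_1$ and the leg $b_1$, not $v_2$. This uses $d_G(a_i,a_j)=2+2|i-j|$ and $d_H(v_1,b_1)=2$, and it accepts slack in the distances to $t$. This is the paper's sequence:
\begin{itemize}
\item $(u_1,v_1)$;
\item $(a_i,v_1),(a_i,b_1)$ for $i=1,\dots,k$;
\item $(u_{2k+1},b_1)$;
\item $(a_k,b_j),(u_{2k+1},b_j)$ for $j=2,\dots,\ell$.
\end{itemize}
These $2k+2\ell$ sources satisfy the cooling condition pairwise by a direct coordinatewise computation. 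They also satisfy $d(s_r,t)\ge D_\square+1-r$ with room to spare; for instance, in round $2i$ we have $d((a_i,v_1),t)=D_\square+2-2i$. Hence $t$ is still uncooled after round $D_\square$. As written, your proposal leaves the actual sequence to be found by hand or by computer, and the pattern it sketches cannot be completed.
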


\begin{proof} 
Let the spine vertices of $G$ be $\vA_1,\vA_2,\ldots, \vA_{2a+1}$ and its leaves be $x_{2i}$ for $1\le i \le a$, and let the spine vertices of $H$ be $\vB_1,\vB_2,\ldots, \vB_{2b+1}$ and its leaves $y_{2i}$ for $1\le i \le b$. 

We construct a cooling sequence of $2a+2b$ sources as follows. First, let $\vC_1=(\vA_1,\vB_1)$. 
For $1\leq i \leq a$, let $\vC_{2i}=(x_{2i},\vB_1)$ and $\vC_{2i+1}=(x_{2i},y_2)$. Let $\vC_{2a+2} =(\vA_{2a+1},y_2)$.
Finally, for $1\leq i \leq b-1$, we let $\vC_{2a+2i+1}=(x_{2a},y_{2i+2})$ and $\vC_{2a+2i+2}=(\vA_{2a+1},y_{2i+2})$. This sequence has a total of $2a+2b = \diam(G\square H)$ vertices. 

Noting that $d_G(\vA_1,x_{2i})=2i$, that $d_G(x_{2i},x_{2j})=2+2|j-i|$ when $i\neq j$, and that $d_G(x_{2i},\vA_{2a+1})=2(a+1-i)$ for $1\leq i\leq a$, with distances in $H$ calculated analogously, it is straightforward to show that $d_{G\square H}(s_i,s_j) \geq |j-i|+1$ for $1\leq i,j\leq 2a+2b$.

Finally, we note that $d_{G\square H}(s_i,(\vA_{2a+1},\vB_{2b+1}))\geq 2a+2b-i+1$ for $1\leq i \leq 2a+2b$. Therefore, the vertex $(\vA_{2a+1},\vB_{2b+1})$ is uncooled at the end of round $2a+2b$ and cools in round $2a+2b+1 = \diam(G\square H)+1$.
\end{proof}

Lemma \ref{lem:CART:alternating_centipedes} shows that Corollary~\ref{cor:CART:isometric_subtrees} does not provide a full characterization of maximally cool Cartesian product graphs. Indeed, product graphs can have close to maximal cooling number even if their factor graphs contain only trivial isometric subtrees.

Consider the complete graph $K_2$. It contains only $P_2$ as an isometric subtree, and the product $\mBox_{i=1}^nK_2$ contains only $P_n$ as an isometric subtree. Theorem~\ref{thm:CART:primary_cartesian_bounds} yields that $\CL(\mBox_{i=1}^nK_2) \geq n+1-2\log_2(\left\lfloor\frac{n}{2}\right\rfloor+2).$ However, $\CL(\mBox_{i=1}^n K_{2}) = n$ by \cite[Theorem 3]{liminal}. Instead of relying on the subgraph arguments used above, we can use other structural properties of Cartesian products of complete graphs to study their cooling numbers. 

\begin{theorem}\label{thm:CART:cartesian_multiproduct_completegraphs}
Let $d_1,d_2,\ldots,d_n \geq 2$ be integers. If $n\geq 4$, then
$$
\CL(\mBox_{i=1}^n K_{d_i}) = \begin{cases}
    n & \text{if }d_1,d_2,\ldots,d_{n-1} \leq 2\text{ and }d_n \in \{2,3,4\},\\
    n+1 & \text{otherwise}
\end{cases}
$$
and if $n\in \{2,3\}$, then
$$
\CL(\mBox_{i=1}^n K_{d_i}) = \begin{cases}
    n & \text{if }d_1,d_2,\ldots,d_{n-1} \leq 2\text{ and }d_n \in \{2,3\},\\
    n+1 & \text{otherwise.}
\end{cases}
$$
\end{theorem}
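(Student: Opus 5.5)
We work in $X=\mBox_{i=1}^n K_{d_i}$, viewed as the Hamming graph on words $(a_1,\dots,a_n)$ with $a_i\in[d_i]$. The distance is the number of differing coordinates and $\diam(X)=n$. Theorem~\ref{thm:GENE:diam_bounds} gives $\CL(X)\le n+1$, so two things remain: an upper bound $\CL(X)\le n$ in the exceptional cases, and a cooling sequence lasting $n+1$ rounds in all other cases.

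\emph{Upper bound in the exceptional cases.} If $d_1=\dots=d_n=2$, then $X$ is the hypercube and $\CL(X)=n$ by \cite[Theorem 3]{liminal}. Otherwise every factor except the last is $K_2$, and $d_n\in\{3,4\}$ (only $d_n=3$ when $n\le 3$). Vertices at distance $n$ from a vertex $v$ are the words differing from $v$ in every coordinate, so $|N_n[v]\setminus N_{n-1}[v]|=d_n-1\in\{2,3\}$. Theorem~\ref{thm:GENE:eccentricity} therefore does not apply, and we argue directly.

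Let $u_1$ be the first source. By the end of round $n$ the cooled set contains $N_{n-1}[u_1]$. The only vertices possibly still uncooled are the $d_n-1$ antipodes $w$ of $u_1$. These share coordinates $1,\dots,n-1$ and differ only in the last coordinate, so they form a clique. We must show that all but at most one of them are cooled by round $n$ through the other sources. Any later source $u_j$ has $d(u_1,u_j)\ge j$. If some source $u_j$ with $j\ge 2$ is close to the antipodal clique, it cools the clique quickly.

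The key step is a counting argument. To keep $k\ge 2$ antipodes uncooled through round $n$, every source $u_j$ must be at distance $\ge n-j+1$ from each of them. Since the antipodes differ only in the last coordinate, this is close to requiring $d(u_j,\bar u_1)\ge n-j+1$ in the hypercube $Q_{n-1}$ on the first $n-1$ coordinates, with the last coordinate of $u_j$ avoiding the relevant values. The constraints are $d(u_1,u_j)\ge j$ and distance $\ge n-j+1$ to the antipodal side. Together they force $u_j$ to lie almost exactly on a geodesic, in the ``projected'' hypercube, from $u_1$ to the antipode. I expect this to reduce to the known hypercube argument, with the last coordinate giving just enough slack for $d_n\le 4$ when $n\ge4$ (and $d_n\le 3$ when $n\le 3$), and to fail at $d_n=5$.

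I expect this to be the main obstacle. The plan is to mimic the proof of \cite[Theorem 3]{liminal} on the first $n-1$ coordinates. We then track in which rounds a source can take each of the $d_n$ possible last-coordinate values without forcing an antipode to cool. Small $n$ ($n=2,3$, and the borderline $d_n=4$ when $n=3$) should be verified by hand or by the computational code.

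\emph{Lower bound otherwise.} Here we exhibit explicit sequences of $n$ sources leaving a vertex uncooled at the end of round $n$. By Lemma~\ref{lem:GENE:cooling_sequence_equation} it suffices to check the cooling condition and that some vertex $w$ satisfies $d(u_j,w)\ge n-j+1$ for all $j$. Permuting coordinates, there are two cases.

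\textbf{Case 1:} some coordinate $i\le n-1$ has $d_i\ge 3$, i.e.\ two factors have size at least $3$. Start at $u_1=(1,\dots,1)$. Then use the two large coordinates to play the role of the full centipede leg. A source may change one large coordinate to $3$, a value shared with neither $u_1$ nor the target $w=(2,\dots,2)$. This buys an extra unit of distance, exactly as in Theorem~\ref{thm:CART:cartesian_lower_by_cooling_w_critical}. Inductively, we can use Theorem~\ref{thm:CART:cartesian_lower_by_cooling} or Theorem~\ref{thm:CART:cartesian_lower_by_cooling_w_critical}: $\CL(K_3\square K_3)=3$ (checked directly), together with $\CL(Q_{m})=m$, gives $\CL\ge (3)+m-?$. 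Since these theorems lose one, I would instead build the sequence directly, flipping coordinates one at a time toward $w$ and using value $3$ in the large coordinates as the detour.

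\textbf{Case 2:} $d_n\ge5$ (or $d_n\ge 4$ when $n\le3$). Here the extra values in the last coordinate allow sources that keep $n-1$ antipodes alive. We give the explicit sequence and verify the distances routinely.
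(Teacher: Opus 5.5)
Your outline has the right overall shape: the diameter bound, the hypercube case from \cite{liminal}, a forcing argument for the upper bound, and explicit sequences for the lower bound. However, both halves are missing their actual content, and part of what you set out to prove is false.

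\textbf{The upper bound.} For the exceptional cases you only say you ``expect'' a reduction to the hypercube argument. The missing argument, which is the paper's lemma on $Q_m\square K_d$ with $d\in\{3,4\}$ and $m=n-1$, is a short forcing chain. Put $u_1=\overline{0}$. For cooling to last $n+1$ rounds, two antipodes $(\overline{1}_m)(c)$ must survive the propagation step of round $n$. The source $u_{n-1}$ is at distance $\ge n-1$ from $u_1$ and $\ge 2$ from both survivors, so it must have exactly one $0$ among the binary coordinates and a nonzero last digit $e$. It therefore cools $(\overline{1}_m)(e)$.
\begin{itemize}
\item When $d=3$, this is already a contradiction for $n\ge 3$.
\item When $d=4$, the two survivors $(\overline{1}_m)(c)$ with $c\ne e$ force every $u_i$, $2\le i\le n-1$, to end in $e$ and to have exactly $i-1$ ones. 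The cooling condition then makes their binary supports an antichain with sizes $1,\dots,m-1$ in $[m]$. The singleton forces the $(m-1)$-set to be its complement, and that complement contains the $2$-set once $m\ge 4$.
\end{itemize}

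This argument is sharp, and your ``just enough slack for $d_n\le 4$ when $n\ge 4$'' is false at $n=4$. In $Q_3\square K_4$, the sources $0000,1001,0111,1112$ satisfy the cooling condition and leave $1113$ uncooled after round $4$, so $\CL(Q_3\square K_4)=5$. Likewise, in $K_2\square K_3$ the sources $00,11$ leave $12$ uncooled after round $2$, so $\CL(K_2\square K_3)=3$. This also follows from Theorem~\ref{thm:CART:cartesian_lower_by_cooling}, since $K_3$ is not cooling critical.

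So no completion of your plan can give $\CL\le n$ for $(d_i)=(2,2,2,4)$ or $(2,3)$. Apart from the hypercube, the exceptional configurations are correct only for $d_n=3$ with $n\ge 3$ and for $d_n=4$ with $n\ge 5$. The paper's own lemma is stated only for at least four binary coordinates, and its closing small-case computations do not cover these two graphs either.

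\textbf{The lower bound.} This half also lacks its content. Case~1 trails off; as you notice, Theorems~\ref{thm:CART:cartesian_lower_by_cooling} and~\ref{thm:CART:cartesian_lower_by_cooling_w_critical} lose a round. Case~2 only promises a sequence.

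The ingredient you are missing is the rainbow Sperner family of Lemmas~\ref{lem:rainbow_sperner_exists} and~\ref{lem:rainbow_sperner_symdif}. For $d_n\ge 5$, take:
\begin{itemize}
\item $u_1=\overline{0}$ and $u_2=(\overline{0}_{n-2})(1)(1)$;
\item $u_i=w^{(i-1)}(2)$ for $3\le i\le n-1$, where the sets $S_2,\dots,S_{n-2}$ have sizes $2,\dots,n-2$ and all contain the last binary coordinate except $S_{n-2}=[n-2]$.
\end{itemize}
Then $d(u_1,u_i)=i$, $d(u_2,u_i)\ge i-1$, and $d(u_i,u_j)=|S_{i-1}\Delta S_{j-1}|\ge |i-j|+2$. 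The last source is $(\overline{1}_{n-1})(3)$ and the survivor is $(\overline{1}_{n-1})(4)$, which is why this construction needs five values. Lemma~\ref{lem:GENE:isometric_subgraph} then reduces arbitrary $d_i$ to these minimal cases, and $n\le 3$ is handled by small computations.

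For two large factors a similar Sperner construction is used, but choose the survivor with care. The paper's $(\overline{1}_{n-2})(\overline{2}_2)$ is adjacent to its $s_{n-1}=(\overline{1}_{n-2})(0)(2)$, so it cools in round $n$. For $n=4$, for instance, the sources $0000,0021,0112,1111$ with survivor $1122$ do work.
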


We prove this result in three lemmas, each addressing different restrictions on the $d_i$. For notational convenience, we let $Q_n = \mBox_{i=1}^nK_2$. We label each vertex of $\mBox_{i=1}^n K_{d_i}$ with a word of length $n$, where the $i^{th}$ digit may take on values in $\{0,1,\dots,d_i-1\}$, such that two vertices are adjacent if and only if their corresponding labels differ in exactly one position. We write $(\overline{m}_i)$ to represent a word of length $i$ in which every digit is $m$, omitting the subscript when the length is clear from context. 
A single digit $m$ is written as $(m)$. Finally, we write $(\overline{m}_i)(\overline{\ell}_j)$ to denote the concatenation of words. We begin our classification of cliques with those whose products are not maximally cool.

\begin{lemma}
If $d \in \{3,4\}$ and $n \geq 4$, then
$
\CL(Q_n \square K_d) = \diam(Q_n \square K_d) = n+1.
$
\end{lemma}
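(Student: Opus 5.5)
The lower bound comes from an isometric subgraph, and the upper bound from a structural analysis of any sequence that would last $n+2$ rounds.

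For the lower bound, fix two symbols of $K_d$. The copy of $K_2$ they span is an isometric subgraph of $K_d$, so $Q_{n+1}=Q_n\square K_2$ is an isometric subgraph of $Q_n\square K_d$ by Lemma~\ref{lem:CART:isometric_subgraph_cartesian}. Since $\CL(Q_{n+1})=n+1$ by \cite[Theorem 3]{liminal}, Lemma~\ref{lem:GENE:isometric_subgraph} gives $\CL(Q_n\square K_d)\ge n+1$. The diameter is $n+1$, so Theorem~\ref{thm:GENE:diam_bounds} only gives $\CL\le n+2$. Theorem~\ref{thm:GENE:eccentricity} does not apply, because a vertex $(w,a)$ has the $d-1\ge 2$ antipodes $(\overline{w},b)$ with $b\neq a$. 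I therefore need a direct argument that no cooling process lasts $n+2$ rounds.

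Suppose sources $u_1,\dots$ make cooling last $n+2$ rounds. Then some vertex $z$ is still uncooled at the end of round $n+1$. By Lemma~\ref{lem:GENE:cooling_sequence_equation} this means $d(u_i,z)\ge n+2-i$ for every source $u_i$ with $i\le n+1$. Write $u_1=(w,a)$ and $u_i=(p_i,c_i)$.
\begin{itemize}
\item From $d(u_1,z)\ge n+1$ we get $z=(\overline{w},b)$ with $b\ne a$.
\item Since the $K_d$ coordinate contributes at most $1$, $d(u_i,z)\ge n+2-i$ gives $d_{Q_n}(p_i,w)\le i-1$.
\item The cooling condition $d(u_1,u_i)\ge i$ then forces $d_{Q_n}(p_i,w)=i-1$ and $c_i\ne a$, and equality in $d(u_i,z)\ge n+2-i$ forces $c_i\ne b$.
\end{itemize}
Thus every later source lies in the layer $\{x: d_{Q_n}(x,w)=i-1\}$ of its round and uses a $K_d$ symbol outside $\{a,b\}$. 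By hypercube parity, $d_{Q_n}(p_i,p_j)\equiv i-j \pmod 2$, so the cooling condition among $u_2,u_3,\dots$ sharpens to $d_{Q_n}(p_i,p_j)\ge |i-j|+2$ whenever $c_i=c_j$. This is where $d\le 4$ enters.
\begin{itemize}
\item For $d=3$, all of $u_2,\dots,u_{n+1}$ lie in the single copy $Q_n\times\{c\}$.
\item For $d=4$, if two distinct antipodes $(\overline{w},b),(\overline{w},b')$ survive to the propagation step of round $n+1$, the same argument puts all of $u_2,\dots,u_n$ in a single copy $Q_n\times\{c\}$.
\item If only one antipode survives, it must be consumed as the source $u_{n+1}$, which contradicts the process lasting $n+2$ rounds.
\end{itemize}

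The main obstacle is ruling out the reduced configuration in one copy of $Q_n$: a sequence $p_2,\dots$ with $p_i$ at distance exactly $i-1$ from $w$ and $d_{Q_n}(p_i,p_j)\ge|i-j|+2$, with a surviving antipode at the end. Pure distance constraints seem satisfiable, so the contradiction must come from maximality. At each round some vertex must still be available as a source, and the vertices $(\overline{w},c)$ and $(x,a),(x,b)$ for $x$ adjacent to $\overline{w}$ must already be cooled or be cooled as sources. My first attempt is a counting argument on the last two layers of $Q_n$, which have sizes $n$ and $1$, in the copies $a$, $b$ and $c$. I would aim to show that the one or two remaining sources cannot both respect the layer condition and keep an antipode uncooled after round $n+1$. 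The hypothesis $n\ge 4$ should be used here to exclude small exceptional configurations.
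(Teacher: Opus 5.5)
Your lower bound and your structural reduction are correct, and the reduction is essentially the paper's. The paper normalizes $s_1=(\overline{0})$, shows that two antipodes must survive the propagation step of round $n+1$, and forces every later $s_i$ onto a single $K_d$-symbol with exactly $i$ nonzero digits. However, the proposal stops at the decisive step, and your diagnosis of that step is wrong. You say the pure distance constraints ``seem satisfiable'' and that the contradiction must come from maximality. In fact the constraints you have already derived contradict each other, and no counting on the last layers is needed. As written, the upper bound $\CL(Q_n\square K_d)\le n+1$ is not proved; the closing paragraph is only a plan.

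For $d=3$ you were already done. Your own bullet places both $u_n=(p_n,c)$ and $u_{n+1}=(\overline{w},c)$ in $Q_n\times\{c\}$. Since $p_n$ is at distance $n-1$ from $w$, it is adjacent to $\overline{w}$, so $d(u_n,u_{n+1})=1<2$, which violates the cooling condition.

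For $d=4$, the two-antipode case is automatic. The source $u_{n+1}=(\overline{w},c_{n+1})$ is itself an antipode of $u_1$ distinct from $z$, so your third bullet is vacuous. Hence $u_2,\dots,u_n$ all lie in $Q_n\times\{c\}$ and satisfy $d_{Q_n}(p_i,p_j)\ge |i-j|+2$. Let $j$ be the unique coordinate in which $p_n$ agrees with $w$. Then:
\begin{enumerate}
\item $d_{Q_n}(p_2,p_n)\ge n$ forces $p_2$ to differ from $w$ exactly in coordinate $j$;
\item $d_{Q_n}(p_2,p_3)\ge 3$ forces the two coordinates in which $p_3$ differs from $w$ to avoid $j$;
\item consequently $d_{Q_n}(p_3,p_n)=n-3<n-1$, a contradiction.
\end{enumerate}
This is exactly the paper's closing argument with $s_2$, $s_3$, $s_n$. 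It is also where $n\ge 4$ enters, since it guarantees $u_3\neq u_n$.

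The hypothesis is genuinely needed. Label the symbols of $K_4$ as $a,b,b',c$ and take $n=3$. The sequence $(000,a),(100,c),(011,c),(111,b')$ in $Q_3\square K_4$ satisfies the cooling condition and leaves $(111,b)$ uncooled at the end of round $4$.
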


\begin{proof}
We prove the theorem only for $d=4$, as the proof for $d=3$ is analogous. Let $G=Q_n \square K_4$, and, toward a contradiction, assume that $\CL(G) = n+2.$ Consider an optimal cooling sequence $(\vC_1,\vC_2,\ldots,\vC_k)$, noting that since $G$ is vertex-transitive, we can let $\vC_1=(\overline{0})$ without loss of generality. As $\diam(G) = n+1$, there can be no source chosen in round $n+2$, so the length of any cooling sequence for $G$ is at most $n+1$. This also implies that $\CL(G) = n+2$ only if there are at least two uncooled vertices of distance $n+1$ from $\vC_1$ after the propagation step of round $n+1$. 
    
Therefore, at least two of $\{(\overline{1}), (\overline{1}_n)(2),(\overline{1}_n)(3)\}$ are uncooled after the propagation step of round $n+1$. This implies that $\vC_n \not \in \{(\overline{1}_n)(0),(\overline{1}), (\overline{1}_n)(2),(\overline{1}_n)(3)\}$, as otherwise all of these vertices would be cooled in round $n+1$, so $\vC_n$ must be a word of distance $n$ from $\vC_1$ ending in 1, 2, or 3. Without loss of generality, we assume that $\vC_n$ ends in 1, implying that $(\overline{1})$ is cooled during the propagation step in round $n+1$. In order to avoid $(\overline{1}_n)(2)$ and $(\overline{1}_n)(3)$ also being cooled during this propagation step, none of the $\vC_i$ may end in either $2$ or $3$. To see why, note that each $\vC_i$ has distance at least $|i-1|+1=i$ from $\vC_1$ and therefore has at least $i$ nonzero digits. If $\vC_i$ ends in 2, then it shares at least $i$ digits with $(\overline{1}_n)(2)$ and at least $i-1$ digits with $(\overline{1}_n)(3)$. Hence, $d(\vC_i,(\overline{1}_n)(2)) \leq n+1-i < |(n+1)-i|+1$, so $(\overline{1}_n)(2)$ cannot be a source in round $n+1$ or uncooled at the end of round $n+1$. A similar argument shows that no $\vC_i$ can end in 3. Therefore, every $\vC_i$ must end in 0 or 1.  

It must also be true that no $\vC_i$ ends in $0$. If so, then it would share exactly $i$ nonzero digits in common with both $(\overline{1}_n)(2)$ and $(\overline{1}_n)(3)$. Neither is a valid source in round $n+1$, so each $\vC_i$ must end in $1$. This also implies that each $\vC_i$ contains exactly $i$ nonzero digits, including the final $1$. Now, consider $\vC_n;$ it must be $1$ in every digit except some digit $j$, where it is $0$. Therefore, $\vC_2$ is zero everywhere except in digits $j$ and $n+1$, and so every other $\vC_i$ must not contain a 1 as digit $j$. However, this implies that $\vC_3$ is a binary word with $j^{th}$ digit is a 0, which has exactly three nonzero digits, and which shares at most three digits with $\vC_n$, which is a contradiction. Therefore, $\CL(G) \leq n+1.$ 

To derive equality, we note that the $(n+1)$-dimensional hypercube $Q_{n+1}$ is an isometric subgraph of $G$. By Lemma~\ref{lem:GENE:isometric_subgraph}, $\CL(G) \geq \CL(Q_{n+1})=n+1$. 
\end{proof}

In \cite{liminal}, a key tool for determining the cooling number of the hypercube $Q_n$ is a type of Sperner family. We say that a family $\mathcal{F}$ of subsets of $[n]$ is a $k$-\textit{rainbow Sperner family of cardinality} $j$ if the following conditions are satisfied:
\begin{enumerate}
\item $|\{S \in \mathcal{F}~:~|S| = i\}| = k$ for all $2 \leq i \leq j+1$.
\item For all $S,T \in \mathcal{F}$, $S \not\subseteq T$. 
\end{enumerate}
In the case where $k=1$, we say that $\mathcal{F}$ is a \textit{rainbow Sperner family of cardinality} $j$. Rainbow Sperner families of cardinality $n-2$ always exist and always satisfy the cooling condition in $Q_n$, as shown in \cite{liminal}. 

\begin{lemma}[{\cite[Lemma 1]{liminal}}]\label{lem:rainbow_sperner_exists}
    For each $n\geq 3$, there exists a rainbow Sperner family on $[n]$ of cardinality $n-2$. 
\end{lemma}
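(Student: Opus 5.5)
The plan is to give an explicit, inductive construction of a family $\mathcal{F}_n=\{A_2,A_3,\dots,A_{n-1}\}$ of subsets of $[n]$ with $|A_i|=i$ that forms an antichain. The base case $n=3$ is trivial, since $\mathcal{F}_3=\{\{1,2\}\}$ has one set of size $2$ and nothing to compare against. Small cases found by hand suggest the shape of the construction. For $n=4$ one can take $\{1,2\},\{1,3,4\}$. For $n=5$ one can take $\{1,2\},\{1,3,4\},\{2,3,4,5\}$. For $n=6$ one can take $\{1,2\},\{1,3,4\},\{1,3,5,6\},\{2,3,4,5,6\}$.

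In these examples the largest set is $[n]\setminus\{1\}$ and every smaller set contains $1$. This makes $[n]\setminus\{1\}$ automatically incomparable with every other member: it cannot contain any of them because it misses $1$, and it is too large to be contained in any of them. Deleting $1$ from the remaining sets leaves a family on $\{2,\dots,n\}$ with one set of each size $1,2,\dots,n-3$ that is still an antichain. So I would strengthen the statement and prove, by induction on $m$, the following claim.
\begin{quote}
\emph{For every $m\geq 2$ and every $t\leq m-2$, there is an antichain on an $m$-set with exactly one set of each size $1,\dots,t$.}
\end{quote}
Applying the claim with $m=n-1$ and $t=n-3$ and adjoining $1$ to each set gives sizes $2,\dots,n-2$. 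Adding $[n]\setminus\{1\}$ then supplies size $n-1$ and completes $\mathcal{F}_n$.

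For the strengthened claim, I would again peel off a distinguished element. Take the singleton $\{a\}$, and build the sets of sizes $2,\dots,t$ so that they avoid $a$. None of them contains $\{a\}$, and $\{a\}$, having size $1$, contains none of them. It then remains to find, on the $m-1$ elements other than $a$, an antichain with one set of each size $2,\dots,t$. At this point one wants to apply the same trick once more: fix an element $b$, let every set except the top one contain $b$, and make the top set avoid $b$. This reduces the problem to an instance of the same form on fewer elements.

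The main obstacle is the bookkeeping of sizes against available elements. Every peeling step uses up one element of the ground set but only lowers the largest required size by one. I would therefore first check carefully that the recursion $m\mapsto m-1$, $t\mapsto t-1$, together with the constraint $t\leq m-2$, is self-consistent. The $n=7$ attempt suggests that naive greedy choices can get stuck: extending $\{1,2\},\{1,3,4\},\{1,3,5,6\}$ by a $5$-set containing $1$ fails. So the inductive invariant must prescribe which elements the top set of each level omits, not merely its size.

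If this alternating recursion does not close up, I would fall back on the following complementation idea. The family $\{[n]\setminus A : A\in\mathcal{F}\}$ is again an antichain, with sizes $1,\dots,n-2$. So it suffices to build the sets of sizes $2,\dots,\lfloor n/2\rfloor$ and, separately, the complements of the large sets. This leaves only the pairs straddling the middle size to be checked by hand.
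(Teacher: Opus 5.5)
The paper gives no proof of this lemma; it quotes it from \cite{liminal}. So your argument has to stand on its own. Its core is correct, and the point you leave open, whether the alternating recursion closes, does close with no extra invariant.

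Composing your two peeling steps gives an induction from $n-2$ to $n$. For $n\ge 5$, let $\mathcal{G}$ be a rainbow Sperner family of cardinality $n-4$ on $[n]\setminus\{1,2\}$, and set
\[
\mathcal{F}_n=\bigl\{\{1,2\}\bigr\}\cup\bigl\{\{1\}\cup S : S\in\mathcal{G}\bigr\}\cup\bigl\{[n]\setminus\{1\}\bigr\}.
\]
This family is a rainbow Sperner family of cardinality $n-2$:
\begin{enumerate}
\item The sizes are $2$, then $3,\dots,n-2$, then $n-1$.
\item The top set misses $1$, which every other member contains, and it is strictly largest.
\item The set $\{1,2\}$ is strictly smallest and lies in no $\{1\}\cup S$, since $2\notin S$.
\item We have $\{1\}\cup S\subseteq\{1\}\cup S'$ only if $S\subseteq S'$.
\end{enumerate}
The step needs $n\ge 5$; at $n=3$ it would produce two $2$-sets. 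Your explicit cases $n=3,4$ are therefore exactly the base cases required.

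In the language of your strengthened claim, the two steps act as follows:
\begin{enumerate}
\item The singleton step turns sizes $1,\dots,t$ on $m$ points into sizes $2,\dots,t$ on $m-1$ points, without lowering $t$.
\item The $b$-step puts the size-$t$ set inside the $m-2$ points other than $a,b$. This is precisely where $t\le m-2$ is used. It leaves sizes $1,\dots,t-2$ on those same $m-2$ points.
\end{enumerate}
So the recursion is $(m,t)\mapsto(m-2,t-2)$, not a step-by-step $(m-1,t-1)$. It preserves $t\le m-2$ and terminates at $t\le 1$.

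Two things in the write-up should be corrected.
\begin{enumerate}
\item Your $n=7$ remark is false. The set $\{1,4,5,6,7\}$ contains $1$ and misses $2$ and $3$, so it extends $\{1,2\},\{1,3,4\},\{1,3,5,6\}$. Adding $\{2,\dots,7\}$ yields a valid family. This is exactly what the recursion above produces from $\{\{1,2\},\{1,3,4\},\{2,3,4,5\}\}$ relabelled onto $\{3,\dots,7\}$.
\item The complementation fallback should be dropped. As stated, it leaves the pairs straddling the middle size unchecked, so it is not a proof by itself.
\end{enumerate}
With the verification above written out in place of the hedging, your proposal is a complete proof.
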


\begin{lemma}[{\cite[Lemma 2]{liminal}}]\label{lem:rainbow_sperner_symdif}
    Let $S_2,S_3,\ldots,S_{n-1}$ be a rainbow Sperner family of cardinality $n-2$. For all $2 \leq i\neq j \leq n-1,$ we have $|S_i \Delta S_j| \geq |i-j|+2$. 
\end{lemma}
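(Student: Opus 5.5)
The plan is to compute the size of the symmetric difference directly from the set sizes, and to use the Sperner (antichain) condition to force each set to contribute at least one element. By symmetry we may assume $2 \leq i < j \leq n-1$, so $|i-j| = j-i$. Condition (1) with $k=1$ and the indexing $S_2,\ldots,S_{n-1}$ give $|S_i| = i$ and $|S_j| = j$. We then split the symmetric difference into its two disjoint parts,
\[
|S_i \Delta S_j| = |S_i \setminus S_j| + |S_j \setminus S_i|,
\]
and bound each part separately.

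For the first part, condition (2) says $S_i \not\subseteq S_j$, so $a := |S_i \setminus S_j| \geq 1$. For the second part, we write $|S_i \cap S_j| = |S_i| - a = i - a$. Hence
\[
|S_j \setminus S_i| = |S_j| - |S_i \cap S_j| = j - i + a.
\]
Substituting both into the split gives
\[
|S_i \Delta S_j| = a + (j - i + a) = (j-i) + 2a \geq |i-j| + 2,
\]
which is the claimed bound.

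We do not anticipate any real obstacle. The only point needing care is to use the non-containment in the direction $S_i \not\subseteq S_j$, with the smaller set on the left. The reverse non-containment $S_j \not\subseteq S_i$ is automatic from $|S_j| > |S_i|$ and carries no information. Using the exact sizes $|S_i| = i$ then converts that one element of $S_i$ outside $S_j$ into two units of the symmetric difference beyond $j-i$.
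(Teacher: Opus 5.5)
Your proof is correct and complete. The paper gives no argument of its own here, importing the lemma from \cite{liminal}, and your identity $|S_i \Delta S_j| = (j-i) + 2|S_i \setminus S_j|$ combined with $S_i \not\subseteq S_j$ is the natural, standard proof. The one hypothesis you use implicitly, that the subscript records the size ($|S_i| = i$), is the intended reading of the indexing, since under an arbitrary labelling of the $n-2$ sets the inequality can fail.
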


Now, we show that such a family of sets can be used to construct a cooling sequence for more general Cartesian products of complete graphs. We begin by showing that if at least two factors of $\mBox_{i=1}^n K_{d_i}$ are not $K_2$, then the product is maximally cool.

\begin{lemma}
If $n \geq 4$, $d_1,d_2,\ldots,d_n \geq 2$ are integers, and at least two of the $d_i$ are at least $3$, then 
$
\CL(\mBox_{i=1}^n K_{d_i}) = \diam(\mBox_{i=1}^n K_{d_i})+1 = n+1
$.
\end{lemma}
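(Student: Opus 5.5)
The upper bound $\CL(\mBox_{i=1}^n K_{d_i})\le n+1$ is immediate from Theorem~\ref{thm:GENE:diam_bounds}, since $\diam(\mBox_{i=1}^n K_{d_i})=n$. Distances are Hamming distances between the word labels. So the whole task is the lower bound: I want $n$ words $\vC_1,\dots,\vC_n$ satisfying the cooling condition of Lemma~\ref{lem:GENE:cooling_sequence_equation}, together with a target word $t$ such that $d(\vC_i,t)\ge n+1-i$ for all $i\le n$. Then $t$ is uncooled at the end of round $n$, and cooling lasts $n+1$ rounds.

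\textbf{Setup and construction.} Since permuting coordinates is an isomorphism, I assume $d_{n-1},d_n\ge 3$, so the digit $2$ is available in the last two coordinates. I take $t=(\overline{1}_{n-2})(2)(2)$, the natural analogue of the antipodal target in the proof for $Q_n\square K_d$. For the first $n-1$ sources I use a rainbow Sperner family $S_2,\dots,S_{n-1}$ on $[n]$ of cardinality $n-2$ (Lemma~\ref{lem:rainbow_sperner_exists}). I set $\vC_1=(\overline{0})$, and for $2\le i\le n-1$ I let $\vC_i$ be the binary indicator word of $S_i$. The cooling condition among these sources follows directly:
\begin{itemize}
\item $d(\vC_1,\vC_i)=|S_i|=i$;
\item for $2\le i\ne j\le n-1$, $d(\vC_i,\vC_j)=|S_i\Delta S_j|\ge |i-j|+2$ by Lemma~\ref{lem:rainbow_sperner_symdif}.
\end{itemize}
The last source $\vC_n$ will use a $2$ in one of the coordinates $n-1,n$. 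For instance, $\vC_n$ could agree with $t$ in all but one position, so that $d(\vC_n,t)=1$, while having many coordinates different from the earlier binary words. Using a digit $2$ in $\vC_n$ automatically increases its distance to every binary $\vC_i$ in that coordinate, which should give $d(\vC_i,\vC_n)\ge n-i+1$.

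\textbf{Target distances.} For binary $\vC_i$, the word $t$ differs from $\vC_i$ in both coordinates $n-1$ and $n$, and in every coordinate of $[n-2]\setminus S_i$. Hence
\[
d(\vC_i,t)=2+(n-2)-|S_i\cap[n-2]| .
\]
This is at least $n+1-i$ exactly when $|S_i\cap[n-2]|\le i-1$, that is, when $S_i$ meets $\{n-1,n\}$. So I need a rainbow Sperner family in which every $S_i$ contains $n-1$ or $n$.

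All $S_i$ cannot contain the same element. Deleting it would leave an antichain on $[n-1]$ with one set of each size $1,\dots,n-2$, and there the singleton forces the size-$(n-2)$ set to be the complement of that singleton. That complement contains every other set of the family, so the antichain condition fails once $n\ge4$. Thus both $n-1$ and $n$ must be used. I would build the family explicitly, say with the small sets containing $n$ and the large sets containing $n-1$, or adapt the construction from \cite{liminal} by relabeling so that the element left out of each set avoids $\{n-1,n\}$ appropriately.

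\textbf{Main obstacle.} The hard step is this explicit construction: a rainbow Sperner family with every member meeting $\{n-1,n\}$, compatible with a choice of $\vC_n$ (using digit $2$) at distance at least $n-i+1$ from each earlier source. If the family cannot be arranged this way, I would fall back on the following. Let one or two of the middle sources also use the digit $2$ in coordinate $n-1$ or $n$, and choose $t$ with a $2$ only in the coordinate where no source has a $2$. The symmetric-difference bounds of Lemma~\ref{lem:rainbow_sperner_symdif} only improve when a $1$ is replaced by a $2$ against binary words, so the cooling condition survives. What remains is checking the small case $n=4$ and the pairs involving $\vC_n$ by hand.
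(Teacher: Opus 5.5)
\textbf{The final source $s_n$ cannot be chosen.} Your upper bound is fine. So is your reduction: find $s_1,\dots,s_n$ obeying the cooling condition plus a target $t$ with $d(s_i,t)\ge n+1-i$. Your formula $d(s_i,t)=n-|S_i\cap[n-2]|$ is also correct. Making every $S_i$ meet $\{n-1,n\}$ is not the hard step: the element missing from $S_{n-1}$ lies in every other $S_i$, so relabel it $n$. The actual gap is $s_n$, which you leave unspecified and which does not exist for $n\ge 5$.

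Your construction must cover $d_1=\cdots=d_{n-2}=2$, $d_{n-1}=d_n=3$. There $d(s_1,s_n)\ge n$ forces every digit of $s_n$ to be nonzero, and $s_n\ne t$. So $s_n$ is one of $(\overline{1})$, $(\overline{1}_{n-2})(2)(1)$, $(\overline{1}_{n-2})(1)(2)$. For binary $s_i$ with $|S_i|=i$:
\begin{itemize}
\item $d(s_i,(\overline{1}))=n-i$, which is too small;
\item $d\bigl(s_i,(\overline{1}_{n-2})(2)(1)\bigr)$ equals $n-i+1$ if $n-1\in S_i$ and $n-i$ otherwise.
\end{itemize}
The digit $2$ only gains a coordinate against those $s_i$ with a $1$ in position $n-1$; a $0$ there already differed. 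So you would need every $S_i$ to contain $n-1$ (or, symmetrically, $n$). That is exactly the common-element configuration you proved impossible. Your exclusion argument needs $n\ge 5$. For $n=4$ your plan does work, e.g.\ $S_2=\{1,3\}$, $S_3=\{2,3,4\}$, $s_4=(1)(1)(2)(1)$.

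Changing $t$ does not help while $s_2,\dots,s_{n-1}$ all come from a rainbow Sperner family on $[n]$. Since $s_i$ has at most $n-i$ zeros, $d(s_i,t)\ge n-i+1$ needs a coordinate in $\{n-1,n\}$ where $s_i$ is nonzero and differs from $t$, and likewise for $s_n$. Since $t\ne s_n$ are both all-nonzero, one of them is $(\overline{1})$ (an outright contradiction) or ends in $(2)(1)$ or $(1)(2)$ (a common-element condition).

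\textbf{The fallback fails for the same reason.} Suppose all $2$'s among the sources lie in coordinate $n-1$ and $t=(\overline{1}_{n-1})(2)$.
\begin{itemize}
\item If $s_n=(\overline{1})$, every middle $s_i$ must carry a $2$ in coordinate $n-1$ and exactly $i-1$ ones elsewhere. That requires an antichain with one set of each size $1,\dots,n-2$ on $n-1$ points, which is impossible for $n\ge 5$.
\item If $s_n=(\overline{1}_{n-2})(2)(1)$, it is within distance $n-i$ of any $s_i$ with a $2$ in coordinate $n-1$. Binary $s_i$ would then need $\{n-1,n\}\subseteq S_i$ for all $i$, which forces $S_2\subseteq S_3$.
\end{itemize}

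\textbf{The missing idea.} Put the digit $2$ into the middle sources themselves, let the Sperner family live on only $n-1$ coordinates, and let $s_2$ use both $2$'s. Take
\begin{itemize}
\item $s_1=(\overline{0})$ and $s_2=(\overline{0}_{n-2})(2)(2)$;
\item $s_i=w^{(i-1)}(2)$ for $3\le i\le n-1$, where $S_2,\dots,S_{n-2}$ is any rainbow Sperner family on $[n-1]$;
\item $s_n=(\overline{1})$ and $t=(\overline{1}_{n-2})(2)(1)$.
\end{itemize}
Then:
\begin{itemize}
\item $d(s_1,s_i)=i$ for all $i$;
\item $d(s_2,s_i)\ge i-1$ for $3\le i\le n$;
\item $d(s_i,s_j)\ge j-i+2$ for $3\le i<j\le n-1$, by Lemma~\ref{lem:rainbow_sperner_symdif};
\item $d(s_i,s_n)=n-i+1$ for $3\le i\le n-1$;
\item $d(t,s_1)=n$, $d(t,s_2)=n-1$, $d(t,s_i)\ge n-i+1$ for $3\le i\le n-1$, and $d(t,s_n)=1$.
\end{itemize}
Hence cooling lasts $n+1$ rounds.

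This is the paper's construction with the last digits of $s_2$ and of the target interchanged, and the extra conditions on the family are no longer needed. As printed, the paper takes $s_2=(\overline{0}_{n-2})(2)(1)$ and $s_\infty=(\overline{1}_{n-2})(2)(2)$ with $S_{n-2}=[n-2]$. That $s_\infty$ is adjacent to $s_{n-1}=(\overline{1}_{n-2})(0)(2)$, so it is cooled in round $n$, and the printed proof needs exactly this correction.
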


\begin{proof}
Without loss of generality, let $d_{n-1},d_n\geq 3$. We construct a sequence of sources for which cooling lasts $n+1$ rounds. Let $\{S_2,S_3,\ldots,S_{n-2}\}$ be a rainbow Sperner family on $[n-1]$ satisfying the following conditions:
\begin{enumerate}
    \item $|S_i| = i$, for all $2 \leq i \leq n-2$,
    \item $n-1 \in S_i$, for all $2 \leq i \leq n-3$, and 
    \item $S_{n-2} = [n-2].$
\end{enumerate} 
Such a family exists by Lemma~\ref{lem:rainbow_sperner_exists}. Each $S_i$ corresponds to a binary word given by $w^{(i)} = w_1^{(i)}w_2^{(i)}\cdots w_{n-1}^{(i)}$ where
$$
w_j^{(i)} = \begin{cases}
1 & \text{if } j \in S_i , \\
0 & \text{if }j \not \in S_i .
\end{cases}
$$
We define a cooling sequence $(\vC_1,\vC_2,\ldots,\vC_{n})$ as follows. Let $\vC_1 = (\overline{0})$, $\vC_2=(\overline{0}_{n-2})(2)(1)$, and $\vC_i = w^{(i-1)}(2)$ for all $3 \leq i \leq n-1$. By Lemma~\ref{lem:rainbow_sperner_symdif}, $d(\vC_i,\vC_j) \geq |i-j|+1$ for $3 \leq i < j \leq n-1$. For $3\leq i \leq n-1$, we have that $d(\vC_1,\vC_i) = i \geq |i-1|+1$ and $d(\vC_2,\vC_i) = i \geq |i-2|+1$. Finally, as $d(\vC_1,\vC_2) = 2$, we see that $(\vC_1,\vC_2,\ldots,\vC_{n-1})$ is a valid sequence of sources. 

Now, we identify a valid source in round $n$ and a vertex that is uncooled at the end of round $n$. 
We have that $d((\overline{1}),\vC_1) = n$, that $d((\overline{1}),\vC_2)= n-1 = |n-2|+1,$ and that $d((\overline{1}),\vC_i) = |n-i| +1$, for $3\leq i\leq n-1$. Therefore, $\vC_{n} = (\overline{1})$ is a valid source in round $n$ and we let $\vC_n = (\overline{1})$. Now, consider the vertex $s_\infty = (\overline{1}_{n-2})(\overline{2}_2)$. A similar argument, using the condition that $n-1\in S_i$ for all $2\leq i \leq n-3$, shows that $d(\vC_\infty,s_i) \geq |n-i|+1$ for $1\leq i \leq n-1$. Finally, we have that $d(\vC_n,\vC_\infty)=2$. Therefore, $\vC_\infty$ is uncooled at the end of round $n$, and we have that $\CL(\mBox_{i=1}^n K_{d_i}) = n+1.$
\end{proof}

Our final result below shows that if even one of the factors of $\mBox_{i=1}^n K_{d_i}$ is large enough, then the product is maximally cool.

\begin{lemma}
If $n \geq 3$ and $m \geq 5$, then 
$
\CL(Q_n \square K_m) = \diam(Q_n \square K_m)+1 = n+2.
$
\end{lemma}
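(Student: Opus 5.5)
The upper bound $\CL(Q_n\square K_m)\le \diam(Q_n\square K_m)+1=n+2$ is immediate from Theorem~\ref{thm:GENE:diam_bounds}, so all the work is in the lower bound. For that I will exhibit a valid sequence of sources $(\vC_1,\ldots,\vC_{n+1})$ together with a vertex $\vC_\infty$ that is still uncooled at the end of round $n+1$. By Lemma~\ref{lem:GENE:cooling_sequence_equation} this amounts to two conditions:
\[
d(\vC_i,\vC_j)\ge |i-j|+1 \quad\text{and}\quad d(\vC_i,\vC_\infty)\ge n+2-i .
\]
I use the word labelling introduced before Theorem~\ref{thm:CART:cartesian_multiproduct_completegraphs}: $n$ binary digits followed by one digit in $\{0,\ldots,m-1\}$. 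For words $a=w(x)$ and $b=w'(y)$,
\[
d(a,b)=|S_w\,\Delta\, S_{w'}|+[x\neq y],
\]
where $S_w\subseteq[n]$ is the support of the binary part $w$.

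The choice of endpoints is forced. I take $\vC_1=(\overline{0})$, $\vC_{n+1}=(\overline{1}_n)(3)$ and $\vC_\infty=(\overline{1}_n)(4)$, so that $d(\vC_1,\vC_\infty)=n+1$ and $d(\vC_{n+1},\vC_\infty)=1$, which is exactly what the two conditions require. Now consider a middle source $\vC_i=w(c_i)$ with $c_i\notin\{3,4\}$. The first condition applied to $\vC_1$ needs $|S_w|+[c_i\ne 0]\ge i$. The second condition applied to $\vC_\infty$ needs $n-|S_w|+1\ge n+2-i$, that is, $|S_w|\le i-1$. These two together force $|S_w|=i-1$ and $c_i\neq 0$.

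So for $2\le i\le n$ I will take $\vC_i=w^{(i)}(c_i)$ with $|S_{w^{(i)}}|=i-1$ and $c_i\in\{1,2\}$. This is the place where $m\ge 5$ is used: the last coordinate takes the five values $0,1,2,3,4$, namely $0$ for the start, $1,2$ for the middle, and $3,4$ for the two final vertices. That matches the earlier lemma, which shows $m\in\{3,4\}$ fails. With these choices every distance to $\vC_1$, $\vC_{n+1}$ and $\vC_\infty$ checks out automatically (for instance $d(\vC_i,\vC_{n+1})=n-i+2$). What remains is the pairwise condition for $2\le i<j\le n$:
\[
|S_i\,\Delta\, S_j|+[c_i\neq c_j]\ge j-i+1, \qquad |S_i|=i-1 .
\]

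The main obstacle is constructing this set system. A full antichain with sizes $1,\ldots,n-1$ on $[n]$ does not exist, so some containments are unavoidable. A pair with $S_i\subset S_j$ has $|S_i\Delta S_j|=j-i$, and then it must be repaired by $c_i\ne c_j$. A non-nested pair already has $|S_i\Delta S_j|\ge j-i+2$, as in Lemma~\ref{lem:rainbow_sperner_symdif}.

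My first attempt will be the following. Take $S_2=\{n\}$. Choose $S_3,\ldots,S_{n-1}$ as a rainbow Sperner family on $[n-1]$ of sizes $2,\ldots,n-2$, shifted, which exists by Lemma~\ref{lem:rainbow_sperner_exists}; none of these contains $n$, so $S_2$ is incomparable with all of them. Finally put $S_n=[n-1]$, which contains all of $S_3,\ldots,S_{n-1}$. Then set $c_n=2$ and $c_i=1$ for every other $i$. The only containments are $S_i\subset S_n$ for $3\le i\le n-1$, and each is repaired because $c_i\neq c_n$. The pair $(S_2,S_n)$ has symmetric difference $n=(n-2)+2$. Every other pair is a non-nested pair and gains the extra $2$.

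If this bookkeeping breaks for small $n$, and $n=3$ needs a direct check since the rainbow family is then empty, I will verify that case by hand or cite the computations. That completes the proof that cooling lasts $n+2$ rounds.
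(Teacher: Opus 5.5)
Your proposal is correct and follows essentially the same route as the paper: the same endpoints $(\overline{0})$, $(\overline{1}_n)(3)$, $(\overline{1}_n)(4)$, middle sources of binary weight $i-1$ with last digit in $\{1,2\}$, and a rainbow Sperner family in which the one unavoidable set of containments is repaired by a differing last digit. The only difference is mirror-image bookkeeping. The paper puts $n$ into every middle set and repairs the containments $\{n\}\subset S$ through $\vC_2$'s last digit $1$, whereas you keep $n$ out of the middle sets and repair the containments $S\subset[n-1]$ through $\vC_n$'s last digit $2$; your construction also covers $n=3$ vacuously, so no separate check is needed.
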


\begin{proof}
We construct a sequence of sources for which cooling lasts $n+2$ rounds. Let $\{S_2,S_3,\ldots,S_{n-1}\}$ be a rainbow Sperner family on $[n]$  satisfying the following conditions:
\begin{enumerate}
    \item $|S_i| = i$ for all $2 \leq i \leq n-1$,
    \item $n \in S_j$ for all $2 \leq j \leq n-2$, and 
    \item $S_{n-1} = [n-1].$
\end{enumerate}  
Such a family exists by Lemma~\ref{lem:rainbow_sperner_exists}. Each $S_i$ corresponds to a binary word $w^{(i)} = w_1^{(i)}w_2^{(i)}\cdots w_n^{(i)},$ where
$$
w_j^{(i)} = \begin{cases}
1 & \text{if }j \in S_i, \\
0 & \text{if }j \not \in S_i.
\end{cases}
$$

We define a cooling sequence $(\vC_1,\vC_2,\ldots,\vC_{n})$ as follows. Let $\vC_1 = (\overline{0})$, $\vC_2=(\overline{0}_{n-1})(1_2)$, and let $\vC_i = w^{(i-1)}(2)$ for all $3 \leq i \leq n$. By Lemma~\ref{lem:rainbow_sperner_symdif}, $d(\vC_i,\vC_j) \geq |i-j|+1$ for $3 \leq i < j \leq n$. For $3\leq i \leq n$, we have that $d(\vC_1,\vC_i) = i = |i-1|+1$ and $d(\vC_2,\vC_i) = i-1 = |i-2|+1$. Finally, as $d(\vC_1,\vC_2) = 2$, we see that $(\vC_1,\vC_2,\ldots,\vC_n)$ is a valid sequence of sources. 

Now, we identify a valid source in round $n+1$ and a vertex that is uncooled at the end of round $n+1$. 
We have that $d((\overline{1}_n)(3),\vC_1) = n+1$, $d((\overline{1}_n)(3),\vC_2)= n \geq |n+1-2|+1,$ and that $d((\overline{1}_n)(3),\vC_i) = n+2-i \geq |n+1-i|+1.$  
Therefore, $\vC_{n+1}=(\overline{1}_n)(3)$ is a valid source in round $n+1$. A similar argument shows that $d((\overline{1}_n)(4),\vC_i)\geq |n+1-i|+1$ for $1\leq i \leq n$. Finally, we have that $d((\overline{1}_n)(4),\vC_{n+1}) = 1$.  Therefore, $\vC_\infty = (\overline{1}_n)(4)$ is uncooled at the end of round $n+1$, and we have that $\CL(Q_n \square K_m) = n+2.$
\end{proof}

Finally, by calculating that $\CL(K_3 \square K_3) = 3$, that $\CL(K_2\square K_3 \square K_3) = 4$, and that $\CL(Q_2 \square K_5)=4$ and applying Lemma~\ref{lem:GENE:isometric_subgraph}, we obtain the result in Theorem~\ref{thm:CART:cartesian_multiproduct_completegraphs}.

We have now seen that factor graphs with only the trivial $P_2$ as isometric subtrees can result in product graphs that are maximally cool. In fact, these are exactly the diameter length subpaths we considered in Theorem~\ref{thm:CART:primary_cartesian_bounds}. 
Next, we consider multi-dimensional Cartesian grids $\mBox_{i=1}^{n}P_k$. We label each vertex of $\mBox_{i=1}^{n}P_k$ with a word of length $n$, where the $i^{th}$ digit may take on values in $\{0,1,\dots,k\}$. In this case, labels are assigned in such a way that adjacent vertices receive labels $u$ and $v$ satisfying $\sum_{i=1}^{n}|(u)_i-(v)_i|=1$, where $(u)_i$ denotes the $i^{th}$ digit of the word $u$. It follows that the distance between vertices corresponding to words $u$ and $v$ is given by $d(u,v) = \sum_{i=1}^{n}|(u)_i-(v)_i|$. 

Now, by applying Theorem~\ref{thm:CART:primary_cartesian_bounds} with $G=\mBox_{i=1}^{\lfloor n/2\rfloor}P_k$ and $H=\mBox_{i=1}^{\lceil n/2\rceil}P_k$, we are able to derive the following lower bound.

\begin{corollary}\label{thm:CART:other_cartesian_path_product}
If $k \geq 3$ and $n \geq 2$ are integers, then 
\[\CL(\mBox_{i=1}^{n}P_k) \geq (k-1)n-1-2\left\lfloor\log_2\left((k-1)\left\lfloor \frac{n}{2}\right\rfloor+2\right)-1\right\rfloor.\]   
\end{corollary}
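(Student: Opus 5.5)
The plan is to apply Theorem~\ref{thm:CART:primary_cartesian_bounds} with $G=\mBox_{i=1}^{\lfloor n/2\rfloor}P_k$ and $H=\mBox_{i=1}^{\lceil n/2\rceil}P_k$, so that $G\square H\cong \mBox_{i=1}^{n}P_k$. Distances in a Cartesian product add, so $D_G=(k-1)\lfloor n/2\rfloor$, $D_H=(k-1)\lceil n/2\rceil$, and $D_\square=(k-1)n$. In particular $D_G\le D_H$, as the theorem requires. Substituting gives
\[
\CL\Bigl(\mBox_{i=1}^{n}P_k\Bigr)\ \ge\ (k-1)n+1-2\left\lfloor\log_2\left((k-1)\left\lfloor \tfrac{n}{2}\right\rfloor+2\right)\right\rfloor-t,
\qquad t=\left\lfloor \frac{D_H-D_G+2}{D_G+2}\right\rfloor .
\]

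Next I would evaluate $t$. If $n$ is even, then $D_H=D_G$ and $t=\lfloor 2/(D_G+2)\rfloor=0$. If $n$ is odd, then $D_H-D_G=k-1$, so $t=\lfloor (k+1)/((k-1)\lfloor n/2\rfloor+2)\rfloor$. For $n\ge5$ we have $\lfloor n/2\rfloor\ge2$, so the denominator is at least $2k$, which exceeds $k+1$ because $k\ge3$; hence $t=0$. For $n=3$ the denominator equals $k+1$, so $t=1$. Finally, using $\lfloor x-1\rfloor=\lfloor x\rfloor-1$, the stated bound $(k-1)n-1-2\lfloor\log_2(\cdot)-1\rfloor$ equals $(k-1)n+1-2\lfloor\log_2(\cdot)\rfloor$. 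So for every $n\ge2$ with $n\neq3$, the corollary follows immediately.

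The main obstacle is $n=3$, where the substitution comes out one short. I would first try to recover the missing round by a different choice of isometric subgraph, via Lemma~\ref{lem:CART:isometric_subgraph_cartesian} together with Lemma~\ref{lem:CART:cartesian_rectangular_grids}. Taking a diameter path of $P_k\square P_k$ gives an isometric $P_k\square P_{2k-1}$, but this yields the same bound, so it does not help. Failing that, I would go back into the phase construction in the proof of Lemma~\ref{lem:CART:cartesian_rectangular_grids}, applied directly to $P_k\square P_k\square P_k$. The idea is that when $n-m+2=m+1$ exactly, the single extra middle phase can be merged into the final descending phases by using the third coordinate, which gains the one round that is lost. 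If that also fails, I would state the $n=3$ case with the additive $-1$ and note that it is covered by Theorem~\ref{thm:CART:primary_cartesian_bounds} directly.
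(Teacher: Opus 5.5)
Your reduction is exactly the paper's. The corollary is obtained there by the same substitution $G=\mBox_{i=1}^{\lfloor n/2\rfloor}P_k$, $H=\mBox_{i=1}^{\lceil n/2\rceil}P_k$ into Theorem~\ref{thm:CART:primary_cartesian_bounds}, and your bookkeeping is correct. The stated right-hand side equals $(k-1)n+1-2\lfloor\log_2(D_G+2)\rfloor$. That is the theorem's bound with the final term $t=\lfloor (D_H-D_G+2)/(D_G+2)\rfloor$ deleted. As you show, $t=0$ for every $n\ge 2$ except $n=3$, where $t=\lfloor (k+1)/(k+1)\rfloor=1$. So for $n\neq 3$ your argument is complete. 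The paper's one-line derivation never accounts for $t$, so it has exactly the hole you found. For instance, at $n=k=3$ the theorem gives $7-4-1=2$, while the corollary asserts $3$. The statement itself holds there, since $\CL(\mBox_{i=1}^3P_3)=5$, but not by this substitution.

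The genuine gap in your proposal is that you do not prove the $n=3$ case, namely $\CL(\mBox_{i=1}^3P_k)\ge 3k-2-2\lfloor\log_2(k+1)\rfloor$. Some values can be patched with results already in the paper:
\begin{itemize}
\item Theorem~\ref{thm:CART:improved_cartesian_path_product} gives $2k-1$, which suffices for $k\in\{3,4,5,7\}$.
\item Lemma~\ref{lem:CART:cartesian_rectangular_grids} on the isometric subgrid $P_{k-1}\square P_{2k-1}$ gives $3k-4-2\lfloor\log_2 k\rfloor$, which suffices whenever $k+1$ is a power of $2$.
\item $k=6$ follows from the computed value $\CL(P_6\square P_{10})=12$.
\end{itemize}
But for $k=8$ the target is $16$, while all of these, and Theorem~\ref{thm:CART:primary_cartesian_bounds} itself, give at most $15$. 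So a genuinely new argument is needed.

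Your idea of absorbing the extra middle phase into the descending phases via the third coordinate is plausible, but as written it is a hope rather than a proof. To make it one, you would have to:
\begin{itemize}
\item write down explicit sources in $\mBox_{i=1}^3P_k$;
\item verify the condition of Lemma~\ref{lem:GENE:cooling_sequence_equation} for all pairs;
\item exhibit a vertex that is still uncooled at the end.
\end{itemize}
Your fallback of carrying an extra $-1$ at $n=3$ proves a weaker statement than the one claimed. Without a new three-dimensional construction, that amended form is all the substitution establishes, whether yours or the paper's. Equivalently, you may keep the $t$ term, or exclude $n=3$.
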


However, there is once again an emergent structure that we can exploit if $n$ is much larger than $k$. Indeed, for $n =\Omega(2^k)$, we can improve the bound of Corollary~\ref{thm:CART:other_cartesian_path_product}. 

\begin{theorem}\label{thm:CART:improved_cartesian_path_product}
If $k \geq 3$ and $n \geq 2$, then 
$\CL(\mBox_{i=1}^{n}P_k) \geq (k-1)(n-1)+1$.
\end{theorem}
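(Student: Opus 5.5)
The plan is to work with the word labelling of $\mBox_{i=1}^{n}P_k$ introduced above, with digits in $\{0,1,\dots,k-1\}$. I would build an explicit sequence of $R=(k-1)(n-1)$ sources satisfying the cooling condition of Lemma~\ref{lem:GENE:cooling_sequence_equation}, together with a vertex that is still uncooled at the end of round $R$. That gives $\CL\ge R+1$.

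The structural tool is the coordinatewise partial order on words, with rank $L(w)=\sum_i (w)_i$. Since $d(u,v)=\sum_i|(u)_i-(v)_i|$:
\begin{itemize}
\item if $u$ and $v$ are comparable, then $d(u,v)=|L(u)-L(v)|$;
\item if $u$ and $v$ are incomparable, then $d(u,v)>|L(u)-L(v)|$, and by parity $d(u,v)\ge |L(u)-L(v)|+2$.
\end{itemize}
So it suffices to find an antichain $s_1,\dots,s_R$ whose ranks $L_i=L(s_i)$ satisfy $|L_i-L_j|\ge |i-j|-1$ for all $i,j$. The cooling condition $d(s_i,s_j)\ge |i-j|+1$ then follows at once. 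For instance, nondecreasing ranks with each value repeated at most twice, consecutively, would do. This is the grid analogue of the rainbow Sperner families of Lemmas~\ref{lem:rainbow_sperner_exists} and \ref{lem:rainbow_sperner_symdif}: there, symmetric difference plays the role of distance and ``not a subset'' plays the role of incomparability.

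For the final round, take $t=(\overline{k-1})$, the top element. Every $s_i\le t$, so $d(s_i,t)=(k-1)n-L_i$. This is at least $R+1-i$ provided $L_i\le i+(k-2)$. Since $t$ is never a source, it remains uncooled at the end of round $R$, and cooling lasts at least $R+1$ rounds. So I would aim for ranks $L_i\in\{i-1,i,\dots,i+k-2\}$ roughly tracking $i$, which leaves some slack.

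The main obstacle is constructing the antichain itself, since the ranks must sweep through almost the entire range. A naive choice fails. Taking $s_1=e_1$ forces every later word to have first digit $0$, which confines the rest to $n-1$ coordinates, where the top rank is a single comparable element. I would instead build the sequence inductively on $n$, one block of $k-1$ sources per added coordinate, mimicking the phase structure in the proof of Lemma~\ref{lem:CART:cartesian_rectangular_grids}.

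Within a block, a new coordinate $c$ is raised $1,2,\dots,k-1$ while compensating digits in earlier coordinates are lowered. This keeps each new word incomparable with all earlier ones, because each earlier word has a larger digit somewhere else. The permitted rank repetitions and the $k-2$ slack in $L_i$ are used at the block boundaries. I would first check the construction by hand for $k=3$ and small $n$, then verify incomparability across blocks in general. If the exact count $(k-1)(n-1)$ cannot be reached this way, the fallback is to weaken the requirement for pairs that are far apart in the sequence. For those pairs, comparability is allowed as long as $|L_i-L_j|\ge |i-j|+1$.
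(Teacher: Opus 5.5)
\textbf{Genuine gap: no sequence is ever constructed, and the antichain you aim for does not exist once $k\ge 6$.}

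Your reduction is correct as far as it goes. For words $u,v$ one always has $d(u,v)\ge |L(u)-L(v)|$, with equality exactly when $u,v$ are comparable and $d(u,v)\ge |L(u)-L(v)|+2$ otherwise. Your check that $\overline{k-1}$ is still uncooled after round $R$ when $L_i\le i+k-2$ is also right. But the proposal stops exactly where the proof has to begin.

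\emph{The case $n=2$.} Take any $k-1$ pairwise incomparable words in $\{0,\dots,k-1\}^2$ and sort them by first digit. Then $a_1<\dots<a_{k-1}$ and $b_1>\dots>b_{k-1}$, so $a_i\in\{i-1,i\}$ and $b_i\in\{k-1-i,k-i\}$. Hence every rank lies in $\{k-2,k-1,k\}$. Your condition with $i=1$, $j=R=k-1$ demands $|L_1-L_R|\ge k-3$, which is impossible once $k\ge 6$ (already for $P_6\square P_6$).

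\emph{The case $n\ge 3$, $k\ge 6$.} A similar count kills the plan. Your conditions force $L_R-L_1\ge R-2$, so $s_1$ and $\overline{k-1}-s_R$ have total rank at most $k+1$. Incomparability with both $s_1$ and $s_R$ then pins one coordinate of every $s_2,\dots,s_{R-1}$ to a common value. Repeating the count for $s_2,s_{R-1}$ (using $L_2\le k$) leaves total at most $4<k$ in the remaining coordinates, which forces $s_2\le s_{R-1}$.

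So the global-antichain route, and the coordinate-by-coordinate induction built on it, cannot reach $(k-1)(n-1)$ sources for large $k$. Hand checks at $k=3$ would not reveal this. Your sample rank pattern is also wrong: nondecreasing ranks such as $1,1,2,2$ violate $|L_1-L_4|\ge 2$. With unit steps, only one repeat can occur in the entire sequence.

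\textbf{What the paper does instead.} The missing ingredient is an explicit sequence, and the paper's is deliberately not an antichain.
\begin{itemize}
\item It starts at the bottom word $s_1=\overline{0}$, which is comparable to everything.
\item It then uses $s_i=(\overline{0}_{n+1-i})(2)(\overline{1}_{i-2})$ for $2\le i\le n-1$, and $s_n=(\overline{1})$.
\item For each $0\le x\le k-3$ and $1\le y\le n-1$, the word $s_{n+(n-1)x+y}$ has digit $x+2$ in positions $1,\dots,y+1$, digit $x$ in position $y+2$, and $x+1$ afterwards. Equivalently, its $j$th digit is $x+2-(s_{n-y})_j$.
\end{itemize}
The last source is the top word $\overline{k-1}$. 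So $(k-1)(n-1)+1$ sources appear directly and no leftover uncooled vertex is needed.

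In your language, it works because the surplus $L(s_i)-i$ is constant on blocks of consecutive sources and rises by one from each block to the next. Sources in different blocks then satisfy $d(s_i,s_j)\ge|L_i-L_j|\ge|i-j|+1$, whether or not they are comparable. Inside a block the ranks are consecutive and the words are pairwise incomparable, so your parity bound finishes the argument.

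Your closing fallback points toward this mechanism, but as stated it only restates the cooling condition for comparable pairs. Without a concrete family of words like this one, the proposal does not prove the theorem.
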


\begin{proof}
We construct a sequence of sources $(\vC_1,\vC_2,\ldots,\vC_{(k-1)(n-1)+1})$ of length $(k-1)(n-1)+1$ in $G=\mBox_{i=1}^{n}P_k$ as follows.
Let $\vC_1 = (\overline{0})$ and, for $2 \leq i \leq n$, let 
$$
\vC_i = \begin{cases}
(\overline{0}_{n+1-i})(2)(\overline{1}_{i-2}) & \text{if } 2 \leq i \leq n-1, \\
(\overline{1}) & \text{if }i = n.
\end{cases}
$$
When $i = n+(n-1)x+y$ for $0 \leq x \leq k-3$ and $1 \leq y \leq n-1$, we define $s_i$ as 
$$
(s_i)_j = \begin{cases}
x+2 & \text{if } 1 \leq j \leq y+1, \\
x & \text{if }j = y+2, \\
x+1 & \text{if }y+3 \leq j \leq n.
\end{cases}
$$
We now show that each of these is a valid source by computing the distance between pairs of sources. We consider each of the following five cases separately.
    
\begin{enumerate}
\item  $d(\vC_n,\vC_i)$ for $1 \leq i \leq n-1$. 
        
Here, we have $d(\vC_n,\vC_i) = \sum_{j=1}^{n}|(\vC_i)_j-1| \geq \sum_{j=1}^{n+1-i}1 = n-i+1 \geq |n-i|+1,$ as required.\\

\item  $d(\vC_n,\vC_i)$ for $i = n+(n-1)x+y.$ 
        
As is required, we have that \begin{align*}
d(\vC_n,\vC_i) &= \sum_{j=1}^{n}|(\vC_i)_j-1| = \sum_{j=1}^{y+1}(x+1) + |x-1| + \sum_{j=1}^{n-y-2}x\\
&= (x+1)(y+1) +|x-1|+x(n-y-2)  \\
&= (n-1)x+y+1+|x-1| \\
&\geq |n+(n-1)x+y-n|+1.
\end{align*}
        
\item $d(\vC_i,\vC_\ell)$ for $1 \leq \ell < i \leq n-1.$
        
We have $d(\vC_i,\vC_\ell) = \sum_{j=1}^{n}|(\vC_i)_j-(\vC_\ell)_j| \geq |2-0|+\sum_{j=1}^{i-\ell-1}|1-0| = |i-\ell|+1,$ as required.\\

\item  $d(\vC_i,\vC_\ell)$ for $1 \leq i \leq n-1$ and $\ell = n+(n-1)x+y$.
        
Let us assume that $ 1<i < n-y$. Here, we have that 
\[
d(\vC_i,\vC_\ell) = \sum_{j=1}^n|(\vC_i)_j -(\vC_\ell)_j| = \sum_{j=1}^{y+1}(x+2)+x+\sum_{j=1}^{n-1-i-y}(x+1)+|x-1|+\sum_{j=1}^{i-2}x .
\]
If $0<n-y-i$, then
\begin{align*}
d(\vC_i,\vC_\ell)&= (x+2)(y+1)+x+(x+1)(n-1-i-y)+|x-1|+(i-2)x\\
&=(n-1)x+y+1+|x-1|+n-i\\
&\geq |n+(n-1)x+y-i|+1.
\end{align*}
If, instead, $0=n-y-i$, then 
\begin{align*}
d(\vC_i,\vC_\ell)&= (x+2)(y+1)+x+|x-1|+(i-2)x\\
&=(y+i)x+2y+2x+|x-1|\\
&=nx+y+n-i+2x+|x-1|\\
&\geq |n+(n-1)x+y-i|+1,
\end{align*}
as required. Similar calculations show that $d(\vC_i,\vC_\ell)\geq |n+(n-1)x+y-i|+1$ in the case that $i \geq n-y$ or $i=1$.\\
        
\item  $d(\vC_i,\vC_\ell)$ for $i = n+(n-1)x+y$ and $\ell = n+(n-1)x'+y'.$
        
We have that $(\vC_i)_j = x+2-(\vC_{n-y})_j$ and $(\vC_\ell)_j = x'+2-(\vC_{n-y'})_j,$ for all $1\leq j\leq n$. We then have that
\begin{align*}
d(\vC_i,\vC_\ell) &= \sum_{j=1}^{n}\left|\left(x+2-(\vC_{n-y})_j\right)-\left(x'+2-(\vC_{n-y'})_j\right)\right| \\
&= |x-x'|n+d(\vC_{n-y},\vC_{n-y'}) \\
&\geq |x-x'|n+|y-y'|+1\\
&\geq |n+(n-1)x+y - (n+(n-1)x'+y')|+1,
\end{align*}
where the third line is from Case 3.
\end{enumerate}
    
We have therefore shown that the sequence $(\vC_1,\vC_2,\ldots,\vC_{(k-1)(n-1)+1})$ is a valid cooling sequence, and so $\CL(\mBox_i^nP_k) \geq (k-1)(n-1)+1$. 
\end{proof}

The use of Theorem~\ref{thm:CART:improved_cartesian_path_product} is broader than it initially seems. Given a collection of graphs $G_1,G_2\ldots,G_n$ with minimum diameter $k$, Theorem~\ref{thm:CART:improved_cartesian_path_product} yields the bound $\CL(\mBox_{i=1}^nG_i) \geq (n-1)(k-1)+1.$ While this bound is general, it is often not tight. The graphs whose cooling numbers fall within this gap are reminders that isometry is not the only effective tool in our toolbox, and suggest that graph cooling is primed for deeper, more structural study. However, if we treat $k$ as a constant with respect to $n$, then the smaller $k$ is, the tighter the bound. Indeed, for $k=3$, Theorem~\ref{thm:CART:improved_cartesian_path_product} is tight.

\begin{theorem}\label{thm:CART:p3_cartesian_product_exact}
If $n \geq 3$, then $\CL(\mBox_{i=1}^nP_3) = 2n-1.$
\end{theorem}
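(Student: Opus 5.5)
The lower bound $\CL(\mBox_{i=1}^nP_3)\geq 2n-1$ is the case $k=3$ of Theorem~\ref{thm:CART:improved_cartesian_path_product}, since $(k-1)(n-1)+1=2n-1$. It therefore remains to prove the upper bound $\CL(\mBox_{i=1}^nP_3)\leq 2n-1$. We label vertices by words in $\{0,1,2\}^n$ with $\ell_1$ distance, so $\diam=2n$. Theorem~\ref{thm:GENE:diam_bounds} gives only $\CL\le 2n+1$, so two rounds must be saved.

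Suppose, toward a contradiction, that some cooling process lasts at least $2n$ rounds. Let $(\vC_1,\dots,\vC_k)$ be its sequence of sources, and let $w$ be a vertex still uncooled after the propagation step of round $2n$ (or the source of round $2n$). By Lemma~\ref{lem:GENE:cooling_sequence_equation}, $d(\vC_i,w)\ge 2n-i+1$ for all $i$. In particular $d(\vC_1,w)\ge 2n$, so $\vC_1$ and $w$ are antipodal corners, i.e.\ $\{0,2\}$-words that differ in every coordinate. By symmetry we take $\vC_1=(\overline{0})$ and $w=(\overline{2})$. The source $\vC_2$ then satisfies $d(\vC_1,\vC_2)\ge 2$ and $d(\vC_2,w)\ge 2n-1$, so it has coordinate sum exactly $2$. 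Hence $\vC_2$ either has a single coordinate equal to $2$ or has two coordinates equal to $1$. More generally, each $\vC_i$ has coordinate sum $\sigma_i$ with $i\le \sigma_i\le 2n-(2n-i+1)+\ldots$; more precisely, $\sigma_i\ge i$ from the constraint with $\vC_1$ and $2n-\sigma_i\ge 2n-i+1$ from the constraint with $w$. These two are incompatible: the second gives $\sigma_i\le i-1$.

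This looks too strong, so I have to be careful here. The constraint with $w$ applies only if $w$ is uncooled at the end of round $2n-1$. If the process lasts exactly $2n$ rounds because a source is chosen in round $2n$, then $w=\vC_{2n}$, and the requirement is $d(\vC_i,w)\ge 2n-i+1$ for $i<2n$. This gives $\sigma_i\ge i$ and $\sigma_i\le i-1$ for $2\le i\le 2n-1$, a contradiction as soon as $2n-1\ge 2$. If the process lasts beyond round $2n-1$ because some vertex $w$ is uncooled after the propagation step of round $2n$, then $d(\vC_i,w)\ge 2n-i+1$ again forces the same contradiction for $i=2$, provided the sequence has a second source. Since $n\ge 3$, the graph is not cooled after round $1$, so $\vC_2$ exists.

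So the main step is bookkeeping. I will carefully state that lasting $2n$ rounds forces a vertex $w$ uncooled after the propagation of round $2n$, hence $d(\vC_i,w)\ge 2n-i+1$ for every earlier source. With the triangle inequality through coordinate sums, $d(\vC_1,\vC_i)+d(\vC_i,w)=2n$ whenever $\vC_1$ and $w$ are antipodal corners. In fact, $d(x,\overline 0)+d(x,\overline 2)=2n$ for every $x$. Then $i+(2n-i+1)\le 2n$ fails. The only potential obstacle is confirming that $\vC_1$ and $w$ are forced to be antipodal corners, which follows from $d(\vC_1,w)\ge 2n=\diam$. I also need the existence of a second source, which holds because $N_1[\vC_1]\neq V$.
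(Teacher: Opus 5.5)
Your lower bound is the paper's. Your upper-bound argument is correct whenever a source is chosen in round $2n$ (which also covers processes lasting longer): then $d(s_1,s_{2n})\ge 2n$ forces antipodal corners, and $d(x,\overline{0})+d(x,\overline{2})=2n$ contradicts the constraints on $s_2$. This is the paper's Case 1.

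\textbf{The gap.} You never treat the other way a process can last exactly $2n$ rounds: round $2n$ is a pure propagation round with no source. Your claim that lasting $2n$ rounds forces some vertex to survive the propagation step of round $2n$ is false. The inequality $d(s_i,w)\ge 2n-i+1$ holds only for a vertex that survives that propagation step. A vertex merely uncooled at the end of round $2n-1$, which is all you get here, satisfies only $d(s_i,w)\ge 2n-i$ for $1\le i\le 2n-1$. So $d(s_1,w)\ge 2n-1$, and antipodality is no longer forced. Even when $s_1=\overline{0}$ and $w=\overline{2}$, you only get $i\le \sigma_i\le i$, which is consistent, so the coordinate-sum identity yields no contradiction.

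\textbf{Why this case matters.} This case carries all the difficulty and all of the hypothesis $n\ge 3$. For $n=2$, the sources $(0,0),(2,0),(1,2)$ are valid in $P_3\square P_3$, and $(2,2)$ cools by propagation in round $4$. Hence $\CL(P_3\square P_3)=4=2n$, as in Table~\ref{tab:CART:cartesian_product_table}. Your argument uses $n\ge 3$ only to guarantee that $s_2$ exists, which is also true for $n=2$, so as written it would prove the false bound $\CL(P_3\square P_3)\le 3$.

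\textbf{How the paper closes it.} The paper's Case 2 supplies the missing idea. Take $s_1=\overline{0}$ and $w=\overline{2}$. The constraints $d(s_1,s_{2n-1})\ge 2n-1$ and $d(s_{2n-1},w)\ge 1$ force $s_{2n-1}$ to be $\overline{2}$ with a single coordinate $j$ lowered to $1$. For $2\le i\le 2n-2$ you have $\sigma_i=i$, and combining this with $d(s_i,s_{2n-1})\ge 2n-i$ forces $(s_i)_j=2$. Because $n\ge 3$, both $s_2$ and $s_3$ lie in this range. They have coordinate sums $2$ and $3$ and both have a $2$ in coordinate $j$, so they are adjacent, a contradiction.

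\textbf{Remaining work.} In this propagation-only case you must also rule out the non-antipodal configurations with $d(s_1,w)=2n-1$. Examples are $s_1$ having one coordinate equal to $1$, or $w$ being $\overline{2}$ with one coordinate lowered while $\overline{2}$ itself was cooled earlier. Similar coordinate bookkeeping disposes of these, but nothing in your proposal covers them.
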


\begin{proof}
Let $G = \mBox_{i=1}^nP_3$. Theorem~\ref{thm:CART:improved_cartesian_path_product} yields that $\CL(G) \geq 2n-1$, so we need only show that $\CL(G) \leq 2n-1$. Any diameter length path in $G $ begins at a word $w$ composed of only $0$ and $2$ and ends at $(\overline{2})-w,$ where subtraction is done component-wise. Thus, $|N_{2n}[v]\setminus N_{2n-1}[v]|\leq 1$ for every $v\in V(G)$ and $\CL(G) \leq \diam(G) = 2n$ by Theorem~\ref{thm:GENE:eccentricity}.

Toward a contradiction, suppose that $\CL(G) = 2n$ and let $(\vC_i)$ be an optimal cooling sequence for $G$. 
The first source, $\vC_1$, must be the endpoint of a diameter length path in $G$. Without loss of generality, assume that $\vC_1=(\overline{0}).$  The last vertex to cool must be $(\overline{2})$. We analyze both cases for the last round of cooling.

\begin{enumerate}
\item Suppose that $(\overline{2})$ is cooled manually in round $2n$. In particular, $\vC_{2n}=(\overline{2})$. For $2\leq i\leq 2n$, we have that $d(\vC_1,\vC_i) \geq |i-1|+1 = i$, and for $1\leq i \leq 2n-1$, we have $\vC_{2n} = (\overline{2})$. However, $$d(\vC_{2n},\vC_i) = \sum_{j=1}^{n} |(\vC_{2n})_j-(\vC_i)_j| = \sum_{j=1}^{n} | 2-((\vC_i)_j-(\vC_1)_j)|=  2n-d(\vC_1,\vC_i) \leq 2n-i,$$ for $2 \leq i \leq 2n-1,$ a contradiction. Therefore, $\vC_{2n} \neq (\overline{2})$.\\

\item Suppose that $(\overline{2})$ is cooled by propagation in round $2n.$ We have that $$d(\vC_1,\vC_{2n-1}) \geq 2n-1+1 = 2n-1,$$ so $\vC_{2n-1} = (\overline{2})-(1)_i$, where $(1)_i$ has $i^{th}$ digit 1 and 0 elsewhere. Therefore, $(\vC_i)_j = 2$ for all $2 \leq i \leq 2n-2$, because if $(\vC_i)_j = 0$, then $$d(\vC_i,\vC_{2n-1}) = 1+(2n-2)-i = 2n-1-i < |2n-1-i|+1,$$ failing the cooling condition. Likewise, if $(\vC_i)_j = 1$, then $$d(\vC_i,\vC_{2n-1}) = 2n-2-(i-1) = 2n-1-i < |2n-1-i|+1.$$ However, this implies that $(\vC_2)_j=(\vC_3)_j=2,$ and as $d(\vC_2,\vC_1) = 2$ and $d(\vC_3,\vC_1) =3$, this forces $d(\vC_2,\vC_3) = 1$, a contradiction. Hence, $(\overline{2})$ was not cooled by propagation in round $2n$.
\end{enumerate}
    
As both possibilities lead to a contradiction, $\CL(G) < 2n,$ as desired. 
\end{proof}

However, Theorem~\ref{thm:CART:improved_cartesian_path_product} is not tight for all $k\geq 4$. Note that $\CL(\mBox_{i=1}^{3} P_4) = 8,$ while Theorem~\ref{thm:CART:improved_cartesian_path_product} gives $\CL(\mBox_{i=1}^{3} P_4) \geq 7.$

Finally, we handle the combination of these two cases: the Cartesian product of a complete graph and a path. By Theorem~\ref{thm:CART:primary_cartesian_bounds}, we have that $\CL(P_m \square K_n)\geq \lceil \frac{2}{3}m\rceil -1=\diam(P_m \square K_n)-1-\lfloor\frac{m}{3}\rfloor$. However, if $n > m \geq 2$, then we can improve this and show that $P_m \square K_n$ is maximally cool.

\begin{theorem}
For $m>1$ and any $n$, we have that
$$\CL(P_m \square K_n) = \diam(P_m \square K_n) +1-\left\lfloor \frac{m+1}{n+1}\right\rfloor.$$

\end{theorem}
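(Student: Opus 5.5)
We work with coordinates $(i,a)$, where $i\in[m]$ indexes $P_m$ and $a$ ranges over the $n$ vertices of $K_n$. Then
\[
d((i,a),(j,b))=|i-j|+[a\neq b],
\]
and $\diam(P_m\square K_n)=m$ when $n\geq 2$. The claimed value is therefore $m+1-\lfloor\frac{m+1}{n+1}\rfloor$. The plan is to restate the cooling condition of Lemma~\ref{lem:GENE:cooling_sequence_equation} in these coordinates. For sources $s_t=(p_t,a_t)$ it reads
\[
|p_t-p_s|+[a_t\neq a_s]\ \geq\ |t-s|+1 \quad\text{for all } s\neq t .
\]
Hence $|p_t-p_s|\ge |t-s|$. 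Moreover, equality $|p_t-p_s|=|t-s|$ is only allowed when $a_t\ne a_s$, and two sources with $a_t=a_s$ need $|p_t-p_s|\ge|t-s|+1$.

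\textbf{Lower bound.} We sweep along the path in blocks. Within a block we place sources at consecutive path positions $p,p+1,\dots,p+n-1$ and give them $n$ distinct labels of $K_n$. Then we skip one path position and start the next block, reusing the same cyclic order of labels. Two sources with the same label then lie in different blocks, so their path distance exceeds their round difference by at least one. Two sources with different labels have path distance at least their round difference. So the condition above holds.

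Each block of $n$ sources consumes $n+1$ path positions. We then count how many sources fit into $m$ positions. At the end we exhibit one vertex that is still uncooled after the last manual step, for instance the far endpoint column with an unused label, or a neighbouring label in the last column. The end is arranged so that the round count comes out to exactly $m+1-\lfloor\frac{m+1}{n+1}\rfloor$. We will handle the offset at the two ends explicitly, so that the first block may start at position $1$ without a preceding skip. We also check small cases such as $m<n$, where no skip is needed at all and the graph is maximally cool.

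\textbf{Upper bound.} Take an optimal sequence $s_1,\dots,s_k$. First we show that $p_t$ is strictly monotone, say increasing, with $p_{t+1}-p_t\ge 1$. We argue that a reversal $p_{t-1},p_{t+1}$ on the same side of $p_t$ gives $|p_{t+1}-p_{t-1}|\le\max(|p_{t+1}-p_t|,|p_{t-1}-p_t|)-1$ or similar. We combine this with the pairwise inequality $|p_t-p_s|\ge|t-s|$ applied to nearby pairs; this needs some care, and a cleaner route may be to use the first source and induct.

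Then we consider any $n+1$ consecutive sources $s_t,\dots,s_{t+n}$. By pigeonhole two of them share a label, so their path distance strictly exceeds their round difference. By monotonicity this forces $p_{t+n}-p_t\ge n+1$. Summing over disjoint windows gives $p_k-p_1\ge (k-1)+\lfloor\frac{k-1}{n}\rfloor$ roughly, while $p_k-p_1\le m-1$. Finally we account for the last round, as in the proof of Theorem~\ref{thm:GENE:eccentricity}. Any vertex still uncooled after round $k$ must be at distance at least $k-t+1$ from every $s_t$. Applying the same pigeonhole argument to this vertex (treated as a virtual source $s_{k+1}$) bounds the total number of rounds.

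The main obstacle is this bookkeeping at the ends. We must match the floor $\lfloor\frac{m+1}{n+1}\rfloor$ exactly, which means deciding how the virtual final vertex and the first source interact with the window count. We would first try to treat the final uncooled vertex as a $(k+1)$-st source satisfying the same inequalities. We would then partition the indices $1,\dots,k+1$ into windows of length $n+1$, which should yield $m-1\ge k+\lfloor\frac{k}{n}\rfloor$. This rearranges to the stated bound.
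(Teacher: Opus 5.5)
Your lower bound is correct and is essentially the paper's construction. The paper's greedy rule (cool a valid vertex in the nearest copy of $K_n$ that is not yet fully cooled) produces your block pattern: $n$ consecutive copies with distinct labels, then one skipped copy. Your distance check goes through. Write $m+1=q(n+1)+r$ with $0\le r\le n$; the end cases then work as you suggest.
For $r=0$ you place $qn$ sources.
For $r\ge 2$ you place $qn+r-1$ sources ending in copy $m$, and $(m,a)$ stays uncooled for any label $a$ unused in the last (partial) block.
For $r=1$ the last full block ends in copy $m-1$, and all of copy $m$ stays uncooled.

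The upper bound has two genuine gaps.

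First, the monotonicity you intend to prove is false. For $m\ge 6$, take the sources $(5,a),(6,b),(3,b)$ with $a\ne b$. For the pairs $(s_1,s_2),(s_1,s_3),(s_2,s_3)$ the left-hand sides of your restated condition are $2,3,3$, against required values $2,3,2$. So the sequence is valid, yet the positions go $5,6,3$. In general, sources may lie on both sides of $s_1$ and jump back and forth. Your window bound $p_{t+n}-p_t\ge n+1$ and its summation are only justified for monotone sequences. For a non-monotone sequence sorted by position, the extra unit from a repeated label no longer simply adds to the surplus created by backtracking, so a separate argument would be needed.

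Second, the final uncooled vertex $v$ is not a $(k+1)$-st source. Being uncooled at the end of round $k$ only gives $d(v,s_t)\ge k-t+1$. These are the constraints of a \emph{second} source in round $k$, each one less than what a source in round $k+1$ would need. The inequality you aim for is in fact false. In $P_4\square K_2$ the sources $(1,0),(2,1),(4,0)$ leave $(4,1)$ uncooled after round $k=3$. So cooling lasts $4$ rounds, which is the true value, yet $m-1=3<k+\lfloor k/n\rfloor=4$. In general your bookkeeping would give $\CL(P_m\square K_n)\le m-\lfloor\frac{m+1}{n+1}\rfloor$ whenever $(n+1)\nmid(m+1)$, contradicting your own lower bound.

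The paper avoids both problems by counting vertices instead of distances.
After round $2$, the copy of $K_n$ containing $s_1$ is fully cooled, so each of the $n$ copies $\{(i,a): i\in[m]\}$ of $P_m$ contains a cooled vertex.
If one of these copies were entirely cooled, the next round would be the last, since every vertex is adjacent to a vertex of that copy.
Hence every round other than the first and last cools at least $n$ vertices by propagation, plus one source.
A process lasting $R$ rounds therefore satisfies $mn\ge 2+(R-2)(n+1)$, that is, $R\le 1+\lceil\frac{mn-1}{n+1}\rceil=m+1-\lfloor\frac{m+1}{n+1}\rfloor$.
This handles non-monotone sequences and the last round at once.

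Both arguments implicitly need $n\ge 2$. For $n=1$ the diameter is $m-1$, and the displayed formula is one less than $\CL(P_m)$.
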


\begin{proof}
For the lower bound, we start by cooling any vertex $\vC_1$ corresponding to a leaf in $P_m$. We then cool subsequent sources by cooling any valid vertex in the copy of $K_n$ that is not completely cooled and is closest to $\vC_1$. In this way, $m+1$ vertices are cooled each round except the first and last, for a total of $\left\lceil \frac{mn-1}{n+1}\right\rceil$ additional rounds. 

For the upper bound, note that in round 1, exactly one vertex $(\vA_1,\vB_1)$ is cooled. 
During the propagation step in round 2, the $n-1$ vertices of the form $(\vA_1,v)$ for $v\in V(K_n)\setminus\{\vB_1\}$ are cooled, along with at least one vertex of the form $(u,\vB_1)$ with $u \neq \vA_1$. 
Since, for all $v\in V(K_n)$, there is a cooled vertex of the form $(u,v)$, in each propagation step (except for the last) there will be an uncooled vertex $(u',v)$ that is cooled by propagation in the following round.

That is, during each propagation step, except in the last round, at least $n$ vertices are cooled, and during the first and last rounds, at least $n+1$ uncooled vertices are cooled. 
There are $mn-1$ uncooled vertices at the start of round $2$, and so after an additional $\left\lceil \frac{mn-1}{n+1}\right\rceil$ rounds, all of these vertices must be cooled. This completes the proof once we note that $\left\lceil \frac{mn-1}{n+1}\right\rceil+1=(m+1)+\left\lceil \frac{-m-1}{n+1}\right\rceil=\diam(P_m\square K_n)+1-\left\lfloor\frac{m+1}{n+1}\right\rfloor$. 
\end{proof}

\section{Cooling strong products}

We begin this section by comparing the cooling number of the strong product of two graphs with the Cartesian product of the same pair of graphs. 

\begin{theorem} \label{thm:STRONG:strong_lowerbounds_cartesian}
For any graphs $G$ and $H$, we have $\CL(G \boxtimes H) \leq \CL(G \square H)$. 
\end{theorem}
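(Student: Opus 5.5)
The plan is to compare the two products on their common vertex set $V(G)\times V(H)$ and use the fact that strong-product distances never exceed Cartesian-product distances. Since $E(G\square H)\subseteq E(G\boxtimes H)$, every path in $G\square H$ is a path in $G\boxtimes H$. Hence
$$d_{G\boxtimes H}(x,y)\leq d_{G\square H}(x,y)\quad\text{for all } x,y.$$
Explicitly, the left side is $\max\{d_G,d_H\}$ and the right side is $d_G+d_H$. Because the cooling condition of Lemma~\ref{lem:GENE:cooling_sequence_equation} is a lower bound on distances, it transfers from the product with smaller distances to the one with larger distances. The argument then mirrors the proof of Lemma~\ref{lem:GENE:isometric_subgraph}, with the isometry replaced by this inequality.

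\emph{Step 1: the sequence transfers.} Take an optimal cooling sequence $(\vA_1,\ldots,\vA_\mA)$ for $G\boxtimes H$. By Lemma~\ref{lem:GENE:cooling_sequence_equation},
$$d_{G\square H}(\vA_i,\vA_j)\geq d_{G\boxtimes H}(\vA_i,\vA_j)\geq |j-i|+1.$$
So the same sequence is a valid sequence of sources in $G\square H$, and each $\vA_j$ is still uncooled when it is chosen. This sequence need not be maximal in $G\square H$. However, any valid sequence of sources can be extended greedily, by continuing the process, to a full cooling process. That process lasts at least as many rounds as the prefix, so cooling $G\square H$ lasts at least $\mA$ rounds.

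\emph{Step 2: the extra round.} The cooling number is either $\mA$ or $\mA+1$, since a source is chosen in every round except possibly the last.
\begin{itemize}
\item If $\CL(G\boxtimes H)=\mA$, Step 1 already gives $\CL(G\square H)\geq \mA$.
\item If $\CL(G\boxtimes H)=\mA+1$, some vertex $w$ is still uncooled at the end of round $\mA$ in $G\boxtimes H$. This means $d_{G\boxtimes H}(w,\vA_i)\geq \mA-i+1$ for $1\leq i\leq \mA$, and $w\neq \vA_i$. The distance inequality gives the same bounds in $G\square H$, so $w$ is uncooled at the end of round $\mA$ there too. Therefore cooling $G\square H$ lasts at least $\mA+1$ rounds.
\end{itemize}

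In both cases $\CL(G\square H)\geq\CL(G\boxtimes H)$. The only delicate point is the one just handled: a maximum-length source sequence alone does not determine the number of rounds. That is why the uncooled witness $w$ has to be carried over explicitly, exactly as in Lemma~\ref{lem:GENE:isometric_subgraph}.
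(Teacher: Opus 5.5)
Your proposal is correct and follows essentially the same route as the paper. The paper likewise uses $d_{G\boxtimes H}\le d_{G\square H}$ to transfer an optimal cooling sequence of $G\boxtimes H$ to $G\square H$ via Lemma~\ref{lem:GENE:cooling_sequence_equation}, and then carries over a vertex still uncooled at the end of the last source round to account for the possible extra round.
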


\begin{proof}
For $(\vA_1,\vB_1),(\vA_2,\vB_2) \in V(G) \times V(H)$, we have that $$d_{G\boxtimes H}((\vA_1,\vB_1),(\vA_2,\vB_2)) \leq d_{G\square H}((\vA_1,\vB_1),(\vA_2,\vB_2)).$$
Therefore, if $(\vC_1,\vC_2,\dots,\vC_\mC)$ is a cooling sequence for $G\boxtimes H$, by Lemma~\ref{lem:GENE:cooling_sequence_equation}, we have that $|j-i|+1\leq d_{G\boxtimes H}(\vC_i,\vC_j) \leq d_{G\square H}(\vC_i,\vC_j)$, and so $(\vC_1,\vC_2,\dots,\vC_\mC)$ is also a valid cooling sequence for $G \square H$. Finally, if there is some vertex $s_\infty$ of $G\boxtimes H$ that is uncooled at the end of round $e$ using the cooling sequence above, then $$ e-i+1 \leq d_{G\boxtimes H}(\vC_\infty,\vC_i)\leq d_{G\square H}(\vC_\infty,\vC_i).$$ So, $\vC_\infty$ is also uncooled at the end of round $e$ in $G\square H$, and therefore $\CL(G\square H) \geq \CL(G\boxtimes H)$. 
\end{proof}
      
This bound is rarely tight, because the diameter of $G\boxtimes H$ and $G \square H$ are typically very different. 
Although we could use the results from the previous section along with Theorem~\ref{thm:STRONG:strong_lowerbounds_cartesian}, we can achieve much better results by studying the strong product directly. 
Define $D_\boxtimes = \diam(G\boxtimes H) = \max(\diam(G),\diam(H)).$

\begin{theorem}\label{thm:STRONG:strong_general_bounds}
Let $G$ and $H$ be graphs with $\diam(G) \leq \diam(H)$. If $\diam(H) \le 2\diam(G)-1,$ then $G\boxtimes H$ is maximally cool.  If instead $\diam(H) \geq 2\diam(G)$, then
\[
 D_\boxtimes+1 - \left\lceil \frac{D_\boxtimes - 2\diam(G)+3}{\overline{\CL}(G)+1}\right\rceil \leq \CL(G\boxtimes H)\leq D_\boxtimes+1,
\]
where $\overline{\CL}(G)$ is the length of a longest optimal cooling sequence for $G$. 
\end{theorem}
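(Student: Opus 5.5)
Throughout, write $d=\diam(G)$ and $D=\diam(H)=D_\boxtimes$. In both regimes the upper bound $\CL(G\boxtimes H)\le D_\boxtimes+1$ follows directly from Theorem~\ref{thm:GENE:diam_bounds}, so only the lower bounds need work. By Lemma~\ref{lem:GENE:cooling_sequence_equation}, a lower bound of $N$ amounts to two things: a sequence $s_1,\dots,s_{N-1}$ satisfying the cooling condition, and a vertex $s_\infty$ with $d(s_i,s_\infty)\ge N-i$ for every $i$. The basic tool is the distance formula
\[
d_{G\boxtimes H}((a,x),(b,y))=\max\bigl(d_G(a,b),d_H(x,y)\bigr).
\]
Fix diametral paths $u_0,\dots,u_d$ in $G$ and $v_0,\dots,v_D$ in $H$. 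The product $P_{d+1}\boxtimes P_{D+1}$ on these paths is isometric in $G\boxtimes H$, by the same max-formula argument used for Lemma~\ref{lem:CART:isometric_subgraph_cartesian}. So for the first regime, Lemma~\ref{lem:GENE:isometric_subgraph} reduces the problem to rectangular strong grids $P_{d+1}\boxtimes P_{D+1}$ with $d\le D\le 2d-1$.

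For the grid with $d\le D\le 2d-1$, I target the corner $s_\infty=(u_d,v_D)$ and build sources in a zig-zag. The $H$-coordinate advances along $v_0,v_1,\dots$, which guarantees $d(s_i,s_\infty)\ge D+1-i$. The $G$-coordinate alternates between the two ends, near $u_0$ and near $u_d$ (shifting inward as needed), so that consecutive sources with $H$-distance only $|i-j|$ receive the missing $+1$ from $G$-distance about $d$. The requirement that $G$-distance $d$ dominate $|i-j|+1$ for all pairs of opposite parity is exactly where $D\le 2d-1$ should enter: the $H$-coordinate only needs to move about $D/2$ steps per side. Pairs of equal parity get their separation from the $H$-coordinate, which advances by $2$ between them, giving $H$-distance $2|i-j|/2\cdot$ roughly $|i-j|$ plus slack. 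I will check that $D$ sources fit and that $s_\infty$ still lies at distance $\ge D+1-i$ from each $s_i$, giving $D+1$ rounds. I expect this explicit construction, and in particular verifying the cooling condition for all pairs near the end, to be the main obstacle. If the two-ends zig-zag fails near the middle, my fallback is the pattern from the square-grid result of \cite{ambrose}: cool the grid in diagonal bands, with $s_i$ on the anti-diagonal at $\ell_\infty$-distance $D+1-i$ from $s_\infty$.

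For the regime $D\ge 2d$, I will not reduce to a path in $G$; instead I work in $G\boxtimes P_{D+1}$ using an optimal cooling sequence $a_1,\dots,a_k$ of $G$ with $k=\overline{\CL}(G)$. The sources run in phases. In a phase, the $H$-coordinate is fixed at some $v_t$ (or advances by one per round), and the $G$-coordinates run through $a_1,\dots,a_k$. Because $d_G(a_i,a_j)\ge|i-j|+1$, the cooling condition within a phase is inherited from $G$. Between phases the $H$-coordinate jumps ahead, so that $H$-distance separates sources from different phases. Each phase consumes $k+1$ levels of $H$ while losing one round against the ideal of $D+1$, which produces the denominator $\overline{\CL}(G)+1$. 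The first and last $\approx d$ levels are handled as in the first regime, which explains the $-2d+3$ in the numerator. A final counting step then gives exactly $D+1-\lceil (D-2d+3)/(\overline{\CL}(G)+1)\rceil$ rounds.
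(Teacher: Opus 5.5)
\textbf{The zig-zag construction fails, and your second regime depends on it.}

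Two parts of your plan agree with the paper. The first is the reduction to the isometric grid $P_{d+1}\boxtimes P_{D+1}$. The second is the mechanism for $D\ge 2d$: sub-phases run through an optimal sequence $a_1,\dots,a_k$ of $G$ while the $H$-level advances by one per round, so each sub-phase spends $k$ rounds on $k+1$ levels. This is the paper's Phase~2.

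The gap is the explicit construction for $D\le 2d-1$, which your second regime also uses for its first and last $\approx d$ levels. Suppose $s_i$ lies on level $v_{i-1}$, as your description suggests. Distances in $G\boxtimes H$ are maxima of coordinate distances, so every pair has $H$-distance exactly $|i-j|$, one short of the requirement. All separation must then come from the $G$-coordinates. These would have to form a valid sequence of sources on the diametral path $u_0\cdots u_d$, which has at most $\CL(P_{d+1})=\lceil (d+2)/2\rceil$ terms. That is fewer than $D$ once $d\ge 4$.

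Concretely, $s_i$ and $s_{i+2}$ lie near the same end at $H$-distance $2$, but the cooling condition demands $3$. The slack you claim for equal-parity pairs is not there.

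More generally, no scheme that protects $s_\infty=(u_d,v_D)$ through the $H$-coordinate alone (that is, with $s_i$ on a level at most $v_{i-1}$) yields $D$ sources when $D>d$. Indeed, $d(s_1,s_\infty)\ge D$ forces $s_1$ onto level $v_0$. Then $d(s_1,s_{d+1})\ge d+1$ forces $s_{d+1}$ onto a level at or beyond $v_{d+1}$.

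Your fallback has the same defect. Anti-diagonals are level sets of the $\ell_1$-distance, not the max-distance. Moreover, if $d(s_i,s_\infty)=D+1-i$ exactly, then each $s_i$ with $i\le D-d$ is forced onto level $v_{i-1}$. The path obstruction then returns once $D$ exceeds roughly $3d/2$.

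\textbf{What the paper does instead.} The missing idea has two parts. First, move two steps per round along one axis, so that within each straight run $d(s_i,s_j)=2|i-j|$. Second, after about $d$ rounds, switch from $H$-distance to $G$-distance as what keeps the target uncooled. The paper's construction runs as follows.
\begin{enumerate}
\item It cools $(u_0,v_0),(u_2,v_0),\dots$ along level $v_0$. Every such source is at distance $D$ from $(u_d,v_D)$, so there is slack.
\item It climbs the $u_d$ side in steps of two up to level about $d$. By the propagation step of round $d+1$, every vertex on levels $v_0,\dots,v_d$ is cooled, because $\diam(G)=d$.
\item It jumps to the $u_0$ side, at a level beyond $v_{i-1}$ for its round $i$, and climbs in steps of two.
\item It finishes along level $v_D$ in steps of two, stopping short of $(u_d,v_D)$.
\end{enumerate}
At each turn the step must be adjusted by one so that sources two rounds apart stay at distance at least $3$.

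The $u_0$-side sources are at $G$-distance $d$ from the earlier $u_d$-side sources and from the target. The condition $D\le 2d-1$ is what lets these last $D-d$ sources fit while each keeps distance at least $D+1-i$ from $(u_d,v_D)$. For $D\ge 2d$, the paper inserts your sub-phases between these two halves. With this opening and closing construction supplied, your second-regime outline would go through.
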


\begin{proof}
The upper bound follows from Theorem~\ref{thm:GENE:diam_bounds}.
To show the lower bound, we provide an example of a cooling sequence of the desired length. 
Let $m=\diam(G)+1$, $n=\diam(H)+1$, and suppose we label the vertices so that $(1,2,\ldots,m) \subseteq V(G)$ is 
an induced path of $G$ of length $\diam(G)$, and $(1,2,\ldots,n) \subseteq V(H)$ is an induced path of $H$ of length $\diam(H)$. Let $(g_1, g_2, \ldots, g_c)$ be an optimal cooling sequence in $G$. We cool $G\boxtimes H$ in three phases by providing a sequence of sources $(b_i).$ 

{\bf Phase 1: }
This phase lasts until the end of the propagation step of round $m$. 
In round $i$ with $1 \leq i \leq \left\lceil\frac{m}{2}\right\rceil$, we cool $b_i = (2i-1,1)$. 
For $1\le i < j \le \left\lceil\frac{m}{2}\right\rceil$, we note that $d_{G\boxtimes H}(b_i,b_j) = 2|j-i|>|j-i|+1$, and so these $(b_i)$ satisfy the cooling condition.
For the remainder of the first phase, we split into cases based on the parity of $m$. 

If $m = 2k$ is even, we cool $b_{i+k} = (m,2i+1)$ in round $i+k$, for $1 \leq i \leq k-1$. 
Note that $d_{G\boxtimes H}(b_{i+k},b_{j+k})=2|j-i|\geq |j-i|+1$ and $$d_{G\boxtimes H}(b_i,b_{j+k}) =\max\{m-2i+1,2j\}\geq |(j+k)-i|+1.$$ This last inequality holds because
$$
\max\{m-2i+1,2j\}\geq \frac{(m-2i+1)+(2j)}{2}= j+k-i+\frac{1}{2},
$$
and the distance must be an integer, so we may take the ceiling. Thus, these $(b_i)$ satisfy the cooling condition.

If $m = 2k+1$ is odd, then we cool $b_{i+k+1} = (m,2i+2)$ in round $i+k+1$ for the values $1 \leq i \leq k-1$. 
Note that $d_{G\boxtimes H}(b_{i+k+1},b_{j+k+1})=2(j-i)\geq |j-i|+1$ and $d_{G\boxtimes H}(b_i,b_{j+k+1})=\max\{m-2i+1,2j+1\}\geq |(j+k+1)-i|+1$. This last inequality holds because 
$$
\max\{m-2i+1,2j+1\}\geq \frac{(m-2i+1)+(2j+1)}{2} \geq (j+k+1)-i+\frac{1}{2},
$$
and the distance must be an integer, so we may take the ceiling. Thus, these $(b_i)$ satisfy the cooling condition.

We have played $m-1$ rounds in both the odd and even cases. 
During the propagation step of round $m$, all vertices of distance $m-1 = \diam(G)$ from $b_1=(1,1)$ will be cooled, and no vertex $(i,j)$ with $j>m$ has been cooled. 
That is, the set of cooled vertices is exactly those vertices $(u,v)$ with $d_H(1,v)\le m-1$.

{\bf Phase 2:} 
If $\diam(H) \le 2\diam(G)-1,$ we skip phase 2, noting that in this case, vertices of distance $N'=m-1$ from $b_1$ were cooled by the propagation step of round $I'=m$ during the first phase.  
Otherwise, we play phase 2 as a sequence of subphases, with subphase $s$ starting after the propagation step of round $r_s=m+(s-1)c$. We take as our inductive hypothesis that at the start of subphase $s$, the vertices of distance $D_s=m-1+(s-1)(c+1)$ from $b_1$ are cooled, 
and that at the end of subphase $s$, the vertices of distance $D_{s+1}=m-1+s(c+1)$ from $b_1$ are cooled 
for the values $1 \leq s \leq \left\lceil (D_\boxtimes-2m+3)/(c+1)\right\rceil$. 

In round $i$ of subphase $s$, we cool $b_{r_s-1+i} = (g_i, D_s +1 + i)$. 
Once the propagation step of round $i+1$ is performed, 
all vertices of distance up to $D_s +i$ from $b_1$ are cooled, 
the vertices of distance $D_s +i+2$ or more from $b_1$ are uncooled, and the vertices $(u,D_s+i+1)$ that are cooled are distance at most $i-j+1$ from $(g_j,D_s+i+1)$ for $1 \leq j \leq i$.

The set of $u\in V(G)$ such that $(u,D_s+i+1)$ are cooled are those where $u\in V(G)$ would be cooled when cooling the sequence $(g_i)$ on $G$, so $(g_{i+1},D_s+i+1)$ satisfies the cooling condition for $i < c$. 
As a result, during the propagation step of round $r_{s+1}$, all $(u,D_{s+1})$ are cooled for $u\in V(G)$. 
Though there may be uncooled vertices $(u,v)$ with $d_{G\boxtimes H}((u,v),b_1)=D_{s+1}$, we assume that these are cooled, as this only speeds up the cooling. Thus, our induction holds true.

At the end of phase $2$, the vertices of distance 
$$
N'=m-1+\left\lceil \frac{D_\boxtimes-2m+3}{c+1}\right\rceil (c+1)\geq D_{\boxtimes}-m+2
$$ 
from $b_1$ have been cooled by the end of the propagation step of round $I'$, where we have that
$
I' = m+\left\lceil \frac{D_\boxtimes-2m+3}{c+1}\right\rceil c
$.

{\bf Phase 3: }
Define $n'=n-N'-1$, which we note is the maximum value so that the vertices of distance at most $n'-1$ from $(m,n)$ remain uncooled. 
In round $I'+i-1$ for $1 \leq i \leq \left\lceil\frac{n'}{2}\right\rceil$, we 
manually cool the source  
$(1,N'+2i)$.  
If $n'$ is odd, we manually cool $(1+2i,n)$ in round $I'+\left\lceil \frac{n'}{2}\right\rceil+i-1$ for $1 \leq i \leq \left\lfloor\frac{n'}{2}\right\rfloor$, noting that $1+2\left\lfloor\frac{n'}{2}\right\rfloor=n'\leq m-1$ so each of these manually cooled vertices are vertices of the grid. 
If $n'$ is even, then we manually cool $(2i,n)$ 
in round $I'+\left\lceil \frac{n'}{2}\right\rceil+i$ for $1 \leq i \leq \left\lfloor\frac{n'}{2}\right\rfloor$, 
noting that $2\left\lfloor\frac{n'}{2}\right\rfloor=n'\leq m-1$ so each of these manually cooled vertices are vertices of the grid.
In either case, we cool $n'$ vertices in this phase, with the final vertex being $(y,n)$ with $y=n'\leq m-1$, meaning $(m,n)$ is uncooled.
As such, cooling lasts until round $I'+n'$, and the result follows in the case $\diam(G) \leq \diam(H) \le 2\diam(G)-1$ since $I'+n'=n = D_\boxtimes+1$, and it follows in the case $\diam(H) \ge 2\diam(G)$ since 
$$
I'+n' 
= \left(m+\left\lceil 
\frac{D_\boxtimes-2m+3}{c+1}
\right\rceil c\right)+ (n-N'-1) 
= D_\boxtimes +1 - \left\lceil 
\frac{D_\boxtimes-2m+3}{c+1}
\right\rceil.
\qedhere$$
\end{proof}

If both graphs are paths, the upper bound of Theorem~\ref{thm:STRONG:strong_general_bounds} can be strengthened using a result about the isoperimetry of $P_m \boxtimes P_n$. 

\begin{theorem}[\cite{strong_product_isoperimetry_paper}] \label{thm:STRONG:strong_grid_isoperimetry}
Let $m\geq n$ and $S\subseteq V(P_m \boxtimes P_n)$ such that $$\frac{m^2-2m+1}{4} \leq |S| \leq mn-\frac{m^2+2m+5}{4}.$$ 
If $N(S) = \{v\in N(u) : u \in S, v \notin S\}$, then 
\[
|N(S)|\geq 
\begin{cases}
m & \text{if } m \text{ divides } |S|,\\
m+1 & \text{otherwise.}
\end{cases}
\]
\end{theorem}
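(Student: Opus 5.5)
The plan is to use the standard compression method for vertex-isoperimetry on grids, and then to count the boundary of the resulting "staircase" sets directly. Write the vertices as $(i,j)$ with $i\in[m]$, $j\in[n]$. I expect the extremal sets for this size range to be unions of full lines of length $m$, whose boundary is exactly the next line, of size $m$. This fits the divisibility condition: $m \mid |S|$ is exactly when such a union is possible.

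\textbf{Step 1: compression.} For a set $S$, define the $j$-compression by replacing, in each line $\{(i,j) : j\in[n]\}$, the points of $S$ by the same number of points pushed to the smallest coordinates. Define the $i$-compression analogously. The key claim is that neither compression increases $|N(S)|$ in $P_m\boxtimes P_n$.

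To prove the claim, write $N[S]$ line by line. The trace of $N[S]$ on line $j$ is $N_{P}[S_{j-1}\cup S_j\cup S_{j+1}]$ along that line. Its size is at least $\max(|S_{j-1}|,|S_j|,|S_{j+1}|)+1$, unless the maximum fills the line. Compression attains this bound, because initial segments are nested, so the union of three initial segments is the longest one. Summing over lines gives the claim.

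Iterating both compressions changes $\sum_{(i,j)\in S}(i+j)$ strictly until it stabilizes. So we may assume $S$ is a down-set: if $(i,j)\in S$, then every $(i',j')$ with $i'\le i$ and $j'\le j$ also lies in $S$.

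\textbf{Step 2: boundary of a down-set.} Let $S$ be a down-set with line lengths $h_1\ge h_2\ge\cdots$. Then $N(S)$ is the set of cells of the staircase directly outside $S$, together with the diagonal corner cells. Counting gives
\[
|N(S)| = (\text{number of nonempty lines})+h_1+1 - (\text{corrections at the grid edges}).
\]
More precisely, $N(S)$ has one point in each line meeting $S$ and one in the first empty line, except where $S$ already reaches the far side of the grid. Hence $|N(S)|$ is at least the ``width plus height'' of the staircase, minus the parts of its outline lying on the boundary of the grid.

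\textbf{Step 3: case analysis on where the staircase touches the grid.} The main obstacle is using the size window $\frac{(m-1)^2}{4}\le|S|\le mn-\frac{m^2+2m+5}{4}$ to rule out every shape with boundary below $m$.

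First, suppose $S$ touches neither far side. Then $S$ sits inside an $a\times b$ box with $|S|\le ab$, and $|N(S)|\ge a+b+1$. The lower size bound gives $ab\ge (m-1)^2/4$, hence $a+b\ge m-1$ by AM--GM, and so $|N(S)|\ge m$. Equality analysis then gives $m+1$ unless the box is nearly square with $a+b=m-1$ and $S$ fills it; I would check that case separately against divisibility.

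Second, suppose $S$ touches the far end of the length-$m$ lines, or the far end of the other direction. Then the complement is handled symmetrically. The complement of a down-set is an up-set, and the upper size bound plays the same role for it; the extra $+2m+5$ accounts for the extra corner point appearing in the strong product.

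Finally, if $S$ spans a full length-$m$ line, then every line direction is blocked on one side. In that case $|N(S)|\ge m$, with equality exactly when $S$ is a union of full lines, which forces $m\mid|S|$. Otherwise a single step in the staircase adds an extra boundary vertex, which gives $m+1$.

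I expect the delicate part to be the bookkeeping in this last step, especially when the staircase touches two adjacent sides. If the casework becomes unwieldy, I would fall back on an induction on $n$ that removes the last line.
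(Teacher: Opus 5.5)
The paper gives no proof of this statement (it is quoted from the cited isoperimetry paper), so your outline has to stand on its own. Steps 1 and 2 are sound. In the strong product, the trace of $N[S]$ on a line is the path-neighborhood of the union of the three adjacent traces, so line compressions do not increase $|N(S)|$; the bound $\max+1$ needs the traces not all empty, but that case is trivial. A set stable under both compressions is a down-set. The genuine gaps are in Step 3, and both sit exactly where the statement as printed is false.

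\textbf{First gap: the relation between $m$ and $n$.} Let $h_1\ge\cdots\ge h_r\ge 1$ be the traces of the down-set $S$ on the length-$m$ lines $[m]\times\{j\}$, and let $t$ be the number of full ones. You never use any relation between $m$ and $n$. You also send the case where $S$ reaches the far end of the length-$n$ direction to ``complementation and the upper size bound.'' That cannot work when $r=n$ but $h_1<m$. There $|N(S)|=h_1-h_n+n$, which equals $n$ for every $S=[h]\times[n]$, whatever $|S|$ is. So under the printed hypothesis $m\ge n$ the statement fails when $n<m$. For instance, take $m=10$, $n=8$, $S=[3]\times[8]$: then $|S|=24$ lies in the window $[20.25,48.75]$ and $10\nmid 24$, yet $|N(S)|=8$. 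The result needs $m\le n$, i.e., $m$ the shorter side. This is how it is applied in the proof of Theorem~\ref{thm:STRONG:strong_grid_result}. With $m\le n$ this case closes because $h_1-h_n+n\ge n\ge m$, with equality only if $n=m$ and $S$ is a union of full lines. Complementation is needed only when $h_1=m$ and $r=n$. There $|N(S)|=(m-h_n)+(n-t)-1$, while $(m-h_n)(n-t)\ge mn-|S|\ge \frac{(m+1)^2}{4}+1$, which forces $|N(S)|\ge m+1$.

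\textbf{Second gap: the lower endpoint.} The configuration you postpone in your first case (``check that case separately against divisibility'') is a genuine counterexample, not a case to be checked. For odd $m$, the corner square $S=[\frac{m-1}{2}]\times[\frac{m-1}{2}]$ has $|S|=\frac{(m-1)^2}{4}$, exactly the lower threshold. It has $|N(S)|=m$, while $m\nmid |S|$ since $\gcd(m,m-1)=1$. Concretely, with $m=n=5$ the $2\times 2$ corner square has $5$ outer neighbors, but the statement demands $6$. So the lower bound must be strict, $|S|>\frac{(m-1)^2}{4}$. Then $h_1 r\ge |S|$ forces $h_1+r\ge m$, and $|N(S)|=h_1+r+1\ge m+1$ whenever $h_1<m$ and $r<n$.

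With $m\le n$ and this strict lower bound, your four-way split ($h_1<m$ or $h_1=m$, and $r<n$ or $r=n$) does prove the result. Neither correction affects the paper's use of the theorem, where $m\le n$ and $|S|\ge m^2$.
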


If $G$ and $H$ are paths, then the upper and lower bounds can be shown to differ only by an additive factor of $\left\lceil \frac{m}{2} \right\rceil$. 
This is an improvement over Theorem~\ref{thm:STRONG:strong_general_bounds} in the case that $n > \frac{m^2}{4}$.

To derive the next result, we introduce the concept of \emph{waves} of play during the cooling process. 
Suppose the first cooled vertex is $u$ and the last cooled vertex we aim to cool is $v$. 
If round $r$ occurs during wave $w$, then the only vertices that may be manually cooled during round $r$ are those of distance at least $d(u,v)+2-w-r$ from $v$. 
Note that this means that no vertex of distance at most $d(u,v)+1-w-r$ from $v$ will be cooled during this round.  
As a natural consequence, the cooling number will be at least $d(u,v)+2-w$, presuming there is at least one round in wave $w$. 

\begin{theorem}\label{thm:STRONG:strong_grid_result}
Let $4 \leq m \leq n=D_\boxtimes+1$. 
If $n \leq 2m-2$, then $P_m \boxtimes P_n$ is maximally cool. 
If instead $n \geq 2m-1$, then
\[
 D_\boxtimes+1 - \left\lceil \frac{D_\boxtimes - 2m+5}{\left\lceil\frac{m}{2}\right\rceil+1}\right\rceil \le \CL(P_m \boxtimes P_n) 
 \le 
 D_\boxtimes+1- \left\lceil\frac{D_\boxtimes - 2m+3}{\left\lceil\frac{m}{2}\right\rceil+1}\right\rceil+\left\lceil\frac{m}{2}\right\rceil .
\]
\end{theorem}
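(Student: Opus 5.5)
The plan has two parts. The lower bounds come from Theorem~\ref{thm:STRONG:strong_general_bounds} applied with $G=P_m$ and $H=P_n$, where we need $\overline{\CL}(P_m)$ precisely. The upper bound comes from an isoperimetric counting argument organized into waves.

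For the lower bound, set $\diam(G)=m-1$, so the threshold $\diam(H)\le 2\diam(G)-1$ in Theorem~\ref{thm:STRONG:strong_general_bounds} becomes $n\le 2m-2$; in that range the theorem already gives maximal coolness. When $n\ge 2m-1$, substituting $\diam(G)=m-1$ turns $D_\boxtimes-2\diam(G)+3$ into $D_\boxtimes-2m+5$. It remains to show that the longest optimal cooling sequence of $P_m$ has length $\overline{\CL}(P_m)=\lceil m/2\rceil$. I would prove this directly on the path. Cooling $1,3,5,\dots$ gives a valid sequence by Lemma~\ref{lem:GENE:cooling_sequence_equation}, of length $\lceil m/2\rceil$. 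Conversely, every source after the first must advance the cooled interval by at least two new vertices per round, which caps the length at $\lceil m/2\rceil$. I would also check that this matches the known value $\CL(P_m)=\lceil (m+1)/2\rceil$ up to the final-round subtlety. One should verify that Phase~2 of that proof only uses an optimal sequence of $G$ of length $c$, which is satisfied here.

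For the upper bound, fix an optimal cooling sequence with first source $u=b_1$ and last cooled vertex $v$. Assign rounds to waves as in the definition: round $r$ lies in wave $w$ when the current source is at distance $\ge d(u,v)+2-w-r$ from $v$ but not further constrained. The cooling number is then $d(u,v)+2-w_{\max}$, so it suffices to show that many waves are forced. The key tool is Theorem~\ref{thm:STRONG:strong_grid_isoperimetry}. While the cooled set $S$ lies in the middle size range, each propagation step adds at least $m$ vertices, since the $m\times n$ strong grid with $m\le n$ has the short side $m$ (with the theorem's roles of $m,n$ swapped to match). The manual step adds one more.

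Compare this with the ``slow'' profile, in which the cooled region is a set of full columns advancing one column per round. A wave that does not increment corresponds to exactly that profile. Each manual source placed ahead of the frontier creates, by the isoperimetric inequality, a boundary surplus. Within $\lceil m/2\rceil+1$ consecutive rounds, which is the length of the longest possible sub-sequence inside a column block, since it is bounded by $\overline{\CL}(P_m)$ as above, the frontier must jump an extra column. Equivalently, the wave index must increase. Counting across the middle range of the grid, whose length is about $D_\boxtimes-2m+3$ after discarding roughly $m/2$ columns at each end where the isoperimetric hypothesis $|S|\ge (m^2-2m+1)/4$ or its complement fails, yields at least $\bigl\lceil (D_\boxtimes-2m+3)/(\lceil m/2\rceil+1)\bigr\rceil$ wave increments. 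The additive slack $\lceil m/2\rceil$ absorbs the two end regions and the off-by-one in where $v$ sits.

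The main obstacle is making the wave/isoperimetry link rigorous. Specifically, I must show that once $S$ is in the admissible size range and the process stays within one wave for $\lceil m/2\rceil+1$ rounds, the accumulated excess $|N(S)|-m$ together with the manual sources forces the distance to $v$ to drop by one extra. I would first attempt a potential-function argument. Let $\Phi_r=|S_r|-m\cdot(\text{rounds})$. Show $\Phi$ grows by at least one per source not sitting on a full-column frontier, and that $m$ units of surplus force one extra column, and hence one extra wave, via the cooling condition.
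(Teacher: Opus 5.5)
Your handling of the case $n\le 2m-2$ and of the lower bound matches the paper: apply Theorem~\ref{thm:STRONG:strong_general_bounds} with $G=P_m$, $H=P_n$, $\diam(G)=m-1$ and $\overline{\CL}(P_m)=\lceil m/2\rceil$. The upper bound, however, has a genuine gap. Its quantitative engine is missing.

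\textbf{The rate is too slow.} Using only $|N(S)|\ge m$ plus one manual source, your potential $\Phi_r=|S_r|-mr$ grows by one per round. That buys one extra column only every $m$ rounds, so it yields roughly $\CL\le D_\boxtimes+1-(D_\boxtimes-2m+3)/(m+1)$ plus slack. For large $D_\boxtimes$ this is strictly weaker than the claimed bound with denominator $\lceil m/2\rceil+1$.

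The paper obtains the faster rate from the \emph{second} case of Theorem~\ref{thm:STRONG:strong_grid_isoperimetry}: $|N(S)|\ge m+1$ whenever $m\nmid |S|$. This is combined with divisibility bookkeeping. Start a block with $c_w\equiv 0 \pmod m$ cooled vertices. By induction, after the $i$th round of the block at least $c_w+(i-1)(m+2)$ vertices are cooled. The reason is that $c_w+(i-1)(m+2)+1\equiv 2i-1\not\equiv 0 \pmod m$ for $i<\lceil m/2\rceil$. So after the manual step, either the count is one larger than this (and the boundary is at least $m$), or $m\nmid |S|$ (and the boundary is at least $m+1$); in both cases the round gains $m+2$. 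One more propagation step closes the block at $c_{w+1}=c_w+m(\lceil m/2\rceil+1)$. Thus $\lceil m/2\rceil$ rounds yield $\lceil m/2\rceil+1$ columns' worth of vertices.

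Your justification of the block length does not work. Sources need not stay ``inside a column block,'' and $\overline{\CL}(P_m)$ plays no role here; the number $\lceil m/2\rceil$ comes purely from this modular count. Your own numbers are also inconsistent: one extra column per $\lceil m/2\rceil+1$ rounds would give denominator $\lceil m/2\rceil+2$, not $\lceil m/2\rceil+1$.

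\textbf{You never convert a vertex count into termination.} The wave bookkeeping as defined only produces lower bounds of the form $d(u,v)+2-w$; it is not an identity you can invert to get an upper bound. Turning surplus into progress ``via the cooling condition'' is also not needed.

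The paper's argument is pure counting, in three steps. First, after $m$ rounds the ball $N_{m-1}$ around the first source contains $m$ full columns, so at least $m^2$ vertices are cooled and the isoperimetric range is entered. Second, after $W$ blocks at least $m(D_\boxtimes-m+3)=m(n-m+2)$ vertices are cooled. Third, every uncooled vertex $x$ then has a cooled vertex within distance $m-2$. Otherwise the at least $(m-1)^2$ vertices of $N_{m-2}[x]$ would all be uncooled, contradicting $mn-(m-1)^2<m(n-m+2)$. Hence at most $m-2$ further rounds remain.

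Adding up gives at most $2m-2+\lceil m/2\rceil W$ rounds. Using $W(\lceil m/2\rceil+1)\le D_\boxtimes-2m+3+\lceil m/2\rceil$, this is at most $D_\boxtimes+1+\lceil m/2\rceil-W$. So the additive $\lceil m/2\rceil$ comes from rounding $W$ up, not from discarded end regions or the position of $v$.
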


\begin{proof}
The case for $n\leq 2m-2$ is done by Theorem~\ref{thm:STRONG:strong_general_bounds}, and so we assume for the remainder of the proof that $n \geq 2m-1$.
The lower bound is exactly Theorem~\ref{thm:STRONG:strong_general_bounds} when $G=P_m$ and $H=P_n$, so it remains to show the upper bound. Define $\numsteps =\left\lceil (D_\boxtimes - 2m+3)/(\left\lceil\frac{m}{2}\right\rceil+1)\right\rceil$. Suppose that $(x,y)$ was cooled in the first round and call $C_j=\{(i,j): 1 \leq i\leq m\}$ column $j$ for $j=1,2,\ldots,n$. 
During the propagation step of round $m$, all $C_{y+m'}$ are cooled for $-(m-1)\leq m' \leq m-1$, so at least $m$ columns are cooled, implying at least $m^2$ vertices are cooled. 
Note that this implies we must at least be in wave 2. 

We now describe the cooling process over waves $2$ to $\numsteps+1$. 
Wave $w\geq 2$ starts on or before round $m+(w-2) \left\lceil \frac{m}{2}\right\rceil$, and we assume that at least a total of $c_w = m\left(m+(w-2) (\left\lceil \frac{m}{2}\right\rceil+1)\right)$ vertices are cooled after the propagation step of the first round contained in wave $w$, but before a vertex is manually cooled. 
We will show that this holds by induction on $w$, with the base case $w=2$ already established.
To perform the induction, we will show that at least $c_{w,i} = c_w +(i-1)(m+2)$ vertices are cooled after the propagation step of the $i$th round of wave $w$ for $1 \leq i \leq \left\lceil \frac{m}{2}\right\rceil$, which itself will be shown using an induction.
This is true for $i=1$ by the definition of $c_{w,1}$. 
For the inner induction, assume the statement is true for case $i$, and we will show it is true for the case $i+1$. 
We manually cool a vertex to yield at least $c_{w,i}+1$ cooled vertices, which completes the $i$th round of wave $w$.

We note that during each propagation step of these waves, we have between $m^2 \geq \frac{m^2-2m+1}{4}$ and $m\left(D_\boxtimes -m+3 \right) \leq mn - \frac{m^2+2m+5}{4}$ cooled vertices, where the latter inequality holds since $m\geq 4$.
As such, the conditions on $|S|$ of Theorem~\ref{thm:STRONG:strong_grid_isoperimetry} holds when we take $S$ to be the set of cooled vertices during round $i$ of each wave $2$ to $W+1$ where $1 \leq i \leq \left\lceil \frac{m}{2}\right\rceil$.

Let $\cv_{i+1}$ be the set of cooled vertices at the start of round $i+1$ of wave $w$, and so $|\cv_{i+1}|\geq c_{w,i}+1$ when $i\geq 1$. 
If $m$ divides $|\cv_{i+1}|$, then $|\cv_{i+1}| \geq c_{w,i}+2$, since $m$ does not divide $c_{w,i}+1$ for $1\leq i<\left\lceil \frac{m}{2}\right\rceil$. 
Theorem~\ref{thm:STRONG:strong_grid_isoperimetry} yields that $|N(\cv_{i+1})| \geq m$, so at least $m$ additional vertices will be cooled during the propagation step for a total of at least 
$c_{w,i}+m+2 = c_{w,i+1}$ cooled vertices at the end of the propagation of the $(i+1)$th round of wave $w$, as we needed to show. 

If $m$ does not divide $|\cv_{i+1}|$, then Theorem~\ref{thm:STRONG:strong_grid_isoperimetry} yields that $|N(\cv_{i+1})| \geq m+1$, so the number of cooled vertices after the propagation step of round $i+1$ of wave $w$ is at least $c_{w,i}+1+(m+1) = c_{w,i+1}$. 
This completes the induction, and so our claim that there are at least $c_{w,i} = c_w +(i-1)(m+2)$ cooled vertices after the propagation step of the $i$th round of wave $w$ holds. 

It still remains to be shown that there are at least $c_{w+1}$ cooled vertices after the propagation step of the first round of wave $w+1$. 
At the end of round $\lceil\frac{m}{2}\rceil$ of wave $w$, there were at least $c_w+(\lceil \frac{m}{2}\rceil-1)(m+2)+1$ cooled vertices. 
If $m$ is odd, there are at least $c_w+(\lceil \frac{m}{2}\rceil-1)(m+2)+m+1= c_w+m(\lceil \frac{m}{2}\rceil+1) = c_{w+1}$ cooled vertices.
If $m$ is even, it could be that there were exactly $c_w+(\lceil \frac{m}{2}\rceil-1)(m+2)+1$ cooled vertices, which is not divisible by $m$, so there are at least $c_w+(\lceil \frac{m}{2}\rceil-1)(m+2)+1+m+1= c_w+m(\lceil \frac{m}{2}\rceil+1) = c_{w+1}$ cooled vertices after the propagation step. Otherwise there were at least $c_w+(\lceil \frac{m}{2}\rceil-1)(m+2)+2$, and so after the propagation step there are at least $c_w+(\left\lceil \frac{m}{2}\right\rceil-1)(m+2)+2+m= c_w+m(\left\lceil \frac{m}{2}\right\rceil+1) = c_{w+1}$ cooled vertices.
This completes the induction. 

By following this process, we find ourselves at the end of round  $\lceil \frac{m}{2}\rceil$ of wave $W+1$. 
We allow another propagation round, in which the conditions of Theorem~\ref{thm:STRONG:strong_grid_isoperimetry} also hold.
After this propagation step, the total number of cooled vertices is at least:$$m\left(m+W\left(\left\lceil \tfrac{m}{2}\right\rceil+1\right)\right) 
\ge 
m\left(m+\left(\frac{D_\boxtimes - 2m+3}{\left\lceil\frac{m}{2}\right\rceil+1} \right) \left(\left\lceil \tfrac{m}{2}\right\rceil+1\right)\right)=m\left(D_\boxtimes -m+3 \right),$$ which has occurred over 
$m+\left\lceil\frac{m}{2}\right\rceil \numsteps
\le 
D_\boxtimes - m+3+\left\lceil\frac{m}{2}\right\rceil- \numsteps
$ rounds.
We claim that all uncooled vertices are within distance $m-2$ of these cooled vertices. To see why, note that if there exists an uncooled vertex $v$ such that $N_{m-2}(v)$ is all uncooled, then there must be at least $(m-1)^2$ uncooled vertices and at most $m(n+1-2m)$ cooled vertices in $P_m\boxtimes P_n$.  
Since $m> 3$, this contradicts the fact that there are at least $m\left(D_\boxtimes -m+3 \right)$ cooled vertices, so all uncooled vertices must be distance at most $m-2$ from some cooled vertex. Hence, all vertices will be cooled after $m-2$ rounds and so $\CL(P_m \boxtimes P_n) \leq D_\boxtimes+1- \left\lceil\left(D_\boxtimes - 2m+3\right)/\left(\left\lceil\frac{m}{2}\right\rceil+1\right)\right\rceil+\left\lceil\frac{m}{2}\right\rceil$.
\end{proof}

Note that when $m>n/2$, Theorem~\ref{thm:STRONG:strong_general_bounds} applies to show that $P_m\boxtimes P_n$ is maximally cool. This generalizes a result from \cite{ambrose} which applies only to square strong grids. 

We conclude this section with Table~2, which provides exact values of the cooling number for strong grids derived by computation.

   \begin{table}[htpb!]
\label{paths_strong_table}
       \centering
       \begin{tabular}{|c|c|c|c|c|c|c|c|c|c|}
       \hline
           $\boxtimes$ & $P_2$ & $P_3$ & $P_4$ & $P_5$ & $P_6$ & $P_7$ & $P_8$ & $P_9$ & $P_{10}$ \\
           \hline
           $P_2$ & 2 & 3 & 3 & 4 & 4 & 5 & 5 & 6 & 6\\
           \hline
           $P_3$ & ~ & 3 & 4 & 4 & 5 & 6 & 6 & 7 & 8\\
           \hline
           $P_4$ & ~ & ~ & 4 & 5 & 6 & 6 & 7 & 8 & 8\\
           \hline
           $P_5$ & ~ & ~ & ~ & 5 & 6 & 7 & 8 & 8 & 9\\
           \hline
           $P_6$ & ~ & ~ & ~ & ~ & 6 & 7 & 8 & 9 & 10\\
           \hline
           $P_7$ & ~ & ~ & ~ & ~ & ~ & 7 & 8 & 9 & 10\\
           \hline
           $P_8$ & ~ & ~ & ~ & ~ & ~ & ~ & 8 & 9 & $10$\\
           \hline           
           \end{tabular}
       \caption{The cooling number of $P_i \boxtimes P_j$.}
       \label{tab:STRONG:strong_product_table}
   \end{table}

\section{Cooling lexicographic products}

We next turn to the lexicographic product, the only noncommutative of the four graph products considered in this paper. The structure of $G\circ H$ may be thought of as follows: for each vertex $v$ in $G$, there is a corresponding copy of $H$, call it $H_v$. Moreover, if $uv\in E(G)$, then in $G\circ H$, every vertex in $H_v$ is adjacent to every vertex in $H_u$. Due to this structure, the cooling number of $G\circ H$ depends mostly on $G$.

\begin{theorem} 
\label{thm:LEX:circ_general}
If $G$ is a connected graph of order at least 2, and $H$ is any graph, then
$$\CL(G) \leq \CL(G \circ H) \leq \left\lfloor \frac{3}{2} \CL(G)  \right\rfloor+1.$$
\end{theorem}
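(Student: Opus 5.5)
The lower bound will come from an isometric copy of $G$, and the upper bound from projecting an optimal cooling sequence of $G\circ H$ onto $G$ and counting how many rounds can be "wasted" inside a single copy of $H$. Both parts rest on the distance formula in $G\circ H$, which I would record first. For $u\neq v$ we have $d_{G\circ H}((u,x),(v,y))=d_G(u,v)$. For $u=v$ the distance is $1$ if $xy\in E(H)$ and $2$ otherwise; the value $2$ uses that $G$ is connected of order at least $2$, so $u$ has a neighbour $w$ and $(u,x),(w,z),(u,y)$ is a path.

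\textbf{Lower bound.} Fix any $x_0\in V(H)$. By the distance formula, the subgraph induced on $\{(u,x_0):u\in V(G)\}$ is an isometric copy of $G$. Lemma~\ref{lem:GENE:isometric_subgraph} then gives $\CL(G\circ H)\geq \CL(G)$ immediately.

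\textbf{Upper bound, step 1 (projection).} Take an optimal cooling sequence $(s_1,\dots,s_k)$ of $G\circ H$, write $s_i=(u_i,x_i)$, and let $T=\CL(G\circ H)\le k+1$. Two sources in the same copy $H_u$ are at distance at most $2$. By Lemma~\ref{lem:GENE:cooling_sequence_equation} their indices therefore differ by exactly $1$, so each copy receives at most two sources, and when it receives two they are consecutive. Delete the second source of each such repeated pair; call the number of deletions $p$. The remaining first coordinates $w_1,\dots,w_\ell$ are distinct vertices of $G$ with $\ell+p=k$. Since deleting entries only enlarges index gaps, Lemma~\ref{lem:GENE:cooling_sequence_equation} shows that $(w_a)$ satisfies the cooling condition in $G$. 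A valid sequence can be extended to a full cooling process, so $\CL(G)\geq \ell$. In addition, Theorem~\ref{thm:GENE:diam_bounds} applied to $G\circ H$, together with Corollary~\ref{cor:GENE:another_diam_bound} and $\diam(G\circ H)\le\max\{\diam(G),2\}$, bounds $T$ by roughly $2\CL(G)$. That alone is too weak, so the repeats must be controlled.

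\textbf{Upper bound, step 2 (the main obstacle).} The key claim I would aim for is $p\le \lfloor \ell/2\rfloor$, possibly up to an additive constant absorbed by the ``$+1$''. Combined with step 1 this gives
\[T\le k+1=\ell+p+1\le \lfloor 3\ell/2\rfloor+1\le \lfloor \tfrac32\CL(G)\rfloor+1.\]

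To prove the claim I would exploit how fast copies fill up. Once $H_u$ contains a cooled vertex, every copy $H_w$ with $w\sim u$ is entirely cooled in the next propagation step, and $H_u$ itself is entirely cooled within two rounds. So a repeated pair in $H_u$ at rounds $i,i+1$ forces all of $N_G[u]$ to be fully cooled by round $i+2$. I would then show that the next source $s_{i+2}$ cannot also open a repeated pair. The intended argument is a charging scheme: assign to each repeated pair the following non-repeated source, and show via the cooling condition measured against $w_{a}$ (distance at least $3$ from $u$) that this charge is injective.

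If that injectivity fails in general, my fallback is to bound $T$ directly by an analogue of the proof of Theorem~\ref{thm:GENE:eccentricity}. I would follow the ball $N_r[u_1]$ in $G$, note that the cooled region in $G\circ H$ contains the full copies over $N_{r-1}[\{w_a\}]$, and show that each repeat delays the front by at most one round per two rounds of progress.

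This charging step is where I expect the real difficulty, because repeats occurring in different, far-apart copies are not obviously prevented from alternating. I would first test the claim on $G=P_n$ with $H=\overline{K}_2$ to see the extremal pattern of repeats.
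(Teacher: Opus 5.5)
\textbf{Lower bound and Step~1.} Your lower bound is correct and is exactly the paper's: the copy of $G$ on a fixed second coordinate is isometric. Step~1 of the upper bound is also correct. Sources sharing a first coordinate must be consecutive, and deleting the second source of each such pair leaves a valid sequence of length $\ell$ in $G$.

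\textbf{The gap is Step~2.} The claim $p\le\lfloor \ell/2\rfloor$ is false, even for optimal sequences. So is the intermediate assertion that $s_{i+2}$ cannot open a new repeated pair. For nonadjacent $x,y\in V(H)$, two back-to-back pairs $(a,x),(a,y),(b,x),(b,y)$ only require $d_G(a,b)\ge 4$.

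The paper's Theorem~\ref{thm:LEX:centipede_lex} gives a counterexample. The sequence $(v_1,x),(v_1,y),(w_4,x),(w_4,y),\ldots,(w_{2n},x),(w_{2n},y)$ is optimal for $G_{2n+1}\circ H$, with $T=2n+1$, and has $\ell=p=n$. Your charging map has nothing to charge to, and no additive constant repairs the claim. Even your intermediate inequality $T\le\lfloor 3\ell/2\rfloor+1$ fails there.

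The underlying problem is that deleting repeats can discard half of the sources. With only $\ell\le\CL(G)$ available, you cannot beat $T\le 2\CL(G)+1$. In this example the theorem holds only because $\CL(G_{2n+1})=\lceil (3n+1)/2\rceil$ is much larger than $\ell=n$, which a pure projection cannot detect. The front-tracking fallback, as sketched, does not supply the missing estimate either.

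\textbf{The missing idea} is to \emph{lengthen} the sequence in $G$ rather than bound $p$. The paper parses the sequence for $G\circ H$ into blocks:
\begin{itemize}
\item a source not followed by a repeat contributes $u_i$;
\item a pattern $AAB$ (index $i+1$ a repeat, $i+3$ not) contributes $u_i,u_{i+2}$;
\item a pattern $AABB$ (indices $i+1$ and $i+3$ both repeats) contributes $u_i,z,u_{i+2}$. Here $z\neq u_i$ is a vertex whose copy $H_z$ first receives a cooled vertex by propagation in round $i+2$.
\end{itemize}

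Such a $z$ exists. Along a shortest $u_i$--$u_{i+2}$ path, the first round in which a copy contains a cooled vertex changes by at most $1$ per edge. It is at most $i+1$ next to $u_i$, and at least $i+3$ next to $u_{i+2}$ because $s_{i+2}$ and $s_{i+3}$ are valid.

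This $z$ behaves like a source of round $i+2$: $d_G(u_j,z)\ge i+2-j$ for $j\le i+1$, and $d_G(u_j,z)\ge j-i-1$ for $j\ge i+2$. Each block contributes at most as many entries as it has indices, and an $AABB$ block drops index $i+3$ (with $u_{i+3}=u_{i+2}$). So the positions in the new sequence are compressed enough to satisfy the cooling condition.

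Every block then yields at least $\tfrac23$ of its length, so $\CL(G)\ge\tfrac23(k-1)$. A separate treatment of the case $T=k+1$ then gives the stated bound.
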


\begin{proof}
Let $(\vA_{1},\vA_{2},\dots,\vA_{\mA})$ be an optimal cooling sequence for $G$.

For the lower bound, let $v\in V(H)$ and consider the subgraph of $G \circ H$ induced by $\{(u,v) : u\in V(G)\}$. This subgraph is isomorphic to $G$, and $d_{G\circ H}((\vA,v),(w,v)) = d_G(\vA,w)$. Note that $d_G(\vA_{i},\vA_{j}) > i-j$ for all $1\leq j <i\leq \mA$, so we also have the inequality $d_{G\circ H}((\vA_{i},v),(\vA_{j},v)) > i-j$. Thus, $((\vA_{1},v),(\vA_{2},v),\dots,(\vA_{\mA},v) )$ satisfies the cooling condition in $G\circ H$, so we have found a cooling sequence of length $\mA$ for $G\circ H$. 

If $\CL(G)=\mA$, then $\CL(G \circ H) \geq \mA=\CL(G)$ and we are done. Otherwise, $\CL(G)=\mA+1$ and there is some uncooled $\vA_{\infty} \in V(G)$ at the end of round $\mA$ using the cooling sequence for $G$ above. For $1\leq j\leq \mA$, we have $\mA-j<d_G(\vA_{j},\vA_\infty) = d_{G \circ H}((\vA_{j},v),(\vA_\infty,v))$ and $(\vA_\infty,v)$ is uncooled at the end of round $\mA$ using the given cooling sequence. Thus, $\CL(G\circ H) \geq \mA+1 = \CL(G)$. 

For the upper bound, let $S = \big((\vA_1,\vB_1),(\vA_2,\vB_2),\ldots, (\vA_\mC,\vB_\mC)\big)$ be an optimal cooling sequence for $G \circ H$. 
We say that the index $i$ is a \emph{repeat} if $\vA_{i-1}=\vA_i$. 

We will define a cooling sequence for $G$ using $S$. 
First, we decompose $S$ into subsequences as follows, beginning with $i=1$ and repeating while $i\leq e$. 
\begin{enumerate}
\item If $i+1$ and $i+3$ are both repeats, then let $B_i = \big((\vA_i,\vB_i),$ $(\vA_{i+1},\vB_{i+1}),$ $(\vA_{i+2},\vB_{i+2}),$ $(\vA_{i+3},\vB_{i+3})\big)$. Increment $i$ by $4$ and repeat.
\item If $i<\mC-1$ and $i+1$ is a repeat but $i+3$ is not, then let $B_i = \big((\vA_i,\vB_i),$ $(\vA_{i+1},\vB_{i+1}),$ $(\vA_{i+2},\vB_{i+2})\big)$. Increment $i$ by $3$ and repeat.  
\item If $i+1$ is not a repeat or $i+1=\mC$, then let $B_i = \big((\vA_i,\vB_i)\big)$. Increment $i$ by $1$ and repeat. 
\end{enumerate}
When $i>e$, then we have decomposed the given cooling sequence for $G\circ H$ into subsequences that can be concatenated to recover the original cooling sequence. 

We construct a cooling sequence for $G$ by processing each of the subsequences $B_i$, beginning with $B_1$, as follows. 
\begin{enumerate}
\item If $B_i = \big((\vA_i,\vB_i)\big)$, then we add $\vA_i$ as the next vertex in the cooling sequence for $G$, except in the case that $i+1=\mC$ and $i+1$ is a repeat, in which case we do not add a source to the cooling sequence for $G$. Increment $i$ by $1$ and repeat if $i<\mC$.
\item If $B_i = \big((\vA_i,\vB_i),(\vA_{i+1},\vB_{i+1}),(\vA_{i+2},\vB_{i+2})\big)$, then add $\vA_i$ and $\vA_{i+2}$ as the next two sources in the cooling sequence for $G$. Increment $i$ by $3$ and repeat if $i<\mC-2$. 
\item If $B_i = \big((\vA_i,\vB_i),$ $(\vA_{i+1},\vB_{i+1}),$ $(\vA_{i+2},\vB_{i+2}),$ $(\vA_{i+3},\vB_{i+3})\big)$, then add $\vA_{i}$, $x$, $\vA_{i+2}$ as the next three sources in the cooling sequence for $G$, where $x$ is any vertex such that a vertex of the form $(x,v)$ is cooled in the propagation step of round $i+2$ of the cooling process on $G\circ H$ using the given cooling sequence. Increment $i$ by $4$ and repeat if $i<\mC-3$. 
\end{enumerate}

Note that in each case, the number of sources added to the cooling sequence for each subsequence is at least $\frac{2}{3}$ of the number of vertices in the corresponding subsequence $B_i$, except perhaps for the last subsequence. 

Now, we show that the sequence constructed above is a valid cooling sequence for $G$. We note any $\vA_i$ and $\vA_j$ in the constructed sequence satisfy $\vA_i\neq \vA_j$. Moreover, they satisfy the cooling condition because $d_G(\vA_i,\vA_j) = d_{G\circ H}\big((\vA_i,\vB_i),(\vA_j,\vB_j)\big)$. Thus, we need only consider any vertex $x$ added in case (3). Suppose the vertex $x$ is added to the sequence from the subsequence $B_i$, so $(x,v)$ is cooled in the propagation step of round $i+2$ in $G\circ H$. For a vertex $u_j$ in the constructed sequence, we have that $$ d_G(u_j,x) = d_{G\circ H}\big((u_j,v_j),(x,v)\big) \geq \begin{cases}
        |i+2-j| & \text{if}~ j<i+2\\
        |i+2-j|+1 & \text{if}~j\geq i+2.
        \end{cases}$$
In particular, we get that $d_G(u_i,x) \geq 2$ and $d_G(u_{i+2},x) = d_G(u_{i+3},x) \geq 2$. 
The difference in position index in the constructed cooling sequence for $G$ between $x$ and $u_j$, with $j<i$, is at most $i+1-j$. Similarly, if $j>i+3$, the difference in position index is at most $|i+2-j|$. It follows that $u_j$ and $x$ satisfy the cooling condition in $G$. 

Finally, suppose that the vertex $x$ is added to the sequence from the subsequence $B_i$ and the vertex $x'$ from the subsequence $B_j$, with $i<j$. In $G\circ H$, the vertex $(x,v)$ cools by propagation in round $i+2$ and $(x',v')$ cools by propagation in round $j+2$. Thus, $$d_G(x,x') = d_{G\circ H}\big((x,v),(x',v')\big) \geq (j+2)-(i+2) = j-i.$$ The difference in position indices between $x$ and $x'$ is at most $j-i-1$, and so $x$ and $x'$ satisfy the cooling condition in $G$. Therefore, the sequence constructed above is a valid cooling sequence for $G$. 
    
The cooling sequence constructed above contains at least $\frac{2}{3}(\mC-1)$ sources. In the case that $\mC = \CL(G\circ H)$, then we have $\CL(G\circ H) \leq \frac{3}{2}\CL(G)+1$, and we are done. If instead $\mC = \CL(G\circ H)-1$, then there exists some $(u_\infty,v_\infty) \in V(G \circ H)$ such that $(u_\infty,v_\infty)$ is uncooled at the end of round $e$. We shall consider two subcases based on whether or not $u_e$ is a repeat, and show that the upper bound of the theorem holds in both cases.

 If $u_e$ is a repeat, then we must have $u_e \neq u_\infty$, as otherwise $(u_\infty,v_\infty)$ would be cooled at the end of round $e$ by propagation from $(u_{e-1},v_{e-1}).$ Because $(u_\infty,v_\infty)$ cools by propagation in round $e+1$, we must have  
    $
    d_{G \circ H}\big((u_\infty,v_\infty),(u_i,v_i)\big) \geq |e-i|+1
    $,
    which implies $d_G(u_\infty,u_i) \geq |e-i|+1$ for all $1 \leq i \leq e.$ Therefore, $u_\infty$ is not cooled by propagation at the end of round $e$ by any $u_i$ in the cooling sequence. Now, suppose that $x$ is added to the cooling sequence from the subsequence $B_i$, so $(x,v)$ is cooled in the propagation step of round $i+2$ in $G\circ H$. We must therefore have 
    $$
    d_{G\circ H}\big((u_\infty,v_\infty),(x,v)\big) \geq |e+1-(i+2)| = |e-(i+1)|,
    $$
    which implies $d_G(u_\infty,x) \geq |e-(i+1)|.$ Because $x$ is cooled in round $i+1$ of the cooling sequence or earlier, we therefore have that $u_\infty$ is not cooled by propagation from any $x$ in the cooling sequence. This finally implies that $u_\infty$ must be uncooled after each of the $\frac{2}{3}(e-1)$ sources in the original sequence are cooled, so cooling must last at least $\frac{2}{3}(e-1)+1$ rounds. Therefore, we have  $\CL(G) \geq \frac{2}{3}(\CL(G\circ H)-2)+1$, which implies $\CL(G \circ H) \leq \frac{3}{2}\CL(G)+\frac{1}{2} \leq \frac{3}{2}\CL(G)+1$.
    
If $u_e$ is not a repeat, then the cooling sequence contains at least $\frac{2}{3}e = \frac{2}{3}(\CL(G \circ H)-1)$ sources and we have that $\CL(G \circ H) \leq \frac{3}{2}\CL(G)+1$ as before. The cooling number must be an integer, so we take the floor to obtain the result. \end{proof}

The upper bound in Theorem \ref{thm:LEX:circ_general} appears to be within a constant additive factor of being tight. For example, consider the graph in Figure~\ref{fig:LEX:lex_upper_bound}. This graph, $G$, has $\CL(G) = 9$, and its lexicographic product with $\overline{K}_2$ has cooling number 13, one less than the upper bound given by Theorem \ref{thm:LEX:circ_general}. 

\begin{figure}[ht]
\centering
\begin{tikzpicture}[thick, every node/.style={circle, draw=black, fill=black, inner sep=1}]

\node (n1) at (0,0){};
\node (n2) at (1,0){};
\node (n3) at (2,0){};
\node (n4) at (3,0){};
\node (n5) at (4,0){};
\node (n6) at (5,0){};
\node (n7) at (6,0){};
\node (n8) at (7,0){};
\node (n9) at (8,0){};
\node (n10) at (9,0){};
\node (n11) at (10,0){};
\node (n12) at (11,0){};

\node (y1) at (0.5,1){};
\node (x3) at (2,1){};
\node (y4) at (3.5,1){};
\node (x6) at (5,1){};
\node (y7) at (6.5,1){};
\node (x9) at (8,1){};
\node (y10) at (9.5,1){};
\node (x12) at (11,1){};

\draw (n1) -- (n2);
\draw (n2) -- (n3);
\draw (n3) -- (n4);
\draw (n4) -- (n5);
\draw (n5) -- (n6);
\draw (n6) -- (n7);
\draw (n7) -- (n8);
\draw (n8) -- (n9);
\draw (n9) -- (n10);
\draw (n10) -- (n11);
\draw (n11) -- (n12);
\draw (y1) -- (n1);
\draw (y1) -- (n2);
\draw (x3) -- (n3);
\draw (y4) -- (n4);
\draw (y4) -- (n5);
\draw (x6) -- (n6);
\draw (y7) -- (n7);
\draw (y7) -- (n8);
\draw (x9) -- (n9);
\draw (y10) -- (n10);
\draw (y10) -- (n11);
\draw (x12) -- (n12);

\end{tikzpicture}
\caption{A graph $G$ with $\CL(G) = 9$ whose lexicographic product with $\overline{K}_2$ has cooling number $\CL(G \circ \overline{K}_2) = \left\lfloor \frac{3}{2}\CL(G)\right\rfloor  = 13$.}
\label{fig:LEX:lex_upper_bound}
\end{figure}

We note the difference between the cooling number of the lexicographic product $G\circ H$ and the burning number of the same graph, which satisfies $b(G\circ H) \in \{b(G),b(G)+1\}$, as shown in \cite{prod}. 

Next, we provide a general class of graphs attaining the lower bound in Theorem~\ref{thm:LEX:circ_general}. 

\begin{corollary}
\label{cor:LEX:circ_diam}
If $G$ is a connected, maximally cool graph with $\diam(G) \geq 2$, then $\CL(G \circ H) = \CL(G)$ for any graph $H$. 
\end{corollary}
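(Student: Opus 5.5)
The plan is to squeeze $\CL(G\circ H)$ between two bounds that coincide. Theorem~\ref{thm:LEX:circ_general} gives $\CL(G)\le \CL(G\circ H)$. Maximal coolness means $\CL(G)=\diam(G)+1$. For the upper bound, Theorem~\ref{thm:GENE:diam_bounds} gives $\CL(G\circ H)\le \diam(G\circ H)+1$. So it suffices to show $\diam(G\circ H)=\diam(G)$ whenever $G$ is connected with $\diam(G)\ge 2$.

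To compute the diameter, recall that distances in $G\circ H$ are governed by $G$, except between two vertices of the same copy $H_u$. If $u\ne w$, then $d_{G\circ H}((u,x),(w,y))=d_G(u,w)$. One direction holds because a path $u=z_0,z_1,\dots,z_k=w$ in $G$ lifts to the path $(z_0,x),(z_1,y),\dots,(z_k,y)$; here we use that consecutive copies over an edge of $G$ are completely joined. The other direction holds because projecting a path in $G\circ H$ to $G$ gives a walk in which consecutive vertices are equal or adjacent, so it is no shorter than $d_G(u,w)$.

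If $u=w$, then $(u,x)$ and $(u,y)$ are either adjacent or at distance exactly $2$. Since $G$ is connected of order at least $2$, $u$ has a neighbour $z$, and $(u,x),(z,x),(u,y)$ is a path. Hence every such pair is within distance $2\le \diam(G)$. This is precisely where the hypothesis $\diam(G)\ge 2$ is needed. Pairs in distinct copies realize every distance of $G$, and all distances are at most $\diam(G)$, so $\diam(G\circ H)=\diam(G)$.

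Chaining the inequalities gives
\[
\diam(G)+1=\CL(G)\le \CL(G\circ H)\le \diam(G\circ H)+1=\diam(G)+1,
\]
which forces equality throughout. The argument has no real obstacle. The only point needing care is the same-copy distance, which can be $2$ when $H$ is not complete, and the hypothesis $\diam(G)\ge 2$ exists to absorb exactly that case. When $\diam(G)=1$ the corollary can fail, for example $G=K_2$ with $H=\overline{K}_2$.
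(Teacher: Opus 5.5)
Your proof is correct and follows the paper's own argument: $\diam(G\circ H)=\diam(G)$ because $\diam(G)\ge 2$, so Theorem~\ref{thm:GENE:diam_bounds} gives $\CL(G\circ H)\le \diam(G)+1=\CL(G)$, and Theorem~\ref{thm:LEX:circ_general} gives the reverse inequality. The only difference is that you write out the distance computation in $G\circ H$, which the paper asserts without proof.

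One side remark is wrong. $K_2\circ\overline{K}_2\cong C_4$, and $\CL(C_4)=2=\CL(K_2)$: after the propagation step of round~2, only the antipode of the first source is uncooled, and it must be cooled manually. So that example is not a counterexample. A genuine counterexample with $\diam(G)=1$ is $K_2\circ\overline{K}_3\cong K_{3,3}$, whose cooling number is $3\neq\CL(K_2)=2$.
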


\begin{proof}
We have $\diam(G \circ H) = \diam(G)$, since $\diam(G)\geq 2$. By Theorem~\ref{thm:GENE:diam_bounds}, we must have $\CL(G\circ H) \leq \diam(G\circ H)+1 = \CL(G)$. Equality follows from Theorem~\ref{thm:LEX:circ_general}. 
\end{proof}

Note that Corollary~\ref{cor:LEX:circ_diam} does not depend on $H$. In general, $\CL(G \circ H)$ depends only on $G$ and on whether $\CL(H) \leq 2$, provided that $\CL(G)$ is sufficiently large. 

\begin{theorem} \label{thm:LEX:H}
Let $G$ be a connected graph of order at least 2 and let $H_1$ and $H_2$ be graphs each containing a pair of nonadjacent vertices. We then have the following.
\begin{enumerate}
\item If $\CL(H_1),\CL(H_2) = 2$ or $\CL(H_1),\CL(H_2)\geq 3$, then $\CL(G \circ H_1) = \CL(G \circ H_2).$\\

\item If $\CL(H_1)=2$ and $\CL(H_2)\geq 3$, then $\CL(G \circ H_1) \leq \CL(G \circ H_2) \leq \CL(G \circ H_1)+1$. 
\end{enumerate}
\end{theorem}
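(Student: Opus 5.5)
The plan is to reduce everything to the distance structure of $G\circ H$. Since $G$ is connected of order at least $2$, for $(u,x),(v,y)\in V(G\circ H)$ we have
\[
d_{G\circ H}((u,x),(v,y))=\begin{cases} d_G(u,v) & \text{if } u\neq v,\\ 1 & \text{if } u=v,\ xy\in E(H),\\ 2 & \text{if } u=v,\ x\neq y,\ xy\notin E(H).\end{cases}
\]
The third case holds because we can route through a copy $H_w$ with $w$ adjacent to $u$. So $H$ enters only through adjacency inside a single copy $H_u$. By Lemma~\ref{lem:GENE:cooling_sequence_equation}, two sources in the same copy must satisfy $|j-i|+1\le 2$. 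Hence each copy contains at most two sources, and if it contains two, they are nonadjacent in $H$ and chosen in consecutive rounds.

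I would also record the lower-bound criterion used throughout: cooling lasts at least $e+1$ rounds if there is a valid sequence $(s_1,\dots,s_e)$ and a witness vertex $w$ with $d(w,s_i)\ge e-i+1$ for all $i$. Running the process with these sources first, $w$ is still uncooled after round $e$.

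Next I would characterize $\CL(H)\ge 3$ for a graph $H$ with a nonadjacent pair. The claim is that this holds iff some vertex $a$ has at least two non-neighbours. If $a$ has non-neighbours $b\neq c$, cool $a$, then $b$ in round $2$; the vertex $c$ is still uncooled at the end of round $2$, so a third round occurs. Conversely, if every vertex has at most one non-neighbour, then after $a$ is cooled and propagation runs in round $2$, at most one vertex remains uncooled. It is cooled manually, so cooling ends in round $2$. Thus $\CL(H)=2$ means every vertex has exactly one non-neighbour among those with any, and $\CL(H)\ge 3$ gives a vertex with two non-neighbours.

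The core step is a transfer lemma. Take a valid sequence and witness in $G\circ H_1$ and build one in $G\circ H_2$ with the same first coordinates. Inside each copy, send the first source to a fixed vertex $a$ and a second source (if any) to a non-neighbour $b$ of $a$. When $\CL(H_2)\ge3$, choose $a$ with two non-neighbours. Cross-copy distances are unchanged. Within a copy, the required distances are $1$ for distinct vertices and $2$ for nonadjacent ones, and these are reproduced. So validity is preserved, and only the witness $w=(u,z)$ needs care. If $H_u$ contains no source, any $z$ works. If it contains one source at round $i\le e-1$, we need $z$ nonadjacent to it, which is available. If the source is at round $e$, any other vertex works. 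The copy cannot hold sources at rounds $\le e-2$ together with a witness, since that would need distance $\ge 3$. The remaining case, which I expect to be the main obstacle, is a copy holding two sources at rounds $e-1$ and $e$ with the witness in the same copy. Then $z$ must be a non-neighbour of the round-$(e-1)$ source distinct from the round-$e$ source, so that source needs two non-neighbours.

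Under case (1), this configuration is either available in both $H_1,H_2$ (both have $\CL\ge3$) or in neither (both have $\CL=2$). So transfer works in both directions and $\CL(G\circ H_1)=\CL(G\circ H_2)$.

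For (2), transfer from $H_1$ to $H_2$ always succeeds, giving $\CL(G\circ H_1)\le\CL(G\circ H_2)$. For the other direction, take an optimal process on $G\circ H_2$ with sequence of length $e$, so that $\CL(G\circ H_2)\le e+1$. Its sources alone transfer to a valid sequence of length $e$ in $G\circ H_1$, since dropping the witness requirement removes the only obstruction. This gives $\CL(G\circ H_1)\ge e\ge\CL(G\circ H_2)-1$.
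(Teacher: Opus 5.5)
Your proposal is correct and follows essentially the paper's argument. Like the paper, you keep the first coordinates, send the first source in each copy of $H$ to one fixed vertex and a consecutive second source to a non-neighbour of it, and then check the witness. The only delicate case is a copy holding the round-$(e-1)$ and round-$e$ sources together with the witness, which is exactly where $\CL\ge 3$ enters. Your explicit distance formula and your characterization of $\CL(H)\ge 3$ (some vertex has two non-neighbours) make precise what the paper does with the first two sources $w,w'$ and a round-$3$ vertex $\hat w$ of a cooling sequence for $H_2$.
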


\begin{proof}
Let $B_1 = \big((\vA_1,\vB_1),(\vA_2,\vB_2),\dots,(\vA_\mC,\vB_\mC)\big)$ be an optimal cooling sequence for the graph $G\circ H_1$. Note that $\mC\geq 2$. We have two cases. 

\smallskip

\noindent \emph{Case 1:} $\vA_1,\vA_2,\dots,\vA_\mC$ are distinct vertices of $G$. 

\smallskip

Let $w$ and $w'$ be the first and second sources in a cooling sequence for $H_2$. Since $H_2$ is not a clique, there exists a cooling sequence of length at least 2 for $H_2$. We show that $B_2 = \big((\vA_1,w),(\vA_2,w),\dots,(\vA_\mC,w)\big)$ 
satisfies the cooling condition in $G\circ H_2$ because 
$$
d_{G\circ H_2}((\vA_i,w),(\vA_j,w)) = d_G(\vA_i,\vA_j) = d_{G\circ H_1}((\vA_i,\vB_i),(\vA_j,\vB_j)) > |i-j|.$$ 

Thus, $\CL(G\circ H_2) \geq e$. If $\CL(G \circ H_1) = \mC$, then we are done. Otherwise, we have that $\CL(G \circ H_1)=\mC+1$ and there is some $(x,y)$ that cools in round $\mC+1$ due to $B_1$. Therefore, we have that 
$d_{G \circ H_1}((\vA_i,\vB_i),(x,y)) \geq \mC+1-i$ for $i=1,2,\dots,\mC$. There are three sub-cases for $x$. 

\smallskip

\emph{Case 1.a:} First, if $x\not\in \{\vA_1,\vA_2,\dots,\vA_\mC\}$ and $z\in V(H_2)$, then 
$$
d_{G\circ H_2}((x,z),(\vA_i,w)) = d_G(x,\vA_i) = d_{G\circ H_1}((x,y),(\vA_i,\vB_i)) \geq \mC+1-i,
$$
so $(x,z)$ is uncooled at the end of round $\mC$ with respect to $B_2$. 

Now suppose $x\in \{\vA_1,\vA_2,\dots,\vA_\mC\}$. Observe that every $(\vA_i,v)$ in $G\circ H_1$ is cooled after the propagation step of round $i+2$. Hence, $(x,y)$ is uncooled at the end of round $\mC$, so $x\in \{\vA_{\mC-1},\vA_\mC\}$. This yields the last two sub-cases for $x$.

\smallskip

\emph{Case 1.b:} If $x = \vA_{\mC-1}$, then 
$
d_{G\circ H_1}((\vA_{\mC-1},\vB_{\mC-1}),(x,y)) = d_{H_1}(\vB_{\mC-1},y) \geq 2
$.
In $G\circ H_2$, consider $(\vA_{\mC-1},w')$. Note that
$d_{G\circ H_2}((\vA_{\mC-1},w),(\vA_{\mC-1},w')) = d_{H_2}(w,w') \geq 2$, which means that $(x,w')$ is uncooled at the end of round $\mC$ with respect to $B_2$. 

\smallskip

\emph{Case 1.c:} Finally, if $x=\vA_\mC$, then we have that $d_{G \circ H_1}((\vA_\mC,\vB_\mC),(x,y)) = d_{H_1}(\vB_\mC,y) \geq 1$ and  $d_{G\circ H_2}((\vA_\mC,w),(\vA_\mC,w')) = d_{H_2}(w,w')\geq 2$, so $(x,w')$ is uncooled after round $\mC$ with respect to $B_2$ and $\CL(G\circ H_1) \leq \CL(G\circ H_2)$. Case~1 now follows.

\smallskip

\noindent\emph{Case 2:} Not all of $\vA_1,\vA_2,\dots,\vA_\mC$ are distinct. 

\smallskip

If $\vA_i = \vA_j$ with $i<j$, then $i+1=j$ is required to satisfy the cooling condition.

As before, let $w$ and $w'$ be the first two sources in a cooling sequence $B_2$ for $H_2$. For $i=1,2,\dots,\mC$, we let $$g(\vB_i) = \begin{cases}
w & \text{if~}\vA_i \neq \vA_{i-1}, \\
w' & \text{if~}\vA_i = \vA_{i-1}.
\end{cases}$$
We will show that $(\vA_i,g(\vB_i))$ satisfies the cooling condition in $G \circ H_2$ for each $i$.

If $|i-j| > 1$, then 
$
d_{G\circ H_2}((\vA_i,g(\vB_i)),(\vA_j,g(\vB_j))) = d_G(\vA_i,\vA_j)>|i-j|
$. 
If $j=i+1$ and $\vA_{i+1}\neq \vA_i$, then 
$
d_{G\circ H_2}((\vA_i,g(\vB_i)),(\vA_{i+1},g(\vB_{i+1}))) = d_G(\vA_i,\vA_{i+1}) > 1
$.
Finally, if $\vA_i = \vA_{i+1}$, then 
$
d_{G\circ H_2}((\vA_i,g(\vB_i)),(\vA_{i+1},g(\vB_{i+1}))) = d_{H_2}(g(\vB_i),g(\vB_{i+1})) = d_{H_2}(w,w')>1$, 
since $w,w'$ are consecutive sources in a cooling sequence for $H_2$. 

Thus, $\CL(G\circ H_2) \geq \mC$. If $\CL(G \circ H_1) = \mC$, then we are done. Otherwise, we have that $\CL(G \circ H_1)=\mC+1$ and there is some $(x,y)$ that cools in round $\mC+1$ due to $B_1$. Therefore, we have that
$
d_{G \circ H_1}((\vA_i,\vB_i),(x,y)) \geq \mC+1-i
$ for $i=1,2,\dots,\mC$. There are three subcases for $x$. 

\smallskip

\emph{Case 2.a:} First, suppose $x\not\in \{\vA_1,\vA_2,\dots,\vA_\mC\}$. If $z\in V(H_2)$, then 
$$
d_{G\circ H_2}((x,z),(\vA_i,g(\vB_i))) = d_G(x,\vA_i) = d_{G\circ H_1}((x,y),(\vA_i,\vB_i)) \geq \mC+1-i,
$$ 
so $(x,z)$ is not cooled at the end of round $\mC$ with respect to $B_2$. 

For the remaining two sub-cases, we must have $x\in \{\vA_{\mC-1},\vA_{\mC}\}$. The argument depends on whether or not $\vA_{\mC-1}$ and $\vA_{\mC}$ are in fact the same vertex in $G$. 

\smallskip

\emph{Case 2.b:} Suppose that $\vA_{\mC-1}\neq \vA_\mC$. If $x=\vA_{\mC-1}$, then $\vA_{\mC-2}\neq\vA_{\mC-1},$ implying that $g(\vB_{\mC-1}) = w$. Now, for $x\in \{\vA_{\mC-1},\vA_\mC\}$, the same argument as in \emph{Case 1} shows that $(x,w')$ is uncooled at the end of round $\mC$ with respect to $B_2$. Thus, $\CL(G\circ H_1) \leq \CL(G\circ H_2)$. 

\smallskip

\emph{Case 2.c:} Suppose that $x = \vA_{\mC-1} = \vA_\mC$. If $\CL(H_1)=2$, then this cannot happen because every $(\vA_\mC,v)$ is cooled by the end of round $\mC$. Hence, $\CL(H_1) \geq 3$. If $\CL(H_2) = 2$, then every $(\vA_\mC,z)$ is cooled at the end of round $\mC$ by $B_2$. Therefore, we have that $\CL(G\circ H_1)-1 \leq \mC \leq \CL(G \circ H_2)$ when $\CL(H_1) \geq 3$ and $\CL(H_2) = 2$. Finally, suppose that $\CL(H_2) \geq 3$ and let $\hat{w}$ be a vertex of $H_2$ that cools in round 3 with respect to $B_2$. We consider the two remaining cases for $(x,y)$ that is uncooled at the end of round $\mC$ in $G\circ H_1$. We have assumed $x = \vA_{\mC-1} = \vA_\mC$, and thus 
\begin{align*}
    d_{G\circ H_1}((\vA_{\mC-1},\vB_{\mC-1}),(x,y)) = d_{H_1}(\vB_{\mC-1},y) &\geq 2\\
    d_{G\circ H_1}((\vA_{\mC},\vB_{\mC}),(x,y)) = d_{H_1}(\vB_\mC,y) &\geq 1.
\end{align*}
In $G\circ H_2$, consider the vertex $(\vA_{\mC-1},\hat{w})$. Note that
\begin{align*}
    d_{G\circ H_2}((\vA_{\mC-1},g(\vB_{\mC-1})),(x,\hat{w})) = d_{H_1}(w,\hat{w}) &\geq 2\\
    d_{G\circ H_1}((\vA_{\mC},g(\vB_{\mC})),(x,\hat{w})) = d_{H_1}(w',\hat{w}) &\geq 1,
\end{align*}
which means that $(x,\hat{w})$ is uncooled at the end of round $\mC$ with respect to $B_2$. Hence, when $\CL(H_1),\CL(H_2) \geq 3$, we have $\CL(G\circ H_1) \leq \CL(G\circ H_2)$. This completes the proof of \emph{Case 2.c}. 

In summary, if $\CL(H_1) = 2$, then $\CL(G\circ H_1)\leq \CL(G\circ H_2)$, and if $\CL(H_1)\geq 3$, then
\begin{enumerate}
\item if $\CL(H_2) = 2$, then $\CL(G \circ H_1)-1\leq \CL(G\circ H_2)$, and\\
\item if $\CL(H_2) \geq 3$, then $\CL(G\circ H_1) \leq \CL(G \circ H_2)$.
\end{enumerate}

\noindent Swapping the roles of $H_1$ and $H_2$, we derive that $\CL(G\circ H_1) = \CL(G\circ H_2)$ when $\CL(H_1) = \CL(H_2) = 2$ or when $\CL(H_1),\CL(H_2)\geq 3$, and that $\CL(G\circ H_1) \leq \CL(G\circ H_2) \leq \CL(G\circ H_1)+1$ when $\CL(H_1)=2$ and $\CL(H_2) \geq 3$. 
\end{proof}

From Theorem \ref{thm:LEX:H}, the cooling number of $G\circ H$ can be computed using lexicographic product $\CL(G\circ \overline{K}_2)$, if $\CL(H)=2$, or using $\CL(G\circ \overline{K}_3)$, if $\CL(H)\geq 3$. This can greatly speed up computations, particularly if $H$ is large. Moreover, if $H$ is a clique, then $\CL(G \circ H)$ can be one of only two values. 

\begin{theorem}
If $G$ is a graph and $H$ is a clique, and both have order at least 2, then $\CL(G) \leq \CL(G \circ H) \leq \CL(G)+1$. In particular, if there exists a cooling sequence for $G$ of length $\CL(G)$, then $\CL(G \circ H) = \CL(G)+1$. Otherwise, $\CL(G \circ H) = \CL(G)$. 
\end{theorem}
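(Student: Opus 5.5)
The plan is to use the fact that, when $H$ is a clique, distances in $G\circ H$ are essentially distances in $G$. Write $H=K_n$ with $n\ge 2$. Two distinct vertices in the same copy $H_u$ are adjacent, so they are at distance $1$. For $u\neq w$, every vertex of $H_u$ is adjacent to every vertex of $H_{u'}$ whenever $uu'\in E(G)$, so $d_{G\circ H}((u,x),(w,y))=d_G(u,w)$. Let $L$ denote the maximum length of a sequence of vertices of $G$ satisfying the cooling condition of Lemma~\ref{lem:GENE:cooling_sequence_equation}. A process whose cooling sequence has length $e$ lasts either $e$ or $e+1$ rounds. Hence $\CL(G)\in\{L,L+1\}$, and a cooling sequence of length $\CL(G)$ exists exactly when $\CL(G)=L$. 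So it suffices to prove $\CL(G\circ H)=L+1$ always. This gives $\CL(G\circ H)=\CL(G)+1$ if $\CL(G)=L$, and $\CL(G\circ H)=\CL(G)$ if $\CL(G)=L+1$. The bounds $\CL(G)\le \CL(G\circ H)\le \CL(G)+1$ then follow immediately; the lower bound is also Theorem~\ref{thm:LEX:circ_general}.

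\textbf{Upper bound.} Let $((u_1,v_1),\dots,(u_e,v_e))$ be any cooling sequence of $G\circ H$. By the cooling condition, consecutive sources are at distance at least $2$, and more generally $d((u_i,v_i),(u_j,v_j))\ge |i-j|+1\ge 2$. Two vertices in the same copy are at distance $1$, so the $u_i$ are pairwise distinct. Therefore $d_G(u_i,u_j)=d_{G\circ H}((u_i,v_i),(u_j,v_j))\ge |i-j|+1$. Thus $(u_1,\dots,u_e)$ satisfies the cooling condition in $G$, which gives $e\le L$. The process then lasts at most $e+1\le L+1$ rounds.

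\textbf{Lower bound.} Take a sequence $(u_1,\dots,u_L)$ in $G$ satisfying the cooling condition, fix $x\in V(H)$, and lift it to $((u_1,x),\dots,(u_L,x))$. The distances are preserved, so the lift is a valid sequence of sources in $G\circ H$ by Lemma~\ref{lem:GENE:cooling_sequence_equation}. Now pick $y\neq x$ in $H$, which is possible since $|V(H)|\ge 2$. For $i<L$ we have $d_{G\circ H}((u_L,y),(u_i,x))=d_G(u_L,u_i)\ge L-i+1$. Also $(u_L,y)$ is not cooled in round $L$ by propagation from the source $(u_L,x)$, which only becomes cooled in that round's manual step. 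So $(u_L,y)$ is still uncooled at the end of round $L$. Any maximal cooling sequence extending this lift therefore produces a process lasting at least $L+1$ rounds, so $\CL(G\circ H)\ge L+1$.

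The main point needing care is the bookkeeping about when exactly propagation from the last source begins, together with the round-counting subtlety that $\CL$ counts rounds rather than sources. That subtlety is why the criterion is phrased as the existence of a cooling sequence of length $\CL(G)$, which matches the reformulation $\CL(G)=L$ used above. The remaining steps are routine distance checks.
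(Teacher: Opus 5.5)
Your proof is correct and follows essentially the same route as the paper. You lift a longest valid sequence of $G$ into a single $H$-coordinate, so a second vertex of the last copy stays uncooled, and you project any cooling sequence of $G\circ H$ back to $G$, using that same-copy vertices are adjacent (so the $G$-coordinates are distinct) and that distances between different copies equal $d_G$. Your packaging through $L$, the maximum length of a sequence satisfying the cooling condition, together with the identity $\CL(G\circ H)=L+1$, handles both cases at once. The paper instead splits into $\CL(G)=b$ versus $\CL(G)=b+1$ and appeals to Theorem~\ref{thm:LEX:circ_general}.
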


\begin{proof}
Let $(\vA_1,\vA_2,\dots,\vA_\mA)$ be a cooling sequence for $G$ and denote by $H_i$ the copy of $H$ corresponding to $\vA_i$ for $1\leq i \leq \mA$. Recall that $\CL(G\circ H) \geq \CL(G)$ by Theorem~\ref{thm:LEX:circ_general}. 
        
If $\CL(G)=\mA$, then for some $v\in H$, consider the cooling sequence $\big((\vA_1,v),(\vA_2,v),$ $\dots,(\vA_\mA,v)\big)$ for $G\circ H$. The only cooled vertex in $H_\mA$ after round $\mA$ is $(\vA_\mA,v).$ The remaining vertices in $H_\mA$ (which exist since $|V(H)|\geq 2$) cool in round $\mA+1$. Thus, $\CL(G \circ H) \geq \CL(G)+1$. 

Suppose that $\CL(G \circ H) > \CL(G)+1$ and let $\big((y_1,z_1),(y_2,z_2),\dots,(y_k,z_k)\big)$ be a cooling sequence for $G\circ H$ with $k \geq \CL(G)+1$. We claim that $y_1,y_2,\dots,y_k$ must all be distinct vertices. As soon as a source is cooled in some $H_j$ in round $i$, all vertices in $H_j$ are cooled in the propagation step of round $i+1$, as they are all pairwise adjacent. Thus, no vertex in $H_j$ can be manually cooled in round $i+1$. 

We construct a copy of $G$ in $G\circ H$ containing all of the vertices $y_1,y_2,\dots,y_k$. Since $k \geq \CL(G)+1$, this copy of $G$ is cooled before $y_k$ is manually cooled, which is a contradiction. Thus, $k \leq \CL(G)$ and so $\CL(G \circ H) = \CL(G)+1$ when $\CL(G)=\mA$. 

Suppose instead that $\CL(G) = \mA+1$ and that there does not exist a cooling sequence for $G$ of length $\mA+1$. As above, let $\big((y_1,z_1),(y_2,z_2),\dots,(y_k,z_k)\big)$ be a cooling sequence for $G\circ H$ with $k \geq \mA+1$. Again, $y_1,y_2,\dots,y_k$ are distinct vertices of $G$, so we construct a copy of $G\circ H$ containing all of $y_1, y_2, \dots, y_k$. This is a cooling sequence for $G$ of length at least $\mA+1$, a contradiction. Thus, $k \leq \mA$ and $\CL(G\circ H) \leq \mA+1 = \CL(G)$. 
\end{proof}

If $H$ is not a clique, then we identify graphs for which $\CL(G \circ H)$ lies strictly between the bounds given in Theorem~\ref{thm:LEX:circ_general}. 

\begin{theorem} \label{thm:LEX:Pn}
If $n\geq 3$ and $H$ is not a clique, then $$\CL(P_n \circ H) \geq \CL(P_n)+1 = \left\lceil \frac{n+1}{2}\right\rceil +1 .$$
\end{theorem}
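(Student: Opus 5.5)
The plan is to exhibit an explicit cooling sequence in $P_n\circ H$ that lasts at least $\left\lceil\frac{n+1}{2}\right\rceil+1$ rounds. I take $\CL(P_n)=\left\lceil \frac{n+1}{2}\right\rceil$ from \cite{cooling}, so the equality in the statement is not part of what needs proving.

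Write $V(P_n)=[n]$ and, since $H$ is not a clique, fix nonadjacent $w,w'\in V(H)$. Because vertex $1$ has the neighbour $2$ in $P_n$, we get $d_{P_n\circ H}((1,w),(1,w'))=2$. For distinct $a,b\in[n]$ we have $d_{P_n\circ H}((a,x),(b,y))=|a-b|$ regardless of $x,y$.

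The key idea is to spend the extra round at the start, using the non-clique structure of $H$. Concretely:
\begin{itemize}
\item in round $1$, cool $(1,w)$;
\item in round $2$, cool $(1,w')$;
\item in each round $k\ge 3$ with $2k-2\le n$, cool $(2k-2,w)$.
\end{itemize}
In words: we ``waste'' round $2$ inside the copy $H_1$, then march along the path two steps per round, always staying ahead of the propagation front.

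I would check the cooling condition of Lemma~\ref{lem:GENE:cooling_sequence_equation} in three cases:
\begin{itemize}
\item the pair of round-$1$ and round-$2$ sources is at distance $2$;
\item the round-$1$ source and the round-$k$ source are at distance $2k-3\ge k$ for $k\ge 3$, and the round-$2$ source and the round-$k$ source are at distance $2k-3\ge k-1$;
\item two later sources in rounds $i<j$ are at distance $2(j-i)\ge j-i+1$.
\end{itemize}

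Next I count rounds. The sources occupy rounds $1,2,\dots,K$ with $K=\lfloor (n+2)/2\rfloor$, the largest $k$ with $2k-2\le n$. After round $k\ge 3$ the cooled path-positions are exactly those in $[1,2k-2]$: the source at $2k-4$ has propagated to $2k-3$, and nothing lies beyond $2k-2$. So if $2K-2<n$, position $n$, which exists only when $n$ is odd, is still uncooled after round $K$, and cooling lasts $K+1$ rounds. I would then verify in both parity cases that the number of rounds is at least $\left\lceil\frac{n+1}{2}\right\rceil+1$. For $n$ odd this gives $K+1=\frac{n+1}{2}+1$.

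For $n$ even, the last source is $(n,w)$. Here I would use the same trick as in the clique theorem: the remaining vertices $(n,z)$ with $z\ne w$ are still uncooled after round $K$, since their distance to $(n,w)$ is at least $1$ and their distance to earlier sources is at least $2$. They are therefore cooled in round $K+1=\frac{n}{2}+2$, and $\frac{n}{2}+2\ge\left\lceil\frac{n+1}{2}\right\rceil+1$.

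The main obstacle is this endgame bookkeeping, and in particular the small cases $n=3,4$, which I would check directly. There is a fallback for when the final copy does not leave a late vertex, which happens if $|V(H)|=1$; this is excluded because $H$ is not a clique. In that situation I would shift the marching sources by one position to leave vertex $n$ uncooled.
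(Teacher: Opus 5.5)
Your proposal is correct and is essentially the paper's own proof. It uses the same sequence $(1,w),(1,w'),(4,w),(6,w),\dots$, which spends round~$2$ inside the copy of $H$ at position~$1$; position~$n$ is still uncooled after the last source when $n$ is odd, and the rest of the final copy of $H$ cools one round late when $n$ is even. One small point: in the even case you need $d((n,z),s_i)\ge K-i+1$ for the round-$i$ source $s_i$, not merely distance at least $2$. This already follows from your observation that earlier sources have propagated only up to position $n-1$, and your closing fallback about $|V(H)|=1$ is vacuous.
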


\begin{proof}
Let $V(P_n) = \{\vB_1,\vB_2,\dots,\vB_n\}$, with $\vB_i\vB_{i+1}\in E(P_n)$ for $1\leq i<n$, and let $x,y$ be nonadjacent vertices in $H$. Let $H_i$ be the copy of $H$ corresponding to $\vB_i$ for $1\leq i \leq n$. 

First, assume that $n$ is even. Note that 
$
\big((\vB_1,x),(\vB_1,y),(\vB_4,x),(\vB_6,x),\dots, (\vB_n,x)\big)
$ is a cooling sequence for $P_n \circ H$ of length $\frac{n}{2}+1 = \lceil \frac{n+1}{2}\rceil$. In round $\frac{n}{2}+1$, all of $H_{n-1}$ cools by propagation and $(\vB_n,x)$ is manually cooled. The rest of $H_n$ cools by propagation in the following round. Thus, $\CL(P_n \circ H) \geq \frac{n}{2}+2 = \left\lceil \frac{n+1}{2}\right\rceil +1$. 

Similarly, if $n$ is odd, then $\big((\vB_1,x),(\vB_1,y),(\vB_4,x),(\vB_6,x),\dots, (\vB_{n-1},x)\big)$ is a cooling sequence for $P_n \circ H$ of length $\frac{n-1}{2}+1 = \frac{n+1}{2}$. In round $\frac{n+1}{2}$, all of $H_{n-2}$ cools by propagation and $(\vB_{n-1},x)$ is manually cooled. In the following round, the rest of $H_{n-1}$ cools by propagation due to $H_{n-2}$, and $H_n$ is cooled by propagation due to $(\vB_{n-1},x)$. Thus, $\CL(P_n \circ H) \geq \frac{n+1}{2}+1 = \left\lceil \frac{n+1}{2}\right\rceil +1$.
\end{proof}

\begin{theorem} \label{thm:LEX:Cn}
If $n\equiv 0$ or $1 \pmod3$ with $n\geq 6$, and $H$ is not a clique, then $$\CL(C_n \circ H) \geq \CL(C_n)+1 = \left\lceil \frac{n+2}{3} \right\rceil +1.$$
\end{theorem}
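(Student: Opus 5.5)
The plan is to imitate the proof of Theorem~\ref{thm:LEX:Pn}. I will exhibit an explicit sequence of sources in $C_n\circ H$ of length $\CL(C_n)=\lceil\frac{n+2}{3}\rceil$, taken as known from \cite{cooling}. I will then show that after the last manual step some vertex of $C_n\circ H$ is still uncooled, so cooling lasts one extra round.

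Let $V(C_n)=\{\vB_1,\dots,\vB_n\}$ in cyclic order, let $x,y$ be nonadjacent vertices of $H$, and let $H_i$ be the copy of $H$ over $\vB_i$. Validity will be checked with Lemma~\ref{lem:GENE:cooling_sequence_equation}, using two facts. For $i\neq j$ and any $a,b\in V(H)$ we have $d_{C_n\circ H}((\vB_i,a),(\vB_j,b))=d_{C_n}(\vB_i,\vB_j)$. Also $d_{C_n\circ H}((\vB_i,x),(\vB_i,y))=2$, because $n\geq 6$ gives $\vB_i$ a neighbour in $C_n$, which yields a common neighbour of $(\vB_i,x)$ and $(\vB_i,y)$.

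The construction is as follows.
\begin{enumerate}
\item The first two sources are $(\vB_1,x)$ and $(\vB_1,y)$. This is the ``repeat'' trick that buys one round relative to $C_n$: after round $2$ the cooled set in the $C_n$-coordinate is the same as if only $\vB_1$ had been chosen in round $1$, but two sources have been used.
\item The remaining sources go one per round in copies $H_{j_r}$, where the $j_r$ are chosen as in an optimal cooling sequence of $C_n$ shifted by one round. Concretely, the source of round $r$ is at cycle distance at least $r$ from $\vB_1$ and at cycle distance at least $r-s+1$ from each earlier source of round $s$.
\item I will write the sequence out separately for $n=3k$ and $n=3k+1$. Roughly, it advances along one direction of the cycle in steps chosen so that each new source sits just outside the two frontiers growing from earlier sources, and the frontier growing the other way from $\vB_1$ closes the cycle at the end.
\end{enumerate}
The restriction $n\equiv 0,1\pmod 3$ should be exactly what makes the counting close up. In these residues the optimal cycle sequence ends with a manual choice in a copy that is not yet otherwise cooled. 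In the residue $2$ case there is a leftover vertex that already gives $\CL(C_n)$ its extra round, so the repeat trick gains nothing there.

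For termination I will proceed as in Theorem~\ref{thm:LEX:Pn}. In the round $r^*=\lceil\frac{n+2}{3}\rceil$ of the last manual step, the final source $(\vB_j,x)$ is placed in a copy $H_j$ none of whose vertices has been cooled by propagation. This holds because $\vB_j$ is at cycle distance at least $r^*-s+1$ from every earlier source of round $s$. The vertex $(\vB_j,y)$ is not adjacent to $(\vB_j,x)$, so it is still uncooled at the end of round $r^*$ and cools only in round $r^*+1$. Hence $\CL(C_n\circ H)\geq \CL(C_n)+1$.

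The main obstacle is the explicit bookkeeping of the source positions in the two residue classes. The sequence must satisfy the cooling condition all the way around the cycle, including distances measured the ``short way'' once sources wrap past $\vB_{n/2}$. It must also still leave the final copy untouched, which is where $n\geq 6$ and the residue condition have to be used carefully. I would first work out the cases $n=6,7,9,10$ by hand to fix the pattern and then write the general indices.
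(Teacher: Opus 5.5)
Your plan is the paper's proof. The paper uses exactly the greedy sequence you describe, $(\vB_1,x),(\vB_1,y),(\vB_4,x),(\vB_6,x),\dots,(\vB_{2k},x)$ with $k=\lfloor n/3\rfloor$; the only nontrivial check is $\min\{2r-3,\,n-2r+3\}\ge r$ for the round-$r$ source $\vB_{2r-2}$ (true exactly for $3\le r\le k+1$), after which the rest of $H_{2k}$ is uncooled at the end of round $k+1$, as in your termination argument.

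Two small cautions. First, ``an optimal $C_n$ sequence shifted by one round'' must shift positions as well as rounds: the round-$3$ source is $\vB_4$, not $\vB_3$. Second, your heuristic for the residue condition is wrong for $n\equiv 0\pmod 3$. There $C_n$ itself has no cooling sequence of length $\CL(C_n)$ (for example $C_6$), so the final round is propagation-only, just as in the residue-$2$ case. This is exactly where the repeat at $\vB_1$ and the non-clique hypothesis are indispensable. For $n\equiv 1\pmod 3$, the plain lift of $\vB_1,\vB_3,\dots,\vB_{2k+1}$ already suffices.
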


\begin{proof}
Let $V(C_n)=\{\vB_1,\vB_2,\dots,\vB_n\}$, with $\vB_i\vB_{i+1} \in E(C_n)$ for $1\leq i<n$ and $\vB_1\vB_n\in E(C_n)$. Let $x,y$ be nonadjacent vertices in $H$, and let $H_i$ be the copy of $H$ corresponding to $\vB_i$ for $1\leq i\leq n$. 

First, let $n\equiv 0 \pmod{3}$ and $k=\frac{n}{3}$. Observe that
$
\big((\vB_1,x),(\vB_1,y),(\vB_4,x),(\vB_6,x),$ $\dots,(\vB_{2k},x) \big)
$
is a cooling sequence for $C_n\circ H$ of length $k+1$. In round $k+1$, all of $H_{2k-1}$ and $H_{2k+1}$ cools by propagation, in addition to $H_{2k-2}$. We then manually cool $(\vB_{2k},x)$ in this round and the rest of $H_{2k}$ cools by propagation in round $2k+2 = \left\lceil \frac{n+2}{3}\right\rceil +1$. 

Similarly, if $n\equiv 1 \pmod{3}$, let $k=\frac{n-1}{3}$. We then have that 
$
\big((\vB_1,x),(\vB_1,y),(\vB_4,x),$ $(\vB_6,x),$ $\dots,(\vB_{2k},x) \big)
$
is a cooling sequence for $C_n \circ H$ of length $k+1 = \frac{n+2}{3}$. In round $k+1$, all of $H_{\frac{2n}{3}-1}$ and $H_{\frac{2n}{3}+2}$ cool by propagation, in addition to $H_{\frac{2n}{3}-2}$. We then manually cool $(\vB_{2k},x)$ in this round, after which $H_{2k+1}$ and $H_{2k}$ cool by propagation in round $k+1 = \frac{n+2}{3}+1$. 

Thus, in both of these cases, $\CL(C_n \circ H) \geq \left\lceil \frac{n+2}{3}\right\rceil +1$. 
\end{proof}

If $n\equiv 2 \pmod{3}$, then the above strategy cools $C_n \circ H$ in $\CL(C_n) = \left\lceil \frac{n+2}{3} \right\rceil$ rounds. For both Theorem \ref{thm:LEX:Pn} and Theorem \ref{thm:LEX:Cn}, the lower bound is tight for small values of $n$ and $H = \overline{K}_2$. However, alternate methods would be needed to show equality in general. Exact values for lexicographic products of small paths and cycles are given in Table \ref{tab:LEX:lex_products_table}.

\begin{table}[ht]
\centering
\begin{minipage}{0.42\textwidth}
\centering
\begin{tabular}{|c|c|c|c|}
\hline
$\circ$ & $P_2$ & $P_3$ & $P_4$ \\
\hline
$P_2$ & 2 & 2 & 3 \\
\hline
$P_3$ & 3 & 3 & 3 \\
\hline
$P_4$ & 3 & 4 & 4 \\
\hline
$P_5$ & 4 & 4 & 5 \\
\hline
$P_6$ & 4 & 5 & 5 \\
\hline
$P_7$ & 5 & 5 & 6 \\
\hline
$P_8$ & 5 & 6 & 6 \\
\hline
\end{tabular}
\end{minipage}
\hspace{0.03\textwidth}
\begin{minipage}{0.42\textwidth}
\centering
\begin{tabular}{|c|c|c|c|}
\hline
$\circ$ & $C_3$ & $C_4$ & $C_5$ \\
\hline
$C_3$ & 2 & 2 & 3 \\
\hline
$C_4$ & 3 & 3 & 3 \\
\hline
$C_5$ & 3 & 3 & 3 \\
\hline
$C_6$ & 3 & 4 & 4 \\
\hline
$C_7$ & 4 & 4 & 4 \\
\hline
$C_8$ & 4 & 4 & 5 \\
\hline
\end{tabular}
\end{minipage}
\caption{The entry in row $i$, column $j$ contains the cooling number of the lexicographic product of the graph indicated in the first column of row $i$ and the first row of column $j$. }
\label{tab:LEX:lex_products_table}
\end{table}

Note that $\CL(P_n\circ H)$ and $\CL(C_n\circ H)$ may be very close to $\CL(P_n)$ and $\CL(C_n)$, respectively. However, there are graphs $G$ where $\CL(G\circ H)$ is much larger than $\CL(G)$. 

\begin{theorem}
\label{thm:LEX:centipede_lex}
For any $n\geq 2$, let $G_{2n+1}$ be the centipede with spine vertices given by $P_{2n+1}=(\vB_1,$ $\vB_2,$ $\dots,\vB_{2n+1})$ and a leaf $w_i$ on each $\vB_i$ for $i=4,6,\dots,2n$. We then have that $\CL(G_{2n+1})=\left\lceil \frac{3n+1}{2}\right\rceil$. Moreover, if $H$ is not a clique, then $\CL(G_{2n+1} \circ H) = 2n+1$.
\end{theorem}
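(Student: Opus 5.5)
The first claim is a direct count. $G_{2n+1}$ is a centipede whose spine has $2n+1$ vertices. The legs sit on $\vB_4,\vB_6,\dots,\vB_{2n}$, so $p=n-1$ spine vertices carry a leaf. Lemma~\ref{lem:CART:centipede_cooling} then gives
\[
\CL(G_{2n+1})=\left\lfloor\frac{(2n+1)+(n-1)+2}{2}\right\rfloor=\left\lfloor\frac{3n+2}{2}\right\rfloor=\left\lceil\frac{3n+1}{2}\right\rceil.
\]

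For the lexicographic product, the upper bound comes from the diameter. No pair of vertices of $G_{2n+1}$ is farther apart than $\vB_1$ and $\vB_{2n+1}$, so $\diam(G_{2n+1})=2n\geq 2$. Hence $\diam(G_{2n+1}\circ H)=2n$, since distances between different copies of $H$ are inherited from $G_{2n+1}$. Theorem~\ref{thm:GENE:diam_bounds} then gives $\CL(G_{2n+1}\circ H)\leq 2n+1$. It therefore suffices to exhibit a valid sequence of $2n$ sources that leaves some vertex uncooled at the end of round $2n$.

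I will use the following distance facts in $G\circ H$:
\begin{itemize}
\item $d((a,p),(b,q))=d_G(a,b)$ when $a\neq b$;
\item $d((a,x),(a,y))=2$ when $x,y$ are nonadjacent in $H$, via a neighbouring copy of $H$.
\end{itemize}
Fix nonadjacent $x,y\in V(H)$. The idea is to use each leg twice: each copy of $H$ on a leg gets two consecutive sources, which is exactly what makes the product longer to cool than $G_{2n+1}$. Define
\[
\vC_1=(\vB_1,x),\quad \vC_2=(\vB_1,y),\quad \vC_{2k-1}=(w_{2k},x),\quad \vC_{2k}=(w_{2k},y)\quad (2\le k\le n).
\]
This gives $2n$ sources.

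I then verify the cooling condition of Lemma~\ref{lem:GENE:cooling_sequence_equation} pairwise.
\begin{itemize}
\item Sources in the same copy of $H$ have consecutive indices and are at distance $2$.
\item $d_G(\vB_1,w_{2k})=2k$, while the index gap to either source in the copy over $w_{2k}$ is at most $2k-1$.
\item $d_G(w_{2k},w_{2l})=2|k-l|+2$, while the index gap is at most $2|k-l|+1$.
\end{itemize}
So all required inequalities hold.

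For the final round, take any $z\in V(H)$ and the vertex $(\vB_{2n+1},z)$. We have $d_G(\vB_1,\vB_{2n+1})=2n$, which is at least $2n+1-i$ for $i\in\{1,2\}$. Also $d_G(w_{2k},\vB_{2n+1})=2n-2k+2$, which is at least $2n+1-i$ for $i\in\{2k-1,2k\}$. So $(\vB_{2n+1},z)$ is uncooled at the end of round $2n$, and cooling lasts $2n+1$ rounds.

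The only delicate step is choosing this sequence so that it is simultaneously tight against the target $\vB_{2n+1}$ and against the earlier sources. The boundary cases, $n=2$ and the pair $(\vC_2,\vC_3)$, need a quick check, but the index bounds above already cover them.
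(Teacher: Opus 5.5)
Your proof is correct. For $G_{2n+1}\circ H$ it follows the paper exactly. The upper bound $2n+1$ comes from $\diam(G_{2n+1}\circ H)=2n$ and Theorem~\ref{thm:GENE:diam_bounds}. The lower bound comes from the same sequence $(v_1,x),(v_1,y),(w_4,x),(w_4,y),\dots,(w_{2n},x),(w_{2n},y)$, which leaves $(v_{2n+1},z)$ uncooled after round $2n$. Your pairwise distance checks are more explicit than the paper's but have the same content.

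The difference is in the first claim. You read $\CL(G_{2n+1})$ off Lemma~\ref{lem:CART:centipede_cooling}, using $2n+1$ spine vertices and $p=n-1$ legs. Counting $p$ as legs (rather than spine vertices adjacent to any leaf, such as $v_2$) is the correct reading, and it matches how the paper applies that lemma to alternating centipedes.

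The paper instead proves the value from scratch. It gives explicit sequences for each parity of $n$, for example $(v_1,v_3,w_4,w_6,v_7,w_8,w_{10},\dots,v_{2n-1},w_{2n})$ when $n$ is even. It then certifies optimality by counting. Since $|V(G_{2n+1})|=3n$, every round strictly between the first and the last cools at least one vertex by propagation and one manually. So a process lasting $R$ rounds has at least $2R-3$ cooled vertices and at least one uncooled vertex after round $R-1$, giving $R\le\lfloor (3n+2)/2\rfloor$. The exhibited sequences meet this bound because they propagate exactly one vertex per round.

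Your route is a one-line computation, but it depends on the externally cited thesis result. The paper's route is self-contained and exhibits an extremal sequence, which makes the comparison with the product's value $2n+1$ concrete.
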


\begin{proof}
First, assume $n$ is even and consider the following cooling sequence for $G_{2n+1}$:
$$
S=\big( \vB_1,\vB_3,w_4,w_6,\vB_7,w_8,w_{10},\dots,\vB_{2n-1},w_{2n}\big).
$$
When $\vB_{4i-1}$ is manually cooled in round $3i-1$, then $\vB_{4i}$ cools by propagation in the next round, and we manually cool $w_{4i}$ in round $3i$. This continues until $\vB_{2n-1}$ is manually cooled in round $\frac{3n}{2}-1$, after which we manually cool $w_{2n}$ in round $\frac{3n}{2}$. Finally, $\vB_{2n+1}$ cools by propagation in round $\frac{3n}{2}+1$. We note that $S$ is optimal because exactly one vertex cools in each propagation stage, the smallest possible. Thus, when $n$ is even, $$\CL(G_{2n+1}) = \frac{3n}{2}+1 = \left\lceil \frac{3n+1}{2}\right\rceil .$$ 

Now, assume $n$ is odd. We construct the following cooling sequence for $G_{2n+1}$: 
$$
\big( \vB_1,\vB_3,w_4,w_6,\vB_7,w_8,w_{10},\dots,\vB_{2n-3},w_{2n-2}, w_{2n},\vB_{2n+1}\big).
$$

Cooling proceeds as before, with $\vB_{2n+1}$ being manually cooled in round $\frac{3(n+1)}{2}-1$. Exactly one vertex is cooled in each propagation step, so when $n$ is odd, $$\CL(G_{2n+1}) = \frac{3(n+1)}{2}-1=\left\lceil\frac{3n+1}{2}\right\rceil .$$ 
        
Finally, let $x$ and $y$ be two nonadjacent vertices in $H$, and consider the following cooling sequence for $G_{2n+1}\circ H$ (which does not depend on the parity of $n$):
$$
\big((\vB_1,x),(\vB_1,y),(w_4,x),(w_4,y),(w_6,x),(w_6,y),\dots,(w_{2n},x),(w_{2n},y) \big).
$$
This sequence has length $2n$, and all $(\vB_{2n+1},x)$ are uncooled until round $2n+1$. Thus, $\CL(G_{2n+1} \circ H) \geq 2n+1$. Equality follows from Theorem~\ref{thm:GENE:diam_bounds}. 
\end{proof}

If $H$ is not a clique, then Theorem~\ref{thm:LEX:centipede_lex} provides a graph $G$ for which $\CL(G\circ H)$ is close to $\frac{4}{3}\CL(G).$

\section{Cooling direct products and disconnected graphs}

Whereas previous sections focused on bounding the cooling numbers of products of graphs in terms of the cooling numbers and diameters of their components, our first result in this section shows that such bounds are not always possible for the direct product. We do this by constructing a family of connected graphs $G\times H$ with unbounded diameter, where the diameters of $G$ and $H$ are fixed. Recall that $G\times H$ either has one or two components, and vertices $(x,y)$ and $(z,w)$ are connected in $G\times H$ if and only if there exists an $x$-$z$ walk in $G$ and a $y$-$w$ walk in $H$ of the same parity.

\begin{theorem}
\label{thm:DIR:direct_diam}
There is no function $f: \mathbb{N}\times\mathbb{N}\rightarrow\mathbb{R}$ such that, for any two graphs $G$ and $H$, the diameter of a component of $G\times H$ is bounded above by $f(\diam(G),\diam(H)).$
\end{theorem}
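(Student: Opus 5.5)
We will exhibit, for each $n$, graphs $G$ and $H$ of bounded diameter (say $\diam(G)=\diam(H)=1$ or $2$) whose direct product has a component of diameter at least roughly $n$. Since any such $f$ would give a uniform bound $f(a,b)$ on all products with $\diam(G)=a$ and $\diam(H)=b$, this is enough. The natural choice is $G=K_2$ and $H=K_n$, but $K_2\times K_n$ has diameter at most $3$, so parity alone is not the obstacle. We instead need odd cycles, whose ``odd closed walk'' length grows while their diameter stays small. So we take $H=C_{2n+1}$, which has diameter $n$; this is unbounded, and therefore too much. We must make the length of the shortest odd cycle large while keeping the diameter of the factor bounded.

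The key observation is the following. In $K_2\times H$ (the bipartite double cover of $H$), the vertices $(0,y)$ and $(1,y)$ lie in the same component exactly when $H$ contains an odd closed walk through $y$. Moreover, $d((0,y),(1,y))$ equals the length of the shortest odd closed walk through $y$. So we want $H$ to be nonbipartite with bounded diameter but with every odd cycle long. A clean choice is to take $C_{2n+1}$ and add a new apex vertex $z$ adjacent to every vertex of the cycle, but with each cycle edge subdivided. Concretely, let $H_n$ be the cycle $C_{2(2n+1)}$ with vertices $c_0,\dots,c_{4n+1}$, together with an apex $z$ joined to the even-indexed vertices $c_0,c_2,\dots,c_{4n}$. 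Then:
\begin{itemize}
\item $H_n$ has diameter at most $4$, since every vertex is within distance $1$ of an even-indexed vertex, and each of those is adjacent to $z$.
\item $H_n$ is nonbipartite, because the path $c_{4n}c_{4n+1}c_0$ together with the edges through $z$ yields an odd cycle.
\end{itemize}
This particular gadget may give short odd cycles, such as $z,c_0,c_1,c_2,z$, which has length $4$ and is even, and $z c_{4n}c_{4n+1}c_0 z$, which also has length $4$. Parity must therefore be checked carefully. The robust version of the gadget is as follows. Take a $2$-colouring-violating ``twist'' placed far away, and hang bipartite hubs so the diameter stays bounded while any odd closed walk must traverse the twist. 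For example, let $H_n$ be a bipartite graph of diameter $2$ or $3$ (a complete bipartite spine) plus one long odd path attached between two same-side vertices. Then every odd closed walk uses that path, and the path's length $2n+1$ forces long odd walks. The path's internal vertices, however, must also stay close, so we additionally join each of them to a hub on the appropriate colour side. The point is that these hub edges preserve a bipartition in which only the path's final edge is ``wrong''.

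The main step, and the expected obstacle, is designing $H_n$ so that three properties hold simultaneously: the diameter is bounded, $H_n$ is nonbipartite, and the shortest odd closed walk through some vertex $y$ has length at least about $n$. After that, we take $G=K_2$ and invoke the walk-parity characterization of distance in direct products. This characterization says that $d_{G\times H}((x,y),(z,w))$ is the least $\ell$ such that both factors have walks of length exactly $\ell$ between the respective endpoints. It gives $d((0,y),(1,y))\ge n$ inside a single component. Hence that component has diameter at least $n$, while $\diam(K_2)=1$ and $\diam(H_n)$ is bounded, contradicting the existence of $f$. If constructing $H_n$ with a single $K_2$ factor proves awkward, I would instead try both factors equal to suitably subdivided odd-cycle-plus-hub graphs. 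In that case I would use that walks of equal length in both factors require matching parity, so one factor's long odd girth again forces a large distance.
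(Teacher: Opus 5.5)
\textbf{The step you flagged as the main obstacle cannot be carried out.} Your framework is right: in $K_2\times H$ the distance from $(0,y)$ to $(1,y)$ is the length of a shortest odd closed walk through $y$, and distances in direct products are governed by common walk lengths. But no graph of bounded diameter has a long shortest odd closed walk.

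The reason is that a shortest odd cycle $C$ of any graph $H$ is isometric. Suppose $u,v\in V(C)$ had $d_H(u,v)<d_C(u,v)$, and let $P$ be a shortest $u$-$v$ path. Combining $P$ with the two $u$-$v$ arcs $A_1,A_2$ of $C$ gives closed walks of lengths $|P|+|A_1|$ and $|P|+|A_2|$. These have opposite parities, and both are shorter than $|C|$, so $H$ would contain a shorter odd cycle. Hence $|C|\le 2\diam(H)+1$. Walking from $y$ to $C$, around $C$, and back then gives an odd closed walk through $y$ of length at most $4\diam(H)+1$.

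In your ``robust'' gadget this appears concretely. Every odd cycle uses the wrong edge $pq$, so a shortest odd cycle is $pq$ plus a shortest $p$-$q$ path avoiding $pq$. Bounded diameter therefore forces such a path of length at most $2\diam(H_n)$, so the hubs necessarily short-circuit the long odd path. Your fallback with two odd-cycle-plus-hub factors fails for the same reason.

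\textbf{The statement itself is false.} Let $G,H$ be connected, each with at least one edge. Walk lengths between two fixed vertices are closed under adding $2$. In a bipartite factor only the parity of $d_G(a,c)$ occurs, with least length at most $\diam(G)$. In a nonbipartite factor, the argument above gives walks of both parities of length at most $4\diam(G)+1$. So if $(a,b)$ and $(c,d)$ lie in a common component of $G\times H$, they have a common walk length, hence a distance, at most $4\max\{\diam(G),\diam(H)\}+1$. Thus $f(s,t)=4\max\{s,t\}+1$ is a valid bound (edgeless products have singleton components).

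\textbf{The paper's proof fails at the same point.} It checks only odd $x$-$y$ \emph{paths} in $H'$, but distances in the product depend on \emph{walks}. Label the cycle $x=h_0,h_1,y=h_2,h_3,\dots,h_{k-1}$. The added edge $yh_4$ closes the triangle $yh_3h_4$, so $x\,h_1\,y\,h_3\,h_4\,y$ is an odd $x$-$y$ walk of length $5$. Hence $d_{G\times H'}((u,x),(v,y))\le\max\{n-1,5\}$, not $k-2$. The corollaries built on this theorem in that section inherit the problem.
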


\begin{proof}
Let $G=P_n$ and $H=C_k$, where $n$ is even, $k$ is odd, and $k$ is sufficiently larger than $n$. Let $u$ and $v$ be the leaves of $G$, and let $x,y \in V(H)$ with $d_H(x,y) \geq 2.$ Any path from $(u,x)$ to $(v,y)$ follows an odd-length $u$-$v$ walk in $G$ in the first coordinate and an odd-length $x$-$y$ walk in $H$ in the second coordinate (both coordinates must change in each step since neither graph contains a loop). There is exactly one odd-length path connecting $x$ and $y$ in $H$, of length $k-2$, so $d_{G \times H}((u,x),(v,y)) =k-2.$

Now, construct $H^\prime$ from $H$ as follows. Order $V(H)$ so that $x$ is the ``leftmost'' vertex, followed by its shared neighbor with $y$, followed by $y$, and so on. Add an edge from $y$ to each vertex to the ``right'' of $y$ that is an even distance away (with respect to the ordering). These additions do not introduce any new odd-length $x$-$y$ paths (indeed, any $x$-$y$ path that uses a newly added edge has even length). Hence, $d_{G \times H}((u,x),(v,y)) =k-2$ and so $\diam(G\times H^\prime)\geq k-2$. 

Fix any function $f: \mathbb{N}\times\mathbb{N}\rightarrow\mathbb{R}$, and note that $\diam(H^\prime)=4$, so we have $f(\diam(G),\diam(H^\prime))=f(n-1,4)$. We choose $k\in\mathbb{N}$ large enough so that we have $k-2>f(n-1,4)$, implying $\diam(G\times H^\prime)>f(\diam(G),\diam(H^\prime))$, which yields the result.
\end{proof}

We now show that $\CL(G\times H)$ cannot be bounded above by any function of $\CL(G)$ and $\CL(H)$ (or of $\diam(G)$ and $\diam(H)$) for arbitrary graphs $G$ and $H$.

\begin{corollary}
\label{direct_infinite_differences}
For all $f,g: \mathbb{N}\times\mathbb{N}\rightarrow\mathbb{R}$ and for all graphs $G$ and $H$, the differences $\CL(G\times H)-f(\CL(G),\CL(H))$ and $\CL(G\times H)-g(\diam(G),\diam(H))$
can be arbitrarily large. 
\end{corollary}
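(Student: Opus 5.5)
The plan is to reuse the family from the proof of Theorem~\ref{thm:DIR:direct_diam}: take $G=P_n$ with $n$ even and fixed, and $H=H'$ built from $C_k$ with $k$ odd. Then $\diam(G)=n-1$ and $\diam(H')=4$ are fixed. The cooling numbers are fixed too, since by Theorem~\ref{thm:GENE:diam_bounds} we have $\CL(P_n)\le n$ and $\CL(H')\le 5$. The real content is to show that $\CL(G\times H')$ grows with $k$. The lower bound in Theorem~\ref{thm:GENE:diam_bounds} does this once we know $G\times H'$ is connected. Indeed, $H'$ contains the odd cycle $C_k$, so it is non-bipartite, and the direct product of a connected graph with a connected non-bipartite graph is connected. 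Hence
\[
\CL(G\times H')\ \ge\ \left\lceil \frac{\diam(G\times H')+2}{2}\right\rceil\ \ge\ \left\lceil\frac{k}{2}\right\rceil,
\]
using $\diam(G\times H')\ge d_{G\times H'}((u,x),(v,y))=k-2$ from the proof of Theorem~\ref{thm:DIR:direct_diam}.

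Next I would bound the quantities the functions see. The arguments $\CL(G)$ and $\CL(H')$ take finitely many values as $k$ varies: $\CL(P_n)$ is fixed, and $\CL(H')\in\{3,4,5\}$ because $\diam(H')=4$. So for fixed $f$, the values $f(\CL(G),\CL(H'))$ are bounded by $M=\max_{a\le n,\,b\le 5}|f(a,b)|$. Also $g(\diam(G),\diam(H'))=g(n-1,4)$ is a single number. Letting $k\to\infty$ through odd values then makes both differences exceed any prescribed bound. The statement's quantifier ``for all graphs $G$ and $H$'' should be read as ``over graphs $G$ and $H$''; I would phrase the conclusion as: for every $f,g$ and every $N$, there exist $G,H$ with both differences larger than $N$.

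The main obstacle is checking that the diameter claim from Theorem~\ref{thm:DIR:direct_diam} really holds. We need $\diam(H')=4$ uniformly in $k$, and $\diam(G\times H')$ must be computed on a connected product, not just within one component. I would verify connectivity directly: from $(u,x)$ one can reach any $(a,b)$ by walking in $P_n$ back and forth while traversing the odd cycle, so both parities of walk are available in $H'$.

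For the distance, I would re-argue that every odd $x$--$y$ walk in $H'$ has length at least $k-2$. This is where the added edges from $y$ could matter. Because the added edges join $y$ only to vertices at even position, each one preserves a parity labelling of the path $C_k\setminus\{xz\}$ ($z$ being the other neighbour of $x$) except across the single edge closing the cycle. So an odd walk must use the long arc, which gives the bound $k-2$. If that parity argument turns out to be delicate, a safe fallback is to use $H=C_k$ itself. Then $\diam(H)$ is unbounded, so this only handles $f$ through $\CL$, not $g$. To handle $g$ as well, I would instead cap the diameter by adding a universal-like vertex of even parity adjacent to every second cycle vertex, which again preserves bipartite-like distances between the odd-parity walk ends.
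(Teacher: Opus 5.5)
\textbf{The proposal has a genuine gap: its key distance claim is false, and no choice of $H'$ can fix it.}

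Your plan follows the paper's proof. You reuse $G\times H'$ from Theorem~\ref{thm:DIR:direct_diam}, lower-bound $\CL$ by the diameter, and note that $f$ and $g$ see only finitely many arguments. You rightly single out $d_{G\times H'}((u,x),(v,y))=k-2$ as the crux, but that step fails, and your parity argument for it is wrong. Write the cycle as $x=c_0,c_1,y=c_2,c_3,\dots,c_{k-1}$. The added edges $yc_4,yc_6,\dots,yc_{k-1}$ join two vertices of the \emph{same} parity, so they break your labelling rather than preserve it. For example, $c_2c_3c_4$ is a triangle, and $x,c_1,y,c_3,c_4,y$ is an odd $x$--$y$ walk of length $5$. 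The $H'$-coordinate of a path in $G\times H'$ is only a walk. So the paper's remark that no new odd $x$--$y$ \emph{paths} appear (true for paths) does not help. In fact $d_{G\times H'}((u,x),(v,y))\le\max\{n-1,5\}$, independent of $k$, so this family never makes $\CL(G\times H')$ grow.

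A better $H'$ cannot repair this; your universal-vertex variant fails for the same reason. In a connected non-bipartite graph of diameter $D$, a shortest odd cycle is isometric, so it has length at most $2D+1$. Detouring through it gives walks of both parities, of length at most $4D+1$, between any two vertices. A bipartite factor only ever needs walks of length at most its diameter. Walks can be lengthened by $2$, so every component of $G\times H$ (connected factors, each with an edge) has diameter at most $4\max\{\diam(G),\diam(H)\}+1$. There are at most two components, so Theorems~\ref{thm:GENE:diam_bounds} and~\ref{thm:DIR:disconnected_equality} give $\CL(G\times H)\le 8\max\{\diam(G),\diam(H)\}+4$. By Corollary~\ref{cor:GENE:another_diam_bound}, this is also bounded by a function of $\CL(G)$ and $\CL(H)$. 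Hence, for such factors, the statement is false, and so is Theorem~\ref{thm:DIR:direct_diam}, on which the paper's proof rests.

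Your $C_k$ fallback does not even handle $f$, because $\CL(C_k)$ also grows with $k$. The literal statement survives only through a degenerate factor. Take $K_1\times K_m=\overline{K}_m$: its cooling number is $m$, while $\CL(K_1)=1$, $\CL(K_m)=2$, $\diam(K_1)=0$ and $\diam(K_m)=1$ are fixed. There the growth comes from many components, not from long diameters. This example is also excluded by the paper's own remark that $G\times H$ has one or two components.
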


\begin{proof}
Consider the graph $G\times H^\prime$ from the proof of Theorem~\ref{thm:DIR:direct_diam}. As $k$ increases, so does $\diam(G\times H^\prime)$. By Theorem~\ref{thm:GENE:diam_bounds}, we have $\CL(G\times H)\geq\lceil (\diam(G\times H^\prime)+2)/2\rceil$, which grows with $k$. However, $\diam(G)$ and $\diam(H)$ are constant with respect to $k$, so $g(\diam(G),\diam(H))$ is constant. Therefore, $\CL(G\times H)-g(\diam(G),\diam(H))$ can be made arbitrarily large.

Similarly, by Theorem~\ref{thm:GENE:diam_bounds}, $\CL(G)$ and $\CL(H^\prime)$ are bounded above and below by functions of $\diam(G)$ and $\diam(H^\prime)$, respectively. Since these diameters are constant with respect to $k$, if $a\leq \CL(G)\leq b$ and $c\leq \CL(H^\prime)\leq d$, then it follows that $f(\CL(G),\CL(H^\prime)) \in \{f(u,v):a\leq u \leq b,c \leq v \leq d\}.$ That is, $f(\CL(G),\CL(H^\prime))$ is bounded above by a constant. Again, since the lower bound on $\CL(G\times H)$ is increasing with $k$, the difference $\CL(G\times H)-f(\CL(G),\CL(H))$ is unbounded.
\end{proof}

Although burning is not the focus of this paper, the following corollary is immediate.

\begin{corollary}
\label{burn_direct_infinite_differences}
For all $f,g: \mathbb{N}\times\mathbb{N}\rightarrow\mathbb{R}$ and for all graphs $G$ and $H$, the differences $b(G\times H)-f(b(G),b(H))$ and $b(G\times H)-g(\diam(G),\diam(H))$
can be arbitrarily large. 
\end{corollary}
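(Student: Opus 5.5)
The plan is to reuse the construction from the proof of Theorem~\ref{thm:DIR:direct_diam} and transfer its conclusion from cooling to burning. For that we need the burning analogue of the diameter lower bound in Theorem~\ref{thm:GENE:diam_bounds}, together with a universal upper bound on $b(G)$ in terms of $\diam(G)$. Both are standard.

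\textbf{Lower bound.} If burning ends after $k$ rounds, every vertex lies in $N_{k-i}[x_i]$ for some source $x_i$. Taking two vertices at maximum distance in a component, and noting that all balls are centred on that component's vertices, we get
\[
\diam\le 2(k-1)+(k-1)\cdot 2,
\]
which is crude but enough. A cleaner route is to use the known bound $b(G)\ge \mathrm{rad}(G)^{1/2}$-type estimates from \cite{BJR}, which would be cited rather than reproved. The simplest self-contained version only needs \emph{some} lower bound on $b$ that tends to infinity with the diameter. Any burning sequence of length $k$ covers the graph by $k$ balls of radii at most $k-1$. A diameter path of length $D$ meets each ball in a subpath of at most $2k-1$ vertices, so $D+1\le k(2k-1)$, giving $b\ge \sqrt{(D+1)/2}$. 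Since $G\times H'$ is connected here ($H'$ contains an odd cycle and $G$ is connected bipartite), this applies directly.

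\textbf{Upper bound.} We need $b(G)\le \diam(G)+1$: burning a single vertex with no further sources covers everything within $\diam(G)+1$ rounds. So $b(G)$ and $b(H')$ are confined to finite ranges determined by $\diam(G)=n-1$ and $\diam(H')=4$, which stay fixed as $k$ grows.

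\textbf{Conclusion.} The argument then mirrors Corollary~\ref{direct_infinite_differences}. First, $f(b(G),b(H'))$ ranges over a finite set, so it is bounded, and $g(\diam(G),\diam(H'))$ is a constant. Second, $\diam(G\times H')\ge k-2$ forces $b(G\times H')\ge\sqrt{(k-1)/2}\to\infty$. Hence both differences are unbounded.

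The only step needing care is stating the burning diameter lower bound correctly. This is the path-covering count above, and I expect no real obstacle there.
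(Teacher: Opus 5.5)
Your route is the paper's: bound $b$ above and below by functions of the diameter, then rerun Corollary~\ref{direct_infinite_differences}. You prove $b\ge\sqrt{(D+1)/2}$ by covering a diametral geodesic, where the paper cites $b(G)\ge\lceil\sqrt{\diam(G)+1}\,\rceil$ from \cite{BJR}, and you get $b\le\diam+1$ from a single source. Drop your first ``crude'' inequality $\diam\le 4(k-1)$: it is false, since $P_{k^2}$ has burning number $k$.

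The genuine gap is the step you take on trust: $\diam(G\times H')\ge k-2$, imported from Theorem~\ref{thm:DIR:direct_diam}. That proof only rules out short odd $x$--$y$ \emph{paths} in $H'$, but a path in $G\times H'$ projects to \emph{walks} in each factor. Label the cycle in the paper's order as $x=h_0,h_1,y=h_2,h_3,\dots,h_{k-1}$. The added edge $yh_4$ forms a triangle with $h_3$, so $x,h_1,y,h_3,h_4,y$ is an odd $x$--$y$ walk of length $5$. Padding with back-and-forth steps gives $d_{G\times H'}((u,x),(v,y))\le\max(n-1,5)$, independent of $k$. So nothing in your argument actually grows with $k$.

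No other choice of $H'$ can repair this. In a connected non-bipartite graph, a shortest odd cycle is isometric, so it has length at most $2\diam+1$. Detouring around it yields walks of both parities, each of length at most $4\diam+1$, between any two vertices. In a bipartite graph the only parity that occurs is already realized by a shortest path.

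Now $d_{G\times H}((a,x),(b,y))$ is the least common length of an $a$--$b$ walk in $G$ and an $x$--$y$ walk in $H$, and walk lengths can be raised by $2$. Hence every component of $G\times H$ has diameter at most $4\max(\diam(G),\diam(H))+1$. Since $G\times H$ has at most two components, $b(G\times H)\le 4\max(\diam(G),\diam(H))+3$. Combined with $\diam\le b^2-1$, this gives $b(G\times H)\le 4\max(b(G),b(H))^2$.

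Consequently, both differences in the statement are bounded above once $f$ and $g$ are chosen suitably. The statement as intended is false, as are Theorem~\ref{thm:DIR:direct_diam} and Corollary~\ref{direct_infinite_differences}, so no proof along these lines can exist. Your proposal inherits the paper's error rather than introducing a new one.
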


\begin{proof}
Observe that $b(G)$ is bounded above and below by functions of $\diam(G)$, and hence the same argument that was used to prove Corollary \ref{direct_infinite_differences} applies. In particular, $\lceil\sqrt{\diam{(G)}+1}\ \rceil\leq b(G)$ (see \cite{BJR}), and $b(G)\leq \CL(G)\leq \diam(G)+1$ (by Theorem~\ref{thm:GENE:diam_bounds}).
\end{proof}

As $G\times H$ is often disconnected, we turn our attention to expressing the cooling number of a disconnected graph in terms of the cooling numbers of its components.

\begin{theorem}
\label{thm:DIR:disconnected_equality}
Let $G$ be a disconnected graph with components $G_1,G_2,\ldots,G_n$, where $k\in\{0,1,\ldots,n\}$ is the number of components of $G$ that do not have an optimal cooling sequence where a source is cooled in the final round. We then have  $$\CL(G)=\begin{cases}
\sum_{i=1}^n\CL(G_i) & \text{if~} k=0, \\
\sum_{i=1}^n\CL(G_i)-k+1 & \text{otherwise.} 
\end{cases}$$ 
\end{theorem}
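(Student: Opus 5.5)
Throughout, the key observation is that vertices in different components are at infinite distance. So by Lemma~\ref{lem:GENE:cooling_sequence_equation}, a sequence of sources in $G$ is valid exactly when, for each $i$, the sources lying in $G_i$, indexed by the rounds $r_1<r_2<\cdots$ in which they are cooled, satisfy $d_{G_i}(u_a,u_b)\ge r_b-r_a+1$. For each component $G_i$, let $L_i$ be the maximum length of a cooling sequence of $G_i$. I first record that $L_i=\CL(G_i)$ if $G_i$ has an optimal cooling sequence with a source in its final round, and $L_i=\CL(G_i)-1$ otherwise. Indeed, a cooling sequence of length $L$ lasts at least $L$ rounds, so $L\le \CL(G_i)$, and equality means that sequence is optimal with a source in its last round. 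Conversely, every optimal sequence has length at least $\CL(G_i)-1$.

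\textbf{Lower bound.} I will cool the components one after another. I order them so that the $n-k$ components of the first type (those admitting an optimal sequence with a final-round source) come first, then the $k$ others, with one of the $k$ placed last.
\begin{itemize}
\item A first-type component is run with an optimal sequence of length $\CL(G_i)$. It is fully cooled at the end of its last round, and the next component starts in the following round.
\item A component of the other type is run with an optimal sequence of length $\CL(G_i)-1$. Its final round consists only of propagation, so the first source of the next component can be placed in the manual step of that same round.
\end{itemize}
Validity follows from the within-component condition above. Each of the $k$ components overlaps with its successor except the very last one, so the total is $\sum_i \CL(G_i)$ when $k=0$ and $\sum_i\CL(G_i)-(k-1)$ otherwise.

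\textbf{Upper bound.} Take any cooling process on $G$, and suppose it lasts $R$ rounds. Every round except possibly round $R$ contains a source, since before the end some vertex is uncooled after propagation. Fix a component $G_i$, and let $u_1,\dots,u_m$ be its sources in order, cooled in rounds $r_1<\cdots<r_m$. Since $r_b-r_a\ge b-a$, the compressed sequence $(u_1,\dots,u_m)$ satisfies the cooling condition in $G_i$. Extending it greedily to a maximal cooling sequence shows $m\le L_i$. Hence the total number of sources is at most $\sum_i L_i=\sum_i\CL(G_i)-k$, and so $R\le\sum_i\CL(G_i)-k+1$. This settles the case $k\ge1$.

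\textbf{The case $k=0$ (main obstacle).} Here I must rule out $R=\sum_i\CL(G_i)+1$. That would force every component to receive exactly $m=\CL(G_i)$ sources, followed by a final source-free round in which some vertex $v$ still cools. Let $v\in G_i$. The compressed sequence of $G_i$ has length $\CL(G_i)$, so its process finishes by round $m$; in particular $v$ is cooled there, giving $d(v,u_j)\le m-j$ for some $j$.

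In the real (gapped) process, $v$ is uncooled at the end of round $r_m$, which gives $d(v,u_j)\ge r_m-r_j+1\ge m-j+1$ for every $j$, a contradiction. So every component is fully cooled by the round of its last source, no source-free final round occurs, and $R\le\sum_i\CL(G_i)$. The step I expect to need the most care is exactly this comparison between the gapped process and the compressed one, where the inequality $r_m-r_j\ge m-j$ has to be used in the correct direction.
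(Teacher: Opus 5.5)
Your proof is correct. The lower bound and the upper bound for $k\ge 1$ follow the paper's route: cool the components in consecutive blocks, losing one round at each of the $k-1$ transitions out of a component with no final-round source; conversely, count the sources landing in each component and allow one extra source-free round. You also supply something the paper only asserts, namely why the number $\ell_i$ of sources in $G_i$ satisfies $\ell_i\le \CL(G_i)$, with $\ell_i\le\CL(G_i)-1$ for the $k$ exceptional components. This comes from compression plus your quantity $L_i$.

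The genuine difference is the case $k=0$. The paper reduces to the situation where every component receives exactly $\CL(G_i)$ sources and the last round is source-free. It then splits on whether the sources of some component are interleaved with those of other components. The non-interleaved case is immediate. In the interleaved case, the paper asserts that a component receiving $\CL(G_j)$ sources cannot have an idle round between them.

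You avoid this case split entirely. Compressing the sources of the component containing the last-cooled vertex gives a legal process of length $\CL(G_i)$, which must cool all of $G_i$ by round $\CL(G_i)$. Since $r_m-r_j\ge m-j$, idle rounds only enlarge the balls $N_{r_m-r_j}[u_j]$. Hence $G_i$ is already fully cooled when its last source is placed, so no source-free final round can occur.

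Your argument is shorter and never needs to rule out interleaving. That is an advantage, because the paper's justification of the interleaved subcase is thin. It appeals to the fact that a cooling sequence of length $\CL(G_j)$ has a source in its \emph{final} round, which does not address an intermediate idle round. Moreover, the restriction of the process to $G_j$ with idle rounds is not itself a legal cooling process of $G_j$, so its duration is not a priori bounded by $\CL(G_j)$.

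The paper's route, if that subcase were fully established, would give a stronger structural fact: such a component can never receive its $\CL(G_j)$ sources with a gap. Your route proves exactly what the theorem needs.
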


\begin{proof}
For the lower bound, we cool $G$ by optimally cooling each $G_i$ in sequence. If $k=0$, then the problem is straightforward. We begin by cooling $G_{1},G_{2},\ldots,G_{n-k}$ optimally one after the other, taking $\ell=\sum_{r=1}^{n-k}\CL(G_{r})$ rounds. We then cool $G_{n-k+1}$, $G_{n-k+2},$ $\ldots,G_{n}$ optimally one after the other, noting that no source in $G_{n-k+i}$ is available following the final propagation step in each $G_{n-k+i}$. We must therefore cool a source in $G_{n-k+i+1}$ in the same round that propagation occurs in $G_{n-k+i}$ and ``lose a round.'' 

The first source in $G_{n-k+1}$ is cooled in round $\ell+1$, the first source in $G_{n-k+2}$ is cooled in round $\ell+\CL(G_{n-k+1})$, the first source in $G_{n-k+3}$ is cooled in round $\ell+\CL(G_{n-k+1})+\CL(G_{n-k+2})-1$, and so on. Indeed, the first source in $G_{n}$ is cooled in round $\ell+\sum_{s=1}^{k-1}\CL(G_{n-k+s})-(k-2)$ and cooling lasts an additional $\CL(G_{n})-1$ rounds, for a total of $\ell+\sum_{s=1}^{k}\CL(G_{j_s})-k+1$. The result follows from the definition of $\ell$.

For the upper bound, let $\ell_i$ be the number of sources cooled in $G_i$ in an optimal cooling sequence $S$. Suppose $k=0$ and that the upper bound is false, so $\CL(G)>\sum_{i=1}^n\CL(G_i)$. Note that $\ell_i\leq \CL(G_i)$ for each $i$. We consider two cases.

\smallskip
    
\noindent\emph{Case 1:} A source is cooled in $G$ in the last round, so $\CL(G)=\sum_{i=1}^n \ell_i$. We have $$\sum_{i=1}^n\CL(G_i)<\CL(G)=\sum_{i=1}^n \ell_i\leq\sum_{i=1}^n\CL(G_i) ,$$ which is a contradiction.

\smallskip

\noindent\emph{Case 2:} A source is not cooled in $G$ in the last round, so $\CL(G)=\sum_{i=1}^n \ell_i + 1$. Since 
$$\sum_{i=1}^n\CL(G_i)<\sum_{i=1}^n \ell_i+1\leq\sum_{i=1}^n\CL(G_i)+1,$$ we must have 
$\sum_{i=1}^n \ell_i=\sum_{i=1}^n\CL(G_i)$.
Seeing as $\ell_i\leq\CL(G_i)$ for each $i$, we conclude $\ell_i=\CL(G_i)$ for $i\in\{1,2,\ldots,n\}$. We finish the $k=0$ case by considering two sub-cases.

\smallskip

\emph{Case 2.a:} $S$ never ``swaps between'' the components, so we cool $\ell_1$ sources in $G_1$, followed by $\ell_2$ sources in $G_2$, and so on. But since $\ell_i=\CL(G_i)$ for each $i$, this means $G$ cools in $\sum_{i=1}^n \ell_i<\sum_{i=1}^n \ell_i+1=\CL(G)$ rounds, contradicting the optimality of $S$.

\smallskip

\emph{Case 2.b:} $S$ ``swaps between'' the components. At some point during the cooling process, we cool a source in a $G_j$, followed by at least one source in other components of $G$, and then cool a source in $G_j$. Consider the cooling that occurs in $G_j$. There must be at least one propagation-only round where no source is chosen in $G_j$, but we cool $\ell_j=\CL(G_j)$ sources in $G_j$. Indeed, given any cooling sequence of length $\CL(G_j)$ in $G_j$, a source is cooled in the final round, and thus a propagation-only round is impossible. This is a contradiction, and the proof of the upper bound is complete when $k=0$.    
        
Finally, suppose that $k>0$ and the upper bound is false, so we have that $\CL(G)>\sum_{i=1}^n\CL(G_i)-k+1 $. Note that for $G_1,G_2\ldots,G_{n-k}$ we have $\ell_i\leq \CL(G_i)$, and for $G_{n-k+1},$ $G_{n-k+2},$ $\ldots,G_n$ we have $\ell_i\leq \CL(G_i)-1$. Hence, 
$
\sum_{i=1}^n\ell_i\leq\sum_{i=1}^n\CL(G_i)-k
$. 
If a source is cooled in $G$ in the last round, then $\CL(G)=\sum_{i=1}^n\ell_i$. Therefore,
$$
\sum_{i=1}^n\CL(G_i)-k+1<\CL(G)=\sum_{i=1}^n\ell_i\leq\sum_{i=1}^n\CL(G_i)-k ,
$$ 
a contradiction. If no source is cooled in the last round, then $\CL(G)=\sum_{i=1}^n\ell_i+1$ and
$$
\sum_{i=1}^n\CL(G_i)-k+1<\CL(G)=\sum_{i=1}^n\ell_i+1\leq\sum_{i=1}^n\CL(G_i)-k+1 ,
$$ 
which is also a contradiction, so the proof is complete.
\end{proof}

\begin{corollary}
\label{cor:DIR:disconnected_max}
Let $G$ be a disconnected graph with components $G_1,G_2,\ldots,G_n$. If each component $G_i$ is a maximally cool graph, then $\CL(G)=\sum_{i=1}^n\CL(G_i)-n+1$.
\end{corollary}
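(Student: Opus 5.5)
The corollary should follow directly from Theorem~\ref{thm:DIR:disconnected_equality}: the plan is to show that every component here falls among the $k$ components counted in that theorem, so that $k=n\geq 2>0$. The second case of the theorem then gives $\CL(G)=\sum_{i=1}^n\CL(G_i)-n+1$.

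The key step is the claim that a maximally cool graph $G_i$ has no optimal cooling sequence in which a source is cooled in the final round. Suppose, for a contradiction, that $(\vA_1,\dots,\vA_{\mA})$ is an optimal cooling sequence for $G_i$ with a source in the last round. Then $\mA=\CL(G_i)=\diam(G_i)+1$. By Lemma~\ref{lem:GENE:cooling_sequence_equation}, the cooling condition gives
\[
d_{G_i}(\vA_1,\vA_{\mA})\geq \mA=\diam(G_i)+1,
\]
which exceeds the diameter. This is impossible.

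The only degenerate case is when $\mA=1$, which occurs exactly when $\diam(G_i)=0$, so $G_i=K_1$. Then $\CL(K_1)=1$, and the single vertex is itself a source cooled in the final round. I would handle this case by noting it directly: either the intended reading excludes trivial components (for $K_1$ the formula would otherwise fail, since it is counted in $n$ but not in $k$), or we assume each $G_i$ has order at least $2$. For order at least $2$, we have $\mA\geq 2$ and the distance argument above applies.

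With every component counted, $k=n$, and substituting into Theorem~\ref{thm:DIR:disconnected_equality} finishes the proof. The main point needing care is this degenerate single-vertex component. Everything else is a one-line diameter contradiction.
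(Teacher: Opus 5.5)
Your argument is correct and is the same as the paper's: a maximally cool component cannot have a source in round $\CL(G_i)=\diam(G_i)+1$, since by Lemma~\ref{lem:GENE:cooling_sequence_equation} that source would lie at distance at least $\diam(G_i)+1$ from the first source, so $k=n$ in Theorem~\ref{thm:DIR:disconnected_equality}. Your caveat about trivial components is also justified: the paper's one-line proof overlooks it, and indeed $\overline{K}_2$ (two maximally cool $K_1$ components) has cooling number $2$, not the $1$ the formula predicts, so the statement needs each $G_i$ to have order at least $2$.
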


\begin{proof}
If $H$ is maximally cool, then no optimal cooling sequence has a source in round $\diam(H)+1$, since the rest of $G$. Theorem~\ref{thm:DIR:disconnected_equality} yields the result.
\end{proof}

A \emph{path-forest} is a disjoint union of paths. Since an optimal cooling sequence for $P_n$ has a source in the final round if and only if $n$ is odd, and $\CL(P_n)=\left\lceil\frac{n+1}{2}\right\rceil$ \cite{cooling}, we have the following corollary. 

\begin{corollary}\label{corH}
Let $G$ be the path-forest consisting of paths $P_{i_1},P_{i_2},\ldots,P_{i_n}$, where $k\in\{0,1,\ldots,n\}$ of the paths have an even number of vertices. We then have 
\[
\begin{aligned}
\CL(G)=
\begin{cases}
\sum_{j=1}^n \left\lceil \dfrac{i_j+1}{2} \right\rceil & \text{if } k=0,\vspace{0.5em} \\ 
\sum_{j=1}^n \left\lceil \dfrac{i_j+1}{2} \right\rceil - k + 1 & \text{otherwise}.
\end{cases}
\end{aligned}
\]
\end{corollary}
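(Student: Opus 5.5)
This corollary should follow by specializing Theorem~\ref{thm:DIR:disconnected_equality} to the components $G_j=P_{i_j}$. That needs two facts about each path: its cooling number, and whether some optimal cooling sequence manually cools a source in the final round. The first is $\CL(P_n)=\left\lceil\frac{n+1}{2}\right\rceil$ from \cite{cooling}. The second is that such a sequence exists exactly when $n$ is odd. Given both, the parameter $k$ of Theorem~\ref{thm:DIR:disconnected_equality} (the number of components with no optimal sequence having a final-round source) equals the number of paths with an even number of vertices. Substituting into the two cases of that theorem gives the displayed formula directly.

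The only real work is the parity characterization; Lemma~\ref{lem:GENE:cooling_sequence_equation} handles the validity checks. Label $V(P_n)=[n]$.

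For $n=2t+1$ odd, I would use the sequence $1,3,5,\dots,2t+1$. Consecutive sources are at distance $2$, and in general $d(2i-1,2j-1)=2|j-i|\ge |j-i|+1$. The last source $2t+1$ is cooled in round $t+1=\left\lceil\frac{n+1}{2}\right\rceil$, so the sequence is optimal and has a source in the final round. (For $n=1$ the single vertex is a source in round $1$.)

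For $n=2t$ even, I argue by contradiction. Suppose some cooling sequence has $c=\CL(P_n)=t+1$ sources, so a source is cooled in round $c$. By Lemma~\ref{lem:GENE:cooling_sequence_equation}, consecutive sources are at distance at least $2$. Moreover, by the description of cooled sets in that lemma's proof, the set cooled at the end of round $j$ is $\bigcup_{i\le j}N_{j-i}[u_i]$. Counting vertices in this union: after the first source, each round adds at least $2$ new vertices, one by propagation and one by the manual step. On a path this is forced, because the cooled region can only fail to grow by propagation if it is all of $P_n$.

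So after $t+1$ rounds with a source in every round, at least $1+2t=n+1$ vertices would be cooled, which is impossible. I expect this counting step to be the main (though minor) obstacle. It has to be phrased so that it also covers boundary effects near the path's ends: if propagation ever adds nothing, the whole path is already cooled, and no source can be available afterward. Hence $k$ counts exactly the even paths, and the corollary follows from Theorem~\ref{thm:DIR:disconnected_equality}.
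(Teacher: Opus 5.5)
Your proposal is correct and follows the paper's route: specialize Theorem~\ref{thm:DIR:disconnected_equality} using $\CL(P_n)=\lceil\frac{n+1}{2}\rceil$ together with the fact that $P_n$ has an optimal cooling sequence ending with a source exactly when $n$ is odd. The paper only asserts that parity fact, whereas you prove it, and your counting argument is sound. Each round after the first cools at least two new vertices, which forces $2\CL(P_n)-1>n$ when $n$ is even. That argument uses only connectivity, not the path structure.
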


Notably, path-forests are a family of graphs where computing the burning number is NP-complete \cite{Burning_Hard}, but in contrast, computing the cooling number can be done in polynomial time by Corollary~\ref{corH}. We finish the section with three more applications of Theorem~\ref{thm:DIR:disconnected_equality}.

\begin{corollary}
If $G$ is a connected bipartite graph, then $$2\CL(G) - 1 \leq \CL(G \times K_2) \leq 2\CL(G).$$ 
\end{corollary}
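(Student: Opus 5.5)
We reduce to Theorem~\ref{thm:DIR:disconnected_equality} by identifying $G\times K_2$ as a disjoint union of two copies of $G$. Since $G$ is connected and bipartite with parts $A$ and $B$, the direct product $G\times K_2$ (with $V(K_2)=\{0,1\}$) has exactly two components. One component has vertex set $(A\times\{0\})\cup(B\times\{1\})$ and the other has vertex set $(A\times\{1\})\cup(B\times\{0\})$. We first verify that each component is isomorphic to $G$. Define the map sending $a\in A$ to $(a,0)$ and $b\in B$ to $(b,1)$. An edge $ab$ of $G$ maps to $(a,0)(b,1)$, which is an edge of $G\times K_2$ because $ab\in E(G)$ and $01\in E(K_2)$. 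Conversely, every edge of $G\times K_2$ inside this component has this form, since $A$ and $B$ are independent sets. The map is therefore a bijection onto the component that preserves adjacency in both directions, and the same argument with the roles of $0$ and $1$ swapped handles the other component. Hence $G\times K_2\cong G\,\dot\cup\, G$.

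We then apply Theorem~\ref{thm:DIR:disconnected_equality} with $n=2$ and $G_1\cong G_2\cong G$. By symmetry, $k\in\{0,2\}$. If $G$ has an optimal cooling sequence in which a source is cooled in the final round, then $k=0$ and $\CL(G\times K_2)=2\CL(G)$. Otherwise $k=2$, and $\CL(G\times K_2)=2\CL(G)-2+1=2\CL(G)-1$. In either case, $2\CL(G)-1\le \CL(G\times K_2)\le 2\CL(G)$.

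The only step requiring care is the component identification. Its connectivity part uses the parity criterion recalled at the start of this section: $(x,i)$ and $(z,j)$ lie in the same component exactly when $G$ has an $x$--$z$ walk of the same parity as $i-j$. Since $G$ is bipartite, every $x$--$z$ walk has the same parity, so there are exactly two components and each is connected. We also note that the trivial graph $K_1$ gives $G\times K_2=\overline{K}_2$, which fits the formula with $k=0$: each copy of $K_1$ has $\CL(K_1)=1$, and cooling $\overline{K}_2$ takes $2$ rounds, giving $2\CL(G)$.
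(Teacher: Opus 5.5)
Your proof is correct and follows the paper's argument: use the bipartition to identify $G\times K_2$ with two disjoint copies of $G$, then invoke Theorem~\ref{thm:DIR:disconnected_equality}. You also supply details the paper leaves implicit, namely the explicit isomorphism and the observation that $k\in\{0,2\}$, so $\CL(G\times K_2)$ equals exactly $2\CL(G)$ or $2\CL(G)-1$.
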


\begin{proof}
Since $G$ is bipartite, the graph $G\times K_2$ is isomorphic to two disjoint copies of $G$. The result then follows from Theorem~\ref{thm:DIR:disconnected_equality}.   
\end{proof}

\begin{theorem}
If $G$ is a connected bipartite graph, then $$2\CL(G \circ \overline{K}_n)-1 \leq \CL(G \times K_{n,n}) \leq 2\CL(G \circ \overline{K}_n).$$
\end{theorem}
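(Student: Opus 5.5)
The plan mirrors the preceding corollary: identify $G\times K_{n,n}$ as a disjoint union of two copies of $G\circ \overline{K}_n$, then apply Theorem~\ref{thm:DIR:disconnected_equality} with two components.

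\emph{Step 1: the decomposition.} Let $G$ have bipartition $(A,B)$ and $K_{n,n}$ have bipartition $(X,Y)$. A vertex $(u,x)$ of $G\times K_{n,n}$ is adjacent to $(v,y)$ only if $u,v$ lie on opposite sides of $G$ and $x,y$ lie on opposite sides of $K_{n,n}$. Hence the vertex set splits into the two invariant classes
\[
C_1=(A\times X)\cup(B\times Y),\qquad C_2=(A\times Y)\cup(B\times X).
\]
Consider the map $\phi:C_1\to V(G\circ\overline{K}_n)$ sending $(u,x)\mapsto(u,x)$, where the second coordinate is read as a label in $[n]$ by indexing $X$ and $Y$ each by $[n]$. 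Two vertices of $C_1$ are adjacent exactly when their first coordinates are adjacent in $G$; in that case one second coordinate lies in $X$ and the other in $Y$, and every such pair is an edge of $K_{n,n}$. This is precisely the edge rule of $G\circ\overline{K}_n$, in which $(u,i)\sim(v,j)$ if and only if $uv\in E(G)$, since $\overline{K}_n$ contributes no edges. So $\phi$ is an isomorphism, and symmetrically $C_2\cong G\circ\overline{K}_n$.

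Because $G$ is connected, $G\circ\overline{K}_n$ is connected: any two vertices $(u,i),(v,j)$ can be joined by following a $u$--$v$ path in $G$, and two vertices in the same fibre are joined through a neighbouring fibre (assuming $G$ has an edge). Thus $G\times K_{n,n}$ has exactly two components, each isomorphic to $G\circ\overline{K}_n$.

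\emph{Step 2: apply Theorem~\ref{thm:DIR:disconnected_equality}.} The two components are isomorphic, so either both or neither have an optimal cooling sequence with a source in the final round, giving $k\in\{0,2\}$. If $k=0$, the theorem gives $\CL = 2\CL(G\circ\overline{K}_n)$. If $k=2$, it gives $2\CL(G\circ\overline{K}_n)-2+1 = 2\CL(G\circ\overline{K}_n)-1$. In either case both inequalities of the statement hold.

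The only delicate point is the isomorphism in Step 1, in particular choosing the identification of the $X$ and $Y$ labels with $[n]$ so that the edges match. The trivial case $G=K_1$ needs a separate check or should be excluded: there $G\times K_{n,n}$ has no edges, and the identity should be verified directly or handled as an edge case.
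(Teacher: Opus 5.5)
Your proposal is correct and is essentially the paper's proof. The paper likewise splits $G\times K_{n,n}$ into $(A\times U)\cup(B\times V)$ and $(B\times U)\cup(A\times V)$, shows each induces a copy of $G\circ\overline{K}_n$ with no edges between them, and then applies Theorem~\ref{thm:DIR:disconnected_equality}. Your connectivity check and the observation that isomorphic components force $k\in\{0,2\}$ spell out what the paper leaves implicit, and the case $G=K_1$ is harmless since $\CL(\overline{K}_{2n})=2n=2\CL(\overline{K}_n)$.
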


\begin{proof}
We show that $G \times K_{n,n}$ is isomorphic to two disjoint copies of the graph $G \circ \overline{K}_n$. 
Let $K_{n,n} = (U.V)$ and let $G=(A,B)$. Consider the subgraph of $G \times K_{n,n}$ induced by $(A\times U)\cup (B \times V)$. For a fixed $a\in A$, the vertices $\{(a,\vA_1),(a,\vA_2),\dots,(a,\vA_n)\} = (a,U)$ form an independent set of order $n$, or a copy of $\overline{K}_n$. The same is true for a fixed $y\in B$. 

Consider $xy\in E(G)$. Noting that $x\in A$ and $y\in B$, in $G \times K_{n,n}$, each $(x,\vA_i)$ is adjacent to each $(y,\vB_j)$, where $\vA_i \in U$ and $\vB_j \in V$. Hence, every vertex in $(x,U)$ is adjacent to every vertex in $(y,V)$. Moreover, if $ab\not\in E(G)$, then $(a,\vA_i)$ and $(b,\vB_j)$ are nonadjacent for $\vA_i\in U$ and $\vB_j \in V$. Thus, $(A\times U)\cup (B\times V)$ induces a copy of $G \circ \overline{K}_n$. 

Similarly, $(B\times U) \cup (A\times V)$ induces a copy of $G\circ \overline{K}_n$. Note that there are no edges between $A \times U$ and $A \times V$, because $A$ is an independent set. There are likewise no edges between $B \times V$ and $B \times U$, between $A \times U$ and $B \times U$, or between $B \times V$ and $A\times V$. Thus, the two copies of $G \circ \overline{K}_n$ are disjoint, and Theorem~\ref{thm:DIR:disconnected_equality} yields the result.
\end{proof}

\begin{theorem}
For all $n>1$, $\CL(P_n\times P_n)= 2n-1$.
\end{theorem}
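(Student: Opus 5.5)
The plan is to reduce the statement to Theorem~\ref{thm:DIR:disconnected_equality}. Since $P_n$ is bipartite, $P_n\times P_n$ splits into two components: $A$, the vertices $(i,j)$ with $i+j$ even, and $B$, those with $i+j$ odd. Two vertices in the same component are joined by an edge exactly when both coordinates change by $\pm1$. I will first show that for $(i,j),(i',j')$ in the same component,
\[
d((i,j),(i',j'))=\max\{|i-i'|,|j-j'|\}.
\]
The reason is that a walk can "waste" steps in one coordinate by bouncing back and forth, and this is always possible inside $[n]$ when $n\ge 2$. Each component therefore has diameter $n-1$: in $A$ the pair $(1,1),(n,1)$ or $(1,1),(n,n)$ realises it, depending on the parity of $n$, and similarly in $B$. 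By Theorem~\ref{thm:GENE:diam_bounds}, $\CL(A),\CL(B)\le n$.

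The central claim is that both $A$ and $B$ are maximally cool, that is, $\CL(A)=\CL(B)=n$. Given this, Corollary~\ref{cor:DIR:disconnected_max} (a maximally cool component cannot have a source in round $\diam+1$, so $k=2$ in Theorem~\ref{thm:DIR:disconnected_equality}) gives $\CL(P_n\times P_n)=n+n-2+1=2n-1$. The case $n=2$ can be checked directly: $P_2\times P_2=2K_2$, whose cooling number is $3$.

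To prove maximal coolness I will use Lemma~\ref{lem:GENE:cooling_sequence_equation} and exhibit, in each component, a sequence $s_1,\dots,s_{n-1}$ together with a target vertex $t$ such that:
\begin{itemize}
\item $\max\{|\Delta i|,|\Delta j|\}(s_a,s_b)\ge |a-b|+1$ for all $a\neq b$;
\item $d(s_a,t)\ge n-a$ for all $a$.
\end{itemize}
The second condition guarantees that $t$ is still uncooled at the end of round $n-1$, so cooling lasts $n$ rounds. The natural design is to take $t$ to be a corner at distance $n-1$ from $s_1$ (again a corner), and to place $s_a$ on the "level set" at Chebyshev distance exactly $n-a$ from $t$. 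These level sets are L-shaped layers around $t$. Consecutive sources will then alternate between the two arms of the L, so that their separation comes from the coordinate in which they are far apart. This is the same two-arm zigzag used in Phase~1 of Theorem~\ref{thm:STRONG:strong_general_bounds}, where the strong-grid distance is also the Chebyshev distance. The difference here is the parity constraint, which forces me to shift one arm by one unit.

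I expect the main obstacle to be making this construction respect the parity class while keeping every pairwise gap at least $|a-b|+1$, especially near the corner of the L and for $B$, which has no diagonal corners when $n$ is even. If the explicit zigzag fails, my fallback is the cleaner route through the $45^\circ$ change of coordinates $(i,j)\mapsto\bigl(\tfrac{i+j}{2},\tfrac{i-j}{2}\bigr)$. Under this map each component becomes an induced diamond-shaped subgraph of a Cartesian grid. I would then look inside it for an isometric maximally cool subgraph, such as a full centipede (Theorem~\ref{thm:CART:cool_centipedes}), whose diameter is $n-1$, and conclude with Lemma~\ref{lem:GENE:isometric_subgraph}. Failing that, I would check small $n$ by computer and guess the pattern from the optimal sequences.
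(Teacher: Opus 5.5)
Your reduction is the paper's own: within a parity class the distance is the Chebyshev distance, each class has diameter $n-1$, and if both classes are maximally cool then Corollary~\ref{cor:DIR:disconnected_max} gives $2n-1$. The gap is that the central claim, that both classes are maximally cool, is false for $n=3$ and $n=5$, and your only special case is $n=2$.

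For $n=3$ the odd class is the $4$-cycle on $(1,2),(2,1),(3,2),(2,3)$, and $\CL(C_4)=2<3$.

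For $n=5$ the odd class has cooling number $4$. A fifth round needs sources $s_1,\dots,s_4$ and a surviving vertex $t$ with $d(s_1,t)=4$. Up to symmetry, $s_1=(1,2)$ and $t\in\{(5,2),(5,4)\}$. Then $s_4$ must be the other vertex of that pair, and $s_3$ is forced to be $(2,5)$. The only remaining candidates for $s_2$ are $(1,4)$ and $(2,5)$, and both lie within distance $1$ of $s_3$.

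Note also that for odd $n$ the odd class contains no corner at all, so your corner target is not even available there. For $n=3,5$ you must use Theorem~\ref{thm:DIR:disconnected_equality} with $k=1$:
\begin{itemize}
\item the even class has cooling number $n$ and can have no source in round $n$;
\item the odd class has cooling number $n-1$, realized by a sequence with a source in its last round (two antipodal vertices of $C_4$ for $n=3$; $(1,2),(1,4),(4,1),(5,4)$ for $n=5$).
\end{itemize}
This gives $n+(n-1)-1+1=2n-1$. The paper simply checks $n\le 5$ by computation. Your claim does hold for $n=2,4$ and all $n\ge 6$.

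The more basic gap is that you never produce the sequences, which are the whole content of the proof. Moreover, the design you sketch fails as stated. Demanding $d(s_a,t)=n-a$ exactly breaks down at a corner target. For $t=(n,n)$, the only vertex of its class at distance $1$ is $(n-1,n-1)$. The three vertices of the class at distance exactly $2$, namely $(n-2,n-2)$, $(n-2,n)$ and $(n,n-2)$, are all adjacent to it, so $s_{n-2}$ and $s_{n-1}$ cannot both be placed.

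The paper uses only the inequality $d(s_a,t)\ge n-a$. It first cools alternate vertices along one side of the grid, all at distance $n-1$ from a target near the opposite side, so they never threaten it. It then works along the remaining boundary toward the target, with a separate explicit sequence for each class when $n$ is odd.

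Your centipede fallback cannot rescue this. After the $45^\circ$ rotation, a class is an isometric subgraph of a Cartesian grid. There, an interior vertex of a geodesic spine can carry an isometric leg only if the spine is straight on one side of it, and two legs off the same side of a straight stretch are too close. Hence isometric full centipedes in a grid have bounded diameter and cannot reach diameter $n-1$ for large $n$.
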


\begin{proof}
The cases $n=2,$ $3,$ $4,$ and $5$ can be checked separately; see Table~\ref{tab:DIR:direct_product_table}. We now consider two cases based on the parity of $n$, where $n\geq 6$.
    
If $n$ is even, then the two components of $P_n\times P_n$ are isomorphic. Call them $G$ and $H$. We have that $\diam(G) = \diam(H) = n-1$. We show that $\CL(G)=\diam(G)+1=n$, and the result follows by Corollary~\ref{cor:DIR:disconnected_max}. 

We start by cooling vertices of the form $(x,1)$ in $G$, in order of increasing $x$. Note that in $G$ all $(x,1)$ have $x$ odd. In particular, we cool the vertex $u_i = (2i-1,1)$ in rounds $1 \leq i \leq \frac{n}{2}$. Note that $d_G(u_i,u_j) = 2|i-j| > |i-j|+1,$ so the $u_i$ satisfy the cooling condition in $G$.

Next, note that all $(y,n)$ in $G$ have $y$ even and that $d_G((y,n),u_i)=n-1$. Hence, they satisfy the cooling condition until round $1+(n-1)=n$. We therefore cool $u_j = (n-2j,n)$ in rounds $\frac{n}{2}+1\leq j \leq n-1$, with $(2,n)$ cooling in round $n$. Thus, $\CL(G)=\CL(H)=n$ and Corollary~\ref{cor:DIR:disconnected_max} yields the even case.

If $n$ is odd, then the components of $P_n\times P_n$ are not isomorphic. Let $G$ be the component containing the four leaves $(1,1)$, $(1,n)$, $(n,1)$, and $(n,n)$, and let $H$ be the other component. Observe that $\diam(G) = \diam(H) = n-1$. We show that $\CL(G)=\CL(H)=n$ and the result follows from Corollary~\ref{cor:DIR:disconnected_max}. 

We first describe a cooling strategy on $G$ that lasts $n$ rounds. Consider the sequence $\big((1,1)$, $(3,1)$, $\ldots$, $(n,1)$, $(n,n)$, $(n-2,n)$, $\ldots$, $(5,n)\big)$. We cool $\frac{n+1}{2}$ sources of the form $(x,1)$, in order of increasing $x$, and then $\frac{n+1}{2}-2$ sources of the form $(y,n)$, in order of decreasing $y$. In the following round, every vertex of $G$ is cooled, so the cooling process on $G$ using the above cooling sequence lasts $\frac{n+1}{2}+\frac{n+1}{2}-2 +1=n$ rounds. That the sources satisfy the cooling condition in $G$ follows from the even case above.   

Finally, we describe a cooling strategy on $H$ which lasts $n$ rounds. Consider the sequence $\big((2,1),(4,1), \ldots , \left(\tfrac{n-1}{2},1\right), (n,4), (n,6), \ldots , \left(n,\tfrac{n-1}{2}\right), (n-3,n)\big).$ We cool $\frac{n-1}{2}$ sources of the form $(x,1)$, in order of increasing $x$, and then $\frac{n-1}{2}-1$ sources of the form $(n,y)$, in order of increasing $y$ but skipping $y=2$. Finally, in round $n-1$, we cool $(n-3,n)$, noting that since $n>5$, the source $(n-3,n)$ does not end the cooling process in round $n-1$, as there is at least one uncooled vertex of the form $(z,n)$. Therefore, the cooling process on $H$ lasts $\frac{n-1}{2}+\frac{n-1}{2}-1+1+1=n$ rounds. As before, that the sources satisfy the cooling condition follows from the even case above. Thus, $\CL(G)=\CL(H)=n$ and the proof of the odd case follows by Corollary~\ref{cor:DIR:disconnected_max}. 
    \end{proof}

We finish by including in Table~4 exact values for the cooling numbers of direct grids, derived by computation.

\begin{table}[htpb!]
       \centering
       \begin{tabular}{|c|c|c|c|c|c|c|c|c|c|}
       \hline
           $\times$ & $P_2$ & $P_3$ & $P_4$ & $P_5$ & $P_6$ & $P_7$ & $P_8$ & $P_9$ & $P_{10}$ \\
           \hline
           $P_2$ & 3 & 4 & 5 & 6 & 7 & 8 & 9 & 10 & 11\\
           \hline
           $P_3$ & ~ & 5 & 6 & 7 & 8 & 9 & 10 & 11 & 12\\
           \hline
           $P_4$ & ~ & ~ & 7 & 8 & 9 & 11 & 12 & 13 & 15\\
           \hline
           $P_5$ & ~ & ~ & ~ & 9 & 10 & 12 & 13 & 14 & 15\\
           \hline
           $P_6$ & ~ & ~ & ~ & ~ & 11 & 13 & 14 & 16 & $\geq 17$\\
           \hline
       \end{tabular}
       \caption{The cooling number of $P_i \times P_j$.}
       \label{tab:DIR:direct_product_table}
   \end{table}

\section{Conclusion and Open Problems}

We studied the behavior of the cooling number under four different types of graph products. 
For the Cartesian, strong, and lexicographic product, we obtained general upper and lower bounds for the cooling number in terms of the cooling number of the factors.  For the direct products, we showed that such an upper bound derived as a function of the cooling number of the factors is impossible. We calculated the cooling number of the products of well-known graph families, such as trees and complete graphs. In particular, we strengthened and generalized several previously existing results on the cooling number of both Cartesian and strong grids. Finally, we also showed that cooling a disconnected graph reduces to optimally cooling each of its components in sequence. 

We finish with open problems. By Corollary~\ref{cor:CART:cool_centipedes}, all trees with no vertices of degree 2 are maximally cool. However, this is not a full characterization of maximally cool trees, as given by Lemma~\ref{lem:CART:cooling_centipedes_with_lobster_claws} and Theorem~\ref{thm:CART:cool_regular_trees}. 

In Theorem \ref{thm:LEX:circ_general}, we give an upper bound on the cooling number of $G\circ H$ in terms of the cooling number of $G$. While this upper bound is shown to be close to tight for general graphs, the difference is much larger for all known examples in which $G$ is a tree. We suspect that this bound can be improved, motivated by Theorem~\ref{thm:LEX:centipede_lex}. In particular, we conjecture that if $T$ is a tree and $H$ is a graph, then $$\CL(T\circ H) \leq \left \lfloor \frac{4}{3}\left( \CL(T)+1\right)\right\rfloor.$$ 

Another question relates to cooling lexicographic products. Let $\mathcal{G}_n$ be the set of all graphs with cooling number equal to $n$. Does there exist a $G \in \mathcal{G}_n$ such that $\CL(G \circ H) = n+k$ for each $k \in \left\{0,1,\ldots,\frac{n}{2}+1\right\}$? 

Analogous to Theorem~\ref{thm:STRONG:strong_lowerbounds_cartesian}, $\CL(G\boxtimes H)$ gives a lower bound on the cooling number of the direct product of $G$ and $H$. It is unclear whether this lower bound is tight or can be improved in general or for particular classes of graphs. 

Finally, it is known that burning is NP-complete on centipedes, as well as on path-forests \cite{Burning_Hard,liu1}. In Sections~3 and 6, we compute the cooling number of these graphs directly. This raises the question of the complexity of cooling. One open problem in this direction is determining the complexity of cooling spiders.

\section{Acknowledgments}

The first author was supported by an NSERC Discovery Grant, and the third author was supported by an NSERC CGRS D award.

\end{document}